\documentclass
[
    a4paper,
    DIV=12,
    abstract=true,
    numbers=noenddot
]
{scrartcl}

\usepackage[bf,normal]{caption}

\usepackage
{
    amsmath,
    amssymb,
    amsthm,
    array,
    authblk,
    bbm,
    bm,
    dsfont, 
    graphicx,
    mathtools,
    nicefrac,
    physics,
    tabularx,
    xcolor,
    subcaption,
    stmaryrd,
    algorithm,
    algpseudocode,
}

\usepackage[shortlabels]{enumitem}

\usepackage[T1]{fontenc}

\usepackage[pdffitwindow=false,
    plainpages=false,
    pdfpagelabels=true,
    pdfpagemode=UseOutlines,
    pdfpagelayout=SinglePage,
    bookmarks=false,
    colorlinks=true,
    hyperfootnotes=false,
    linkcolor=blue,
    urlcolor=blue!30!black,
    citecolor=green!50!black]{hyperref}

\usepackage{cleveref}

\newtheorem{theorem}{Theorem}[section]
\newtheorem{proposition}[theorem]{Proposition}
\newtheorem{lemma}[theorem]{Lemma}
\newtheorem{remark}[theorem]{Remark}
\newtheorem{corollary}[theorem]{Corollary}

\theoremstyle{definition}
\newtheorem{example}[theorem]{Example}

\newtheorem{assumption}[theorem]{Assumption}

\definecolor{darkgreen}{rgb}{0.0, 0.6, 0.6}

\newcommand{\wh}[1]{\widehat{#1}}

\newcommand{\br}[1]{\left\langle#1\right\rangle}
\newcommand{\set}[1]{\left\{#1\right\}}

\renewcommand{\norm}[1]{\left\lVert #1 \right\rVert}
\renewcommand{\abs}[1]{\left| #1 \right|}

\newcommand{\ts}{\hspace*{0.1em}} 

\newcommand\xqed[1]{\leavevmode\unskip\penalty9999 \hbox{}\nobreak\hfill \quad\hbox{#1}}
\newcommand{\exampleSymbol}{\xqed{$\triangle$}}

\newcommand{\E}{\mathbb{E}}
\newcommand{\bE}{\mathbb{E}}

\newcommand{\bN}{\mathbb{N}}

\newcommand{\bR}{\mathbb{R}}
\newcommand{\R}{\mathbb{R}}
\newcommand{\bS}{\mathbb{S}}

\newcommand{\bZ}{\mathbb{Z}}

\newcommand{\Bb}{\mathcal{B}}

\newcommand{\Ff}{\mathcal{F}}
\newcommand{\cF}{\mathcal{F}}

\newcommand{\Kk}{\mathcal{K}}

\newcommand{\Qq}{\mathcal{Q}}

\newcommand{\ini}{\xi^{m}}

\newcommand{\xtm}{X_T^{\xi^m,\mu}}

\newcommand{\xmu}{X_T^{x,\mu}}
\newcommand{\xmuhat}{X_T^{x,\hat{\mu}}}

\newcommand{\xmuhx}{X_T^{\xi^m,\hat{\mu},h}}

\newcommand{\co}{\wh{\bm{C}}_{NM}}

\newcommand{\ko}{\wh{\bm{K}}_{NM}}

\usepackage{pifont}

\DeclareMathOperator{\Var}{Var}
\DeclareMathOperator{\Law}{Law}
\DeclareMathOperator{\mspan}{span}

\allowdisplaybreaks

\DeclareCaptionLabelFormat{period}{#1~#2}
\captionsetup{labelformat=period}

\graphicspath{{figures/}}

\begin{document}

\title{Data-driven approximation of transfer operators for mean-field stochastic differential equations}

\author[1,a]{Eirini Ioannou}
\author[1,2]{Stefan Klus}
\author[1,3,4]{Gon\c{c}alo dos Reis}
\affil[1]{Maxwell Institute for Mathematical Sciences, University of Edinburgh and Heriot--Watt University, Edinburgh, UK}
\affil[2]{Department of Mathematics, Heriot--Watt University, Edinburgh, UK}
\affil[3]{School of Mathematics, University of Edinburgh, Edinburgh, UK}
\affil[4]{Center for Mathematics and Applications (NOVA Math), Caparica, Portugal}
\affil[a]{Corresponding author: E.Ioannou-1@sms.ed.ac.uk}

\date{}

\maketitle

\begin{abstract}
Mean-field stochastic differential equations, also called McKean--Vlasov equations, are the limiting equations of interacting particle systems with fully symmetric interaction potential. Such systems play an important role in a variety of fields ranging from biology and physics to sociology and economics. Global information about the behavior of complex dynamical systems can be obtained by analyzing the eigenvalues and eigenfunctions of associated transfer operators such as the Perron--Frobenius operator and the Koopman operator. In this paper, we extend transfer operator theory to McKean--Vlasov equations and show how extended dynamic mode decomposition and the Galerkin projection methodology can be used to compute finite-dimensional approximations of these operators, which allows us to compute spectral properties and thus to identify slowly evolving spatiotemporal patterns or to detect metastable sets. The results will be illustrated with the aid of several guiding examples and benchmark problems including the Cormier model, the Kuramoto model, and a three-dimensional generalization of the Kuramoto model.
\end{abstract}

\section{Introduction}

Stochastic mean-field dynamics can be understood as the limiting approximation of a random system of interacting particles that are invariant in law with respect to permutations \cite{stochastic_dynamics_2017}. Such systems have recently gained a lot of interest from various research communities in biology (e.g., neuroscience~\cite{app_neuroscience_2011}, cell biology \cite{app_cell_biology_2009}), physics (e.g., ferromagnetism and liquid crystals \cite{app_ferromagnetism_liquid_crystals_2008}, quantum systems \cite{app_quantum_2023}), as well as sociology (e.g., opinion dynamics \cite{app_opinion_dynamics_4_2017, app_opinion_dynamics_2017, app_opinion_dynamics3_2022,app_opinion_dynamics2_2023}, social hearding \cite{app_social_herding_2014}), and economics (e.g., systemic risk \cite{app_systemic_risk_2_2013, app_systemic_risk_2013}, mean-field games \cite{app_economics_2006}), see also \cite{mean_field_applications_2_2005, mean_field_applications_2010}. A random interacting particle system (IPS) has a mean-field limit if it can be described by a system of stochastic differential equations with a fully symmetric interaction term. In the limit of infinitely many interacting particles, the particles become asymptotically independent, and the equation governing the motion of each particle tends to a stochastic differential equation (SDE), called McKean--Vlasov SDE \cite{MFGS_2018}. Gaining information about the behavior of these stochastic mean-field systems using only simulation data is of great interest. Information about the global behavior of dynamical systems can be obtained by analyzing the eigenvalues and corresponding eigenfunctions of associated infinite-dimensional linear transfer operators \cite{Ko31, LaMa94, DJ99, Mezic05, KKS16}. Examples of such operators include the Perron--Frobenius operator, which describes the evolution of probability densities, and the Koopman operator, which describes the evolution of observables of the system. These transfer operators can be defined by integrals involving the transition probability kernel of the underlying system. Over the last years, many different data-driven approaches for estimating finite-dimensional approximations of these operators have been proposed. Some of the most popular methods is extended dynamic mode decomposition (EDMD)~\cite{williams2015data} and its various extensions such as kernel EDMD~\cite{WRK15, KSM19}, generator EDMD~\cite{KNPNCS20}, or deep learning-based methods such as VAMPnets~\cite{MPWN18}.

Our goal is to estimate transfer operators associated with mean-field SDEs as well as their eigenvalues and eigenfunctions. This allows us to gain insights into the global dynamics of these systems and to detect important characteristic properties including invariant distributions, metastable sets, transition rates, and implied timescales. These phenomena are understood in the following way in the context of this paper: an invariant distribution is a density of the system that remains unchanged over time, metastable sets are sets between which a dynamical transition is a rare event, a transition rate is a rate at which dynamical transitions between metastable sets happen, and timescales describe the separation of dynamics that occur in the short and long term. While transfer operators associated with conventional SDEs and numerical methods to learn these operators from data are well-understood, see, e.g., \cite{SS13, NoNu13, KKS16}, there is a need to generalize these approaches to mean-field SDEs. The global behavior of mean-field SDEs has been studied before, for instance by analyzing the stability of invariant distributions using the nonlinear Fokker--Planck equation (also known as the McKean--Vlasov PDE) \cite{soa_stationary_states_1_2001, soa_stationary_states_3_2020, bertoli_2025, bicego2025computation}. More specifically, a methodology for finding invariant distributions of mean-field models is by using Galerkin approximations (as, for example, in~\cite{bicego2025computation}), whilst a technique for analyzing their stability is by finding the Fourier modes of the interaction potential or by using Lions derivatives (e.g., as in \cite{bertoli_2025} and \cite{quentin_2024} respectively). Additionally, there is some work on parameter estimation of SDEs governing mean-field interacting particle systems with the aid of local asymptotic normality, maximum likelihood estimators, martingale estimating functions, and the Goldenshluger--Lepski method \cite{soa_par_estimation_3_2022, soa_par_estimation_4_2022, soa_par_estimation_1_2023, soa_par_estimation_2_2023}.

Transfer operators are typically linear, but the transfer operators associated to McKean--Vlasov equations are nonlinear. One way to circumvent the nonlinearity of the McKean--Vlasov PDE is to replace the law of the process within the PDE by the invariant measure, see, e.g., \cite{soa_par_estimation_3_2022}. We use a different approach that is based on the decoupled mean-field equations, where the law in the equations is decoupled from the process and then consider transfer operators associated with the decoupled SDEs. It is important to note that the decoupled and the standard McKean--Vlasov SDEs output the same solution, provided that they both start from the same initial conditions. The reason for working with the decoupled McKean--Vlasov SDE is to be able to maintain the linearity of transfer operators and also to ensure that the Markov family property holds. Decoupled  McKean--Vlasov SDEs were originally defined in order to be able to analyze the PDEs associated to these SDEs \cite{crisan2017smoothing, decoupled_pdes}.

We approximate transfer operators projected onto a space spanned by a set of basis functions, also called dictionary, using simulation data (stemming from the decoupled McKean--Vlasov SDE). This allows us to then also estimate the eigenvalues and eigenfunctions of these projected operators, which give us information about the underlying dynamics. We would like to point out that numerical results show that even if the simulations of the interacting particle system were used, we would get similar numerical estimates of the eigenvalues and eigenfunctions. However, it is unclear whether this would fail under certain conditions as we do not obtain almost sure convergence of the estimated operator to the desired operator in this case (only $L_2$ convergence), and moreover, the operators are not well-defined for the interacting particle system. The main contributions of this work are as follows:
\begin{enumerate}[leftmargin=3.5ex, itemsep=0ex, topsep=0.5ex, label=\arabic*.]
\item We show that transfer operators associated with mean-field stochastic differential equations are well-defined in the proposed framework.
\item We prove that the data-driven estimate of the projected Koopman operator tends to the exact projected Koopman operator almost surely as the number of data points goes to infinity and the step size goes to zero.
\item Furthermore, we present numerical experiments for a set of benchmark problems that illustrate the accuracy, efficiency, and versatility of our methodology.
\end{enumerate}

The remainder of the paper is structured as follows: Section~\ref{sec:mckean_vlasov_sdes_numerics} introduces the McKean--Vlasov framework. First, we define interacting particle systems, their mean-field limits, and also the decoupled McKean--Vlasov SDE. Furthermore, we show how the decoupled McKean--Vlasov SDEs can be solved numerically. In Section~\ref{sec:operators_decoupled}, we extend transfer operator theory to the mean-field setting. In Section~\ref{sec:convergence_data_limit}, we estimate the matrix representation of the projected transfer operators from data and prove that in the infinite data limit, with the step size going to zero, our method converges to the exact projected operator. Our work provides an explanation as to why and how the EDMD algorithm can be applied to mean-field SDEs. We present numerical results and illustrate that applying EDMD to the decoupled McKean--Vlasov equation allows us to gain important insights into the global behavior of the system in Section~\ref{sec:numerical_results}. Conclusions, open questions, and future work are discussed in Section~\ref{sec:conclusion}.

\section{McKean--Vlasov SDEs and numerical schemes}
\label{sec:mckean_vlasov_sdes_numerics}

In this section, we introduce interacting particle systems and the limiting systems as the number of particles goes to infinity, called McKean--Vlasov SDEs. Afterwards, we define decoupled McKean--Vlasov SDEs, which are motivated by the fact that the Markov family property is not violated for such SDEs. The decoupled McKean--Vlasov SDEs are required in order to be able to define the transfer operators in Section~\ref{sec:operators_decoupled}. Lastly, in this section we explain how artificial data can be generated from the decoupled SDEs. This is necessary as the algorithms used in Section \ref{sec:convergence_data_limit} need data as input in order to estimate transfer operators.

\subsection{Interacting particle systems and McKean--Vlasov SDEs}

In what follows, let $(\Omega, \mathcal{F},\mathbb{F},\mathbb{P})$ be a filtered probability space, where $\Omega$ is the sample space, i.e., the set of all possible outcomes ($\Omega= \mathbb{X} \subseteq \mathbb{R}^d$), $\mathcal{F}$ is the sigma algebra of events and is called the event space, $\mathbb{F}=(\mathcal{F}_t)_{t \geq 0}$ is a filtration of sigma-algebras (for which it holds that $\mathcal{F}_k \subseteq \mathcal{F}_l, \forall k \leq l$), and $\mathbb{P}$ is the probability measure. Additionally, let $(\xi^m)_{m=1,2,...,M}$ be an i.i.d.\ sequence of $\mathbb{R}^d$-valued $\mathcal{F}_0$-measurable random variables and $(W^m)_{m=1,2,...,M}$ independent $\mathbb{F}$-Brownian motion. For more details, see, e.g., \cite{chen2024IMA}.

We work with interacting particle systems in which the interaction between particles is highly symmetric. These systems are called mean-field models, where each particle is represented by an SDE. The family of mean-field interacting particle systems is of the form
\begin{equation} \label{eq:IPS}
\begin{split}
    \mathrm{d}X_t^m &= b(t,X_t^m,\mu_t^M) \ts \mathrm{d}t + \sigma(t,X_t^m, \mu_t^M) \ts \mathrm{d} W_t^m,\\
    \quad X_0^m &= \xi^m, \quad \mu_t^M = \frac{1}{M} \sum_{j=1}^M \delta_{X_t^j},
\end{split}
\end{equation}
for $ m = 1, 2, \dots, M $, where $\delta_y$ is the Dirac delta function at $y$. For each particle, $\xi^m$ denotes the initial distribution. We call $\mu_t^M$ the empirical measure of particles. Each particle has an independent noise. Note that due to the interaction coming from the empirical measure the particles are not independent.

\begin{example}[Ornstein--Uhlenbeck system]\label{example:OU}
Consider the model
\begin{equation*}
    \mathrm{d} X_t^m = a \qty(\frac{1}{M} \sum_{j=1}^M X_t^j - X_t^m) \mathrm{d}t + \sigma \mathrm{d}W_t^m, \quad X_0^m =\xi^m,
\end{equation*}
for $ m = 1,2, \dots, M $. The drift term pushes each particle toward the empirical average. We can think of the limit of this equation as $M \to \infty$ in the following way: Let $\bar{X_t} = \frac{1}{M} \sum_{j=1}^M X_t^j$, which satisfies the SDE $d \bar{X_t} = a \qty(\frac{1}{M} \sum_{j=1}^M X_t^j - \frac{1}{M} \sum_{j=1}^M X_t^j) \mathrm{d}t +  \frac{\sigma}{M} \sum_{j=1}^M d W_t^m$ and hence,
\begin{equation*}
    \bar{X_t}=\frac{1}{M} \sum_{j=1}^M X_0^j + \frac{\sigma}{M} \sum_{j=1}^M W_t^j.
\end{equation*}
Due to the law of large numbers, we have $\bar{X_t} \to \E[\xi]$ as $M \to \infty$. Hence, for $M \to \infty$ the behavior of particle $m$ should be given by
\begin{equation*}
    \mathrm{d}X_t^m = a \qty(\E[\xi] - X_t^m) \ts \mathrm{d}t + \sigma \mathrm{d}W_t^m, \quad X_0^m =\xi^m.
\end{equation*}
That is, as $M \to \infty$ the particles become asymptotically i.i.d.\ and the behavior of each particle is described by an Ornstein--Uhlenbeck process. \exampleSymbol
\end{example}

The McKean--Vlasov limit of the Ornstein--Uhlenbeck system introduced in Example \ref{example:OU} is thus given by
\begin{equation*}
    \mathrm{d} X_t = a(\E[X_t] - X_t) \ts \mathrm{d}t + \mathrm{d} W_t, \quad X_0 = \xi.
\end{equation*}
Intuitively, as $M \to \infty$, the interaction becomes weaker in the sense that the contribution of a given particle is of order $\frac{1}{M}$.

In general, the McKean--Vlasov limit of the IPS associated with \eqref{eq:IPS} is
\begin{equation} \label{eq:MVSDE}
    \mathrm{d}X_t = b(t,X_t,\mu_t) \ts \mathrm{d}t + \sigma(t,X_t,\mu_t) \ts \mathrm{d}W_t, \quad X_t = \xi, \quad \mu_t = \Law(X_t).
\end{equation}
More precisely, each particle in the $M \to \infty$ limit follows the dynamics of this SDE and is independent of the other particles.

\begin{assumption} \label{assumption:b_sigma}
Let $\Qq(\mathbb{R}^d)$ be the space of measures on $(\Omega, \mathcal{F},\mathbb{F})$ and $\Qq_2(\mathbb{R}^d)$ the space of measures $\mu \in \Qq(\mathbb{R}^d)$ such that $\int_{\Omega} \| x \|^2 \, \mathrm{d} \mu(x) < \infty$. Further, let $W_2 \colon \Qq(\mathbb{R}^d) \cross \Qq(\mathbb{R}^d) \rightarrow \mathbb{R}$ be the 2-Wasserstein distance defined by
\begin{equation*}
    W_2(\mu,\nu) = \qty( \inf_{\gamma \in \Gamma(\mu,\nu)} \int_{\Omega} d(x,y)^2 \, \mathrm{d} \gamma(x,y) )^{\frac{1}{2}},
\end{equation*}
where $\Gamma(\mu,\nu)$ is the collection of all measures with marginals $\mu$ and $\nu$. The random variable $\xi$ is in $L^p (\Omega, \cF_0)$ for all $p\geq 1$. The drift and diffusion functions are defined by $b \colon [0,T]\times \bR^d \times \Qq(\bR^d) \to \bR^d$ and $\sigma \colon [0,T]\times \bR^d \times \Qq(\bR^d) \to \bR^{d\times d}$. They satisfy the following conditions: $b$ and $\sigma$ are uniformly Lipschitz continuous in space and measure, i.e., there exists $L_b, L_\sigma\ge0$ such that for all $t \in[0,T]$ and all $x, x'\in \bR^d$ and all $\mu, \mu'\in \Qq_2(\bR^d)$ we have that
\begin{align*}
    \abs{b(t, x, \mu)-b(t, x', \mu')} &\leq L_b \big(\abs{x-x'} + W_2(\mu, \mu') \big), \\
    \abs{\sigma(t, x, \mu)-\sigma(t, x', \mu')} &\leq L_\sigma \big(\abs{x-x'} + W_2(\mu, \mu') \big).
\end{align*}
Moreover $b$ and $\sigma$ are $\nicefrac{1}{2}$-Hölder continuous in time uniformly in $x\in \bR^d$ and $\mu\in \Qq_2(\bR^d)$.
\end{assumption}

Provided that Assumption~\ref{assumption:b_sigma} is satisfied, we have well-posedness, finite moments and time-regularity for the McKean--Vlasov SDE, see~\cite[Theorem 3.2]{chen2024IMA}. We also have finite moments and a unique solution for the IPS under Assumption~\ref{assumption:b_sigma} from \cite[Proposition 2.5]{chen2024IMA}. This result also ensures that the IPS trajectories are close to the McKean--Vlasov trajectories in the $L^2$ sense, and the densities $\mu$ and $\mu^M$ are close in the Wasserstein distance, with bounds dependent on the dimension $d$ and the number of particles of the IPS $M$. With some additional assumptions on $b$ and $\sigma$, we obtain the same results but independent of the dimension $d$ as shown in \cite{poc_scalar_bel_2018}.

\subsection{Decoupled McKean--Vlasov SDEs}

The motivation for decoupled McKean--Vlasov SDEs is due to the fact that the Markov family property does not hold for McKean--Vlasov SDEs. The Markov family property implies that if we solve the SDE from every deterministic initial state, this is enough to determine the behavior of the SDE starting from a random initial state, simply by integrating over the distribution of the initial state. For McKean--Vlasov equations, this property does not hold \cite{MFGS_2018}. Intuitively, this can be explained by the fact that starting from a deterministic state $x$ implies that $\mu_0=\delta_x$, which then evolves and becomes $\mu_t$. However, this $\mu_t$ would be different if the initial state was $\mu_0 \neq \delta_x$. In other words, if we solve the McKean--Vlasov SDE with a random initial state and then condition on $X_0=x$ for some $x \in \mathbb{R}^d$, the resulting distribution is not the same as the law obtained by solving the McKean--Vlasov SDE with deterministic initial state $X_0=x$. The lack of the Markov family property would result in ill-posed transfer operators, as we would not be able to write the transfer operators as integrals. For a more detailed introduction to transfer operators and their infinitesimal generators, see Section \ref{sec:operators_decoupled}. Additionally, the operators would not be linear, hence their eigenvalues and eigenfunctions would not contain useful information. Another issue that occurs when considering McKean--Vlasov SDEs is that the infinitesimal generator of the semi-group of Koopman operators is nonlinear \cite{crisan2017smoothing}. The corresponding PDEs involve derivatives with respect to the measure as shown in \cite{lions_2010}.

In order to circumvent the problems described above, we introduce decoupled McKean--Vlasov SDEs \cite{buckdahn2014meanfieldpde,crisan2017smoothing}. The decoupled McKean--Vlasov SDE is using the $\mu_t$ density of the McKean--Vlasov SDE. However, now this density $\mu_t$ is ``independent'' of the decoupled McKean--Vlasov SDE (and the reason it is called ``decoupled'' is because the law of the coefficients is not coupled with the process that follows the SDE). More precisely,
\begin{align}\label{eq:decoupledMVSDE}
\begin{split}
    \mathrm{d}X_t^{x,\mu} &= b(t, X_t^{x,\mu},\mu_t) \ts \mathrm{d}t + \sigma(t, X_t^{x,\mu},\mu_t) \ts \mathrm{d} W_t, \\
    X_0^{x,\mu} &= x, \quad \mu = \Law(X_t).
\end{split}
\end{align}
It is important to note that $\mu_t$ is not the law of $X_t^{x,\mu}$, but the law of $X_t$ from \eqref{eq:MVSDE}, i.e., $\Law(X_t^{x,\mu}) \neq \mu = \Law(X_t)$. This means that the SDE \eqref{eq:decoupledMVSDE} is a standard SDE as its coefficients do not depend on its law. For this SDE the flow property as well as the Markov family property hold. The flow property implies that if the process were to start from time $0$ and reach time $r$, it would be the same identical process if it were to start from the process at time $s$ and move for $r-s$ amount of time. The decoupled SDE is related to the McKean--Vlasov SDE in the following way:
\begin{equation*}
    X_t^{\xi,\mu}=X_t, \quad \text{where} \quad X_0^{\xi,\mu}=\xi, X_0=\xi.
\end{equation*}
Let the transition probability density of $X_t^{x,\mu}$ be $p(\mu_0,0,t,x,z)$, which is the probability of the decoupled process to reach $z$ at time $t$ given that it started from $x$ at time $0$. The McKean--Vlasov SDE that is attached to this decoupled SDE has initial distribution $\mu_0$. Let the probability density function of the McKean--Vlasov process $X_t$ be $q(\mu_0,0,t,z)$, which is the probability that $X_t=z$  given that initial distribution at time $t=0$ was $\mu_0$. Then, we have
\begin{equation} \label{eq:densities_MVSDE}
    q(\mu_0,0,t,z) = \int_{\mathbb{R}^d} p(\mu_0,0,t,x,z) \ts \mu_0(\mathrm{d}x).
\end{equation}

We can obtain finite moments, unique solutions and the aforementioned connection between the decoupled and the McKean--Vlasov SDE from \cite[Chapter 3]{ZhangBook_2017}. It can also be proven that $X^{x,\hat{\mu}}$ is close to $X^{x,\mu}$ in the $L^2$ sense, dependent on the Wasserstein distance between $\mu$ and $\hat{\mu}$ using \cite[Theorem 3.2.4]{ZhangBook_2017}.

\subsection{Numerical scheme for the decoupled McKean--Vlasov SDE}
\label{subsec:numerical_results}

In order to simulate the decoupled McKean--Vlasov SDE \eqref{eq:decoupledMVSDE}, we would need access to $\mu$ since the Euler--Maruyama for the decoupled McKean--Vlasov numerical scheme is
\begin{equation}
\label{eq:non-implementable-EM}
    X^{x,\mu,h}_{t_{k+1}} =  X^{x,\mu,h}_{t_{k}} + b(t_k,X^{x,\mu,h}_{t_k},\mu_{t_k}) \ts h + \sigma(t_k,X^{x,\mu,h}_{t_k},\mu_{t_k}) \ts \Delta W_{k+1}, \quad X_{t_0} = x
\end{equation}
where $\Delta W_{k+1} := W_{t_{k+1}}-W_{t_k}$.
There is typically no way to access the exact $\mu$, hence we aim to estimate it by applying the Euler--Maruyama scheme to the IPS. Now if $X^{m,\tilde{h}}$ is the numerical solution of the IPS with $\tilde{M}$ particles and time step $\tilde{h}$, then $\mu_t$ can be estimated via
\begin{equation}\label{eq:mu_hat}
    \hat{\mu}_t=\mu_t^{\tilde{M},\tilde{h}}=\frac{1}{\tilde{M}} \sum_{m=1}^{\tilde{M}} \delta_{X_t^{m,\tilde{h}}}.
\end{equation}
A technical remark regarding $\mu_t^{\tilde{M},\tilde{h}}$ is that it can be understood as being simulated in a different probability space $(\tilde{\Omega}, \tilde{\cF}, \tilde{\mathbb{P}})$ and the scheme \eqref{eq:implementable-EM} runs on the reference probability space $(\Omega,\mathcal{F},\mathbb{P})$ as a $\tilde{\mathbb{P}}$-a.s. collection of random variables \cite{dosReisSmithTankov2023ISofrMVSDE}. We remark that $\hat{\mu}$ could be deterministic or random (in the sense mentioned above) dependent on how the estimation of $\mu$ is obtained. In the case of $\hat{\mu}$ being random, for ease of notation, we use the same expectation for both probability spaces.

Once $\hat{\mu}$ is computed, we generate data using the Euler--Maruyama scheme
\begin{equation}\label{eq:implementable-EM}
    X^{x,\hat{\mu},h}_{t_{k+1}} =  X^{x,\hat{\mu},h}_{t_{k}} + b(t_k,X^{x,\hat{\mu},h}_{t_k},\hat{\mu}_{t_k}) h + \sigma(t_k,X^{x,\hat{\mu},h}_{t_k},\hat{\mu}_{t_k}) \Delta W_{k+1}, \quad X_{t_0}=x.
\end{equation}

\begin{theorem} \label{theorem:decoupled_num_scheme}
Under Assumption~\ref{assumption:b_sigma} for $\mu$ as in \eqref{eq:MVSDE}, $X^{x,\mu,h}$ defined in \eqref{eq:non-implementable-EM}, $X^{x,\hat{\mu},h}$ defined in \eqref{eq:implementable-EM}, and $X^{x,\mu}$ as in \eqref{eq:decoupledMVSDE}, it holds that
\begin{equation*}
    \E\left[\abs{X_T^{x,\mu,h}-X_T^{x,\mu}}^2\right] \leq C_1 h, \quad \E\left[\abs{X_T^{x,\hat{\mu},h}-X_T^{x,\mu,h}}^2\right] \leq C_2 \sup_k W_2(\mu_{t_k},\hat{\mu}_{t_k})^2,
\end{equation*}
for some non-negative constants $C_1$, $C_2$. Additionally, for  $\mu^{\tilde{M}}_t = \frac{1}{\tilde{M}} \sum_{m=1}^{\tilde{M}} \delta_{X_t^m}$, where $X_t^m$ denotes trajectories of the interacting particle system \eqref{eq:IPS}, it holds that
\begin{equation*}
    \sup_{t\in [0,T]} \E\big[W_2(\mu_t, \mu^{\tilde{M}}_t)^2\big] \leq C_3
    \begin{Bmatrix}
        \tilde{M}^{-\nicefrac{1}{2}}, & \text{if } d<4 \\
        \tilde{M}^{-\nicefrac{1}{2}} \log \tilde{M}, & \text{if } d=4 \\
        \tilde{M}^{\frac{-2}{d}}, & \text{if } d>4
    \end{Bmatrix}
    := g(\tilde{M}),
\end{equation*}
and for $\hat{\mu}:= \mu^{\tilde{M},\tilde{h}}$ defined as in \eqref{eq:mu_hat}, it holds that
\begin{equation*}
    \sup_{t\in [0,T]}
    \bE\Big[  W_2(\mu^{\tilde{M}}_t, \mu_t^{\tilde{M},\tilde{h}})^2 \Big]
    \leq C_4 \tilde{h}
\end{equation*}
for some non-negative constants $C_3$ and $C_4$ independent of $\tilde{M}$. Hence by combining the above results, we have
\begin{equation*}
    \sup_{t\in [0,T]} \E\qty[  W_2(\mu_t, \hat{\mu}_t)^2 ] \leq g(\tilde{M})+C_4 \tilde{h}.
\end{equation*}

\begin{proof}
The proof can be found in Appendix \ref{appendix:num_scheme_results}.
\end{proof}
\end{theorem}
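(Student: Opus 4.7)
The plan is to prove the four displayed bounds separately and combine them via the triangle inequality for $W_2$ at the end. I treat the first two bounds as classical Euler--Maruyama arguments for a standard (non McKean--Vlasov) SDE in which the measure flow is regarded as an exogenous input, and the last two bounds as a propagation-of-chaos step followed by a particle-level strong error analysis.

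For the first bound, I view \eqref{eq:decoupledMVSDE} as an SDE with time-dependent coefficients $\tilde b(t,x):=b(t,x,\mu_t)$ and $\tilde\sigma(t,x):=\sigma(t,x,\mu_t)$. Under Assumption~\ref{assumption:b_sigma}, $\tilde b,\tilde\sigma$ are uniformly Lipschitz in $x$, and are $\nicefrac12$-Hölder in $t$ provided $t\mapsto \mu_t$ is $\nicefrac12$-Hölder in $W_2$, which follows from the time regularity of the McKean--Vlasov solution quoted after Assumption~\ref{assumption:b_sigma}. The classical strong convergence theorem for Euler--Maruyama then yields $C_1 h$. For the second bound, I set $D_k := X_{t_k}^{x,\hat\mu,h} - X_{t_k}^{x,\mu,h}$, use that $\hat\mu$ and $\mu$ share the same Brownian path in \eqref{eq:implementable-EM} and \eqref{eq:non-implementable-EM}, and expand $|D_{k+1}|^2$. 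The cross term with the Brownian increment vanishes in expectation (conditioning on $\cF_{t_k}$ and using that $\hat\mu$ is independent of the reference Brownian filtration), and the Lipschitz bounds in Assumption~\ref{assumption:b_sigma} give
\begin{equation*}
    \E|D_{k+1}|^2 \leq (1+Ch)\,\E|D_k|^2 + Ch\, W_2(\mu_{t_k},\hat\mu_{t_k})^2,
\end{equation*}
after which a discrete Grönwall inequality over $k=0,\dots,T/h$ yields the bound with $C_2$ depending on $T$ and the Lipschitz constants.

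For the third bound I follow the standard synchronous coupling argument for propagation of chaos. I introduce independent copies $\bar X^m$ of the McKean--Vlasov solution driven by the same Brownian motions $W^m$ and starting from the same $\xi^m$ as the IPS particles, and let $\bar\mu_t^{\tilde M} := \tfrac{1}{\tilde M}\sum_m \delta_{\bar X_t^m}$ be their empirical measure. Writing $X_t^m - \bar X_t^m$ as a stochastic integral and applying Itô's isometry together with the Lipschitz assumption gives
\begin{equation*}
    \E|X_t^m - \bar X_t^m|^2 \leq C\int_0^t \E\big[|X_s^m - \bar X_s^m|^2 + W_2(\mu_s^{\tilde M},\mu_s)^2\big]\,\mathrm{d}s,
\end{equation*}
and splitting $W_2(\mu_s^{\tilde M},\mu_s)^2 \leq 2 W_2(\mu_s^{\tilde M},\bar\mu_s^{\tilde M})^2 + 2 W_2(\bar\mu_s^{\tilde M},\mu_s)^2$, controlling the first term by $\tfrac{1}{\tilde M}\sum_m|X_s^m-\bar X_s^m|^2$ via the obvious coupling, and applying Grönwall reduces everything to bounding $\E[W_2(\bar\mu_s^{\tilde M},\mu_s)^2]$. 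This last quantity is handled by the Fournier--Guillin rates for empirical measures of i.i.d.\ samples, yielding precisely the dimension-dependent function $g(\tilde M)$.

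For the fourth bound I apply the standard strong Euler--Maruyama error analysis directly to the IPS \eqref{eq:IPS}: since each particle trajectory $X_t^m$ and its EM approximation $X_t^{m,\tilde h}$ satisfy Lipschitz-coefficient SDE/EM schemes (with a measure argument that is itself empirical over the same particles), a coupled Grönwall argument over the whole particle system gives $\E|X_t^m - X_t^{m,\tilde h}|^2 \leq C_4 \tilde h$ uniformly in $m$ and $t\leq T$. Using the synchronous coupling between $\mu_t^{\tilde M}$ and $\mu_t^{\tilde M,\tilde h}$ then gives $\E[W_2(\mu_t^{\tilde M},\mu_t^{\tilde M,\tilde h})^2] \leq \tfrac{1}{\tilde M}\sum_m \E|X_t^m - X_t^{m,\tilde h}|^2 \leq C_4 \tilde h$. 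The combined estimate follows from the triangle inequality $W_2(\mu_t,\hat\mu_t)^2 \leq 2 W_2(\mu_t,\mu_t^{\tilde M})^2 + 2 W_2(\mu_t^{\tilde M},\mu_t^{\tilde M,\tilde h})^2$ and re-absorbing constants. The main technical obstacle is the propagation-of-chaos step: ensuring the Grönwall constants remain independent of $\tilde M$ requires exploiting the symmetry of the IPS so that $\E|X_t^m - \bar X_t^m|^2$ is the same for every $m$, which lets the $\tfrac{1}{\tilde M}\sum_m$ term be re-absorbed into $\E|X_s^1 - \bar X_s^1|^2$ before applying Grönwall; the rest of the argument is largely bookkeeping.
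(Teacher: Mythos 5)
Your proposal is correct and follows essentially the same route as the paper: the first bound via the $\nicefrac{1}{2}$-H\"older time-regularity of $(t,x)\mapsto b(t,x,\mu_t)$ and $(t,x)\mapsto\sigma(t,x,\mu_t)$ combined with the classical strong Euler--Maruyama theorem, the second via a one-step expansion of the squared difference of the two schemes followed by a discrete Gr\"onwall recursion, and the final estimate via the triangle inequality for $W_2$. The only difference is that for the propagation-of-chaos bound $g(\tilde M)$ and the IPS discretization bound $C_4\tilde h$ the paper simply cites Proposition 2.5 and Theorem 2.9 of \cite{chen2024IMA}, whereas you sketch the underlying synchronous-coupling/Fournier--Guillin argument and the particle-level strong Euler analysis; these are precisely the standard proofs of the cited results, so nothing is gained or lost.
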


\section{Transfer operators for mean-field models}
\label{sec:operators_decoupled}

We now define transfer operators as well as their generators for decoupled McKean--Vlasov equations. Since these operators are infinite-dimensional objects, in order to be computed numerically, they first need to be projected onto finite-dimensional function spaces.

\subsection{Koopman operator}

Let the measure space be given by $(\mathbb{X}, \Bb, \mu_0)$, where $\mathbb{X}$ is a set on which the trajectories exist ($\mathbb{X} \subseteq \mathbb{R}^d$), $\Bb$ is a sigma-algebra on $\mathbb{X}$, and $\mu_0$ is a measure on $(\mathbb{X},\Bb)$. We would like to investigate the expected values of observables of the underlying dynamical system at lag time $T > 0$. Assume that the observable $f$ is contained in $L^{\infty}(\mathbb{X}, \Bb, \mu_0) = L^{\infty}(\mathbb{X})$, i.e., $f$ is an essentially bounded measurable function. The supremum norm is defined as $\|f\|_\infty = \operatorname{ess \  sup}_{x\in\mathbb{X}}\abs{f(x)}$, and for $f \in L^{\infty}(\mathbb{X})$ it holds that there exists a constant $C$ such that $\|f\|_\infty < C$. We define the Koopman operator with respect to the lag time $T>0$ for the decoupled process $X_T^{x,\mu}$ defined in \eqref{eq:decoupledMVSDE} as $\mathcal{K}_T \colon L^{\infty}(\mathbb{X}) \to L^{\infty}(\mathbb{X})$, with
\begin{equation} \label{eq:Koopman operator}
\begin{split}
    \Kk_T f(x)
    &
    :=\E[f(X_T^{x,\mu}) \mid X_0^{x,\mu}=x] \\
    &
    = \int_{\mathbb{X}} p(\mu_0,0,T,x,y) \ts f(y) \ts \mu_0(\mathrm{d}y),
\end{split}
\end{equation}
for $x\in\mathbb{X}$, with $p$ as in \eqref{eq:densities_MVSDE}. The Koopman operator $ \mathcal{K}_T$ is a linear operator defined on $L^{\infty}(\mathbb{X})$. For any constants $\alpha,\beta$ and any $f,g\in L^{\infty}(\mathbb{X})$ it holds that $\mathcal{K}_T(\alpha \ts f + \beta \ts g)(\cdot) = \alpha \mathcal{K}_T(f)(\cdot) + \beta \ts \mathcal{K}_T(g)(\cdot)$. This property relies only on the linearity of the expectation operator and not on the underlying stochastic process being used. Further, since $f$ is bounded, we also have $\|\Kk_T f\|_\infty\leq \|f\|_\infty$ and $\Kk_T$ is thus a bounded operator in this space.

\subsection{Perron--Frobenius operator}

The discussion here is inspired by \cite[Section 2.1]{KKS16}, but we now extend this to the decoupled mean-field case. Let the measure space $(\mathbb{X}, \Bb, \mu_0)$ as in the previous subsection. We are interested in how the SDE affects the distributions over $L^1(\mathbb{X}, \Bb, \mu_0) = L^1(\mathbb{X})$. Let $f \in L^1(\mathbb{X})$, i.e., $ \norm{f}_{L^1}=1$. For $f \in L^1(\mathbb{X})$ with $f \geq 0$ being the density of an $\mathbb{X}$-valued random variable $x \sim f$, we want to characterize the distribution of $X_T^{x,\mu}$. Suppose there exists $g$ such that $X_T^{x,\mu} \sim g$, then the mapping $f \mapsto g$ can be linearly extended to a linear operator $\mathcal{P}_T \colon L^1(\mathbb{X}) \to L^1(\mathbb{X})$. Using the transition density function $p$ as in \eqref{eq:densities_MVSDE}, we define the Perron--Frobenius operator to be
\begin{equation*}
    \mathcal{P}_T f(y) = \int_\mathbb{X}  \ts p(\mu_0,0,T,x,y) f(x) \ts \mu_0(\mathrm{d}x).
\end{equation*}

It can be proven that the Perron--Frobenius is a Markov operator. A Markov operator is an operator that preserves mass (i.e., sends the constant function of value 1 to the constant function with value 1) and preserves non-negativity \cite{operators}. To prove that the Perron--Frobenius is Markov operator, it needs to be proven that if $f\in L^1$ then $\mathcal{P}_T f \in L^1$ and that $f \geq 0$ implies $\mathcal{P}_T f \geq 0$. Suppose we have that $f \geq 0$ and since $p$ is a probability density kernel, we have $p(\mu_0,0,T,x,y) \geq 0$ for all $x,y \in \mathbb{X}$. Thus,
\begin{equation*}
    \mathcal{P}_T f(y) = \int_{\mathbb{X}}  \ts p(\mu_0,0,T,x,y) f(x) \ts \mu_0(\mathrm{d}x) \geq 0.
\end{equation*}
Now using that $p$ is a probability density kernel and hence $\int_{\mathbb{X}} p(\mu_0,0,T,x,y) \, \mu_0(dy)=1$, and also using that $ \norm{f}_{L_1}=\int_{\mathbb{X}} \abs{f(x)} \, \mu_0(\mathrm{d}x) = 1$, we have
\begin{align*}
    \int_{\mathbb{X}} \abs{\mathcal{P}_T f(y)} \ts \mu_0(\mathrm{d}y) &=\int_{\mathbb{X}} \int_{\mathbb{X}}  \ts p(\mu_0,0,T,x,y) \abs{f(x)} \ts \mu_0(\mathrm{d}x) \, \mu_0(\mathrm{d}y)
    \\ &= \int_{\mathbb{X}} \qty (\int_{\mathbb{X}} p(\mu_0,0,T,x,y) \ts \mu_0(\mathrm{d}y)) \abs{f(x)} \, \mu_0(\mathrm{d}x)
    = \int_{\mathbb{X}} \abs{f(x)}  \, \mu_0(\mathrm{d}x)
    =1.
\end{align*}
Invariant densities of the SDE are $f \in L^1(\mathbb{X})$ such that $\mathcal{P}_t f=f$ for all $t\geq 0$. Eigenfunctions associated with eigenvalues close to one correspond to the slowly evolving transients of the system and yield information about metastable sets (see, e.g., \cite{SS13, KKS16}).

\begin{lemma}\label{lemma:duality}
Defining the duality pairing $\langle \cdot, \cdot \rangle \colon L^1 \times L^\infty \to \mathbb{R}$ to be
\begin{equation*}
    \langle f, g \rangle = \int_\mathbb{X} f(x) \ts g(x) \ts \mu_0(\mathrm{d}x),
\end{equation*}
it holds that $\langle \mathcal{P}_T f, g \rangle = \langle f, \mathcal{K}_T g \rangle$.
\begin{proof}
We have
\begin{align*}
    \langle \mathcal{P}_T f,g \rangle &= \int_{\mathbb{X}} \mathcal{P}_T f(x) g(x) \, \mu_0(dx)
    \\ & = \int_{\mathbb{X}} \qty(\int_{\mathbb{X}} f(y) p(\mu_0,0,T,y,x) \,\mu_0(dy) ) g(x) \, \mu_0(dx)
    \\ & = \int_{\mathbb{X}} \int_{\mathbb{X}} f(y) p(\mu_0,0,T,y,x)   g(x) \, \mu_0(dy) \, \mu_0(dx)
    \\ & = \int_{\mathbb{X}} \int_{\mathbb{X}} f(x) p(\mu_0,0,T,x,y)   g(y) \,\mu_0(dx) \, \mu_0(dy)
    \\&= \int_{\mathbb{X}} f(x) \qty(\int_{\mathbb{X}} p(\mu_0,0,T,x,y)   g(y) \, \mu_0(dy)) \, \mu_0(dx)
    \\&=\int_{\mathbb{X}} f(x) \ts \mathcal{K}_Tg(x) \, \mu_0(dx)
    \\&=\langle f, \mathcal{K}_T g\rangle. \qedhere
\end{align*}
\end{proof}
\end{lemma}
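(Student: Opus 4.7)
My plan follows the standard duality argument consisting of three steps. First, I would expand the left-hand side using the definitions of the pairing and of $\mathcal{P}_T$, producing an iterated integral over $\mathbb{X}\times\mathbb{X}$ against the transition density $p(\mu_0,0,T,\cdot,\cdot)$. Second, I would apply Fubini's theorem to swap the order of integration. Third, I would identify the resulting inner integral as $\mathcal{K}_T g$ evaluated at the outer variable, using \eqref{eq:Koopman operator}, so that the whole expression collapses to $\langle f, \mathcal{K}_T g\rangle$.

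To justify Fubini I would verify absolute integrability of the joint integrand. Since $g\in L^\infty(\mathbb{X})$, $f\in L^1(\mathbb{X})$, and $p(\mu_0,0,T,x,\cdot)$ is a probability density on $\mathbb{X}$, the iterated integral of $|f(x)\,p(\mu_0,0,T,x,y)\,g(y)|$ is bounded above by $\|g\|_\infty\|f\|_{L^1}<\infty$, so the interchange is valid. A harmless relabelling of the dummy variables may be needed depending on which form of the kernel is used when writing out $\mathcal{P}_T f$; this is cosmetic, since $p$ appears with the same roles of starting point and endpoint in both $\mathcal{P}_T$ and $\mathcal{K}_T$.

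No substantive obstacle is expected: the identity is the familiar adjoint relationship between a Markov transition operator and its predual, transported verbatim to the decoupled mean-field setting. What makes this extension work here is precisely that the decoupled SDE \eqref{eq:decoupledMVSDE} admits a bona fide transition density $p(\mu_0,0,T,x,y)$, which was the motivation for passing from the McKean--Vlasov SDE to its decoupled counterpart in the first place. The measure $\mu$ is frozen to the initial distribution $\mu_0$ inside the kernel and plays no dynamical role in the computation, so no additional care beyond classical Fubini is required.
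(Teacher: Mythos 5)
Your proposal is correct and follows essentially the same route as the paper's proof: expand $\langle \mathcal{P}_T f, g\rangle$ into an iterated integral against the transition density, interchange the order of integration, relabel the dummy variables, and recognize the inner integral as $\mathcal{K}_T g$ via \eqref{eq:Koopman operator}. Your explicit verification of the Fubini hypothesis (bounding the absolute integrand by $\norm{g}_\infty \norm{f}_{L^1}$) is a small addition the paper leaves implicit, but the argument is otherwise identical.
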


\subsection{Infinitesimal generators of decoupled McKean--Vlasov SDEs}

Using the decoupled SDE, the approach to the Koopman operator is close to that described in \cite{KD24}. The generator of the Koopman operator follows naturally from the backward Kolmogorov equation (or the Feynman--Kac formula \cite[\textsection 5.1]{ZhangBook_2017}), while the generator of the Perron--Frobenius follows from the Fokker--Planck equation \cite[\textsection 2]{greg_book_2014}).

The infinitesimal generator of the semigroup of Koopman operators associated with the decoupled equation \eqref{eq:decoupledMVSDE} is the differential operator $\mathcal{L}$, which for some smooth map $U \colon (0,T] \cross \bR^d \to \bR$, where $(\mu_t)_t$ is understood as a fixed parameter, is defined by
\begin{equation*}
    \mathcal{L} U(t,x) = \sum_{i=1}^d b_i(t,x,\mu_t) \frac{\partial U}{ \partial x_i} + \frac{1}{2} \sum_{i,j=1}^d [\sigma(t,x,\mu_t) \sigma^\top(t,x,\mu_t)]_{i,j} \frac{\partial ^2 U}{\partial x_i \partial x_j}.
\end{equation*}
Similarly, the infinitesimal generator of the semigroup of Perron--Frobenius operators associated with the decoupled equation \eqref{eq:decoupledMVSDE} is the differential operator $\mathcal{L}^*$ defined by
\begin{equation*}
    \mathcal{L}^* U(t,x) = - \sum_{i=1}^d  \frac{\partial \qty(b_i(t,x,\mu_t) U)}{ \partial x_i} + \frac{1}{2} \sum_{i,j=1}^d  \frac{\partial ^2 \qty([\sigma(t,x,\mu_t) \sigma^\top(t,x,\mu_t)]_{i,j} U)}{\partial x_i \partial x_j}.
\end{equation*}

\subsection{Galerkin projections}

In what follows, we consider finite-dimensional approximations of the transfer operators introduced above. This subsection follows the presentation of \cite{llamazares2024data}. We note that both the Koopman and the Perron--Frobenius operators can be extended or restricted to be defined on $L^2(\mathbb{X}, \mu_0)$. Hence, from now on we assume that they are both defined on this space.

Let $\Ff := L^2(\mathbb{X}, \mu_0)$ and define the inner product on this space as;
\[ \langle f, g\rangle_{\Ff} = \int_{\mathbb{X}} f(x) \ts g(x) \, \mu_0(dx). \]

In general, $\Kk_T$ is infinite-dimensional and impossible to work with numerically. Our goal is thus to compute an approximation by projecting the operator onto a finite-dimensional subspace $\Ff_N$ of $\Ff$. This approach is called a Galerkin projection. Given a set of $N$ linearly independent functions $\Psi = \set{\psi_1, \dots, \psi_N}$, called basis functions, define
\begin{equation} \label{N_basis}
    \Ff_N := \mspan(\Psi) = \mspan\left\{\psi_1, \dots, \psi_N\right\} \subset \Ff.
\end{equation}
We denote the projection onto $\Ff_N$ by $\operatorname{Proj}_{\cF_N}$. Since $\Kk_T$ restricted to $\cF_N$, i.e., $\Kk_T|_{\cF_N}$, does not map necessarily onto $\cF_N$, we need to project it back onto $\Ff _N$. We thus define the projection of the Koopman operator $\Kk_T$ onto $\Ff_N$ by
\begin{equation} \label{eq:aux:operator-koopman-projected-onto-F_N}
    \Kk _N = \operatorname{Proj}_{\Ff_N} \mathcal{K}\big|_{\Ff_N}=\operatorname{Proj}_{\Ff_N} \Big(\mathcal{K}\big|_{\Ff_N} \Big): \Ff_N \to \Ff_N,
\end{equation}
and use the projection $\Kk_N$ as an approximation of $\Kk_T$. It is clear that
\[ \left\langle\mathcal{K}_T \psi_i, \psi_j\right\rangle_{\cF}
    =\left\langle\mathcal{K}_N \psi_i, \psi_j\right\rangle_{\cF}\]
holds based on the definition of the projected operator.

We denote operators using curly letters, and their associated matrix representations using bold letters. In essence, as described in \cite[\textsection 2.3]{{llamazares2024data}}, the finite-dimensional approximation $\mathcal{K}_N \colon \mathcal{F}_N \to \mathcal{F}_N$ is defined as
\begin{equation} \label{eq:GalerkinProjectionKoopman}
    \left(\boldsymbol{K}_N\right)^{\top} = \boldsymbol{C}_N \boldsymbol{G}_N^{-1},
\end{equation}

where for all $i,j$
\begin{equation*}
    \big[\boldsymbol{C}_N \big]_{i j}
    := \left\langle\mathcal{K}_T \psi_i, \psi_j\right\rangle_{\cF}
    \quad\text{and}\quad
    \big[\boldsymbol{G}_N\big]_{i j}:=\left\langle\psi_i, \psi_j\right\rangle_{\cF}
\end{equation*}
are the structure matrix and the Gram matrix, respectively.

The matrix representation of the projected operator $\Kk_N \colon \cF_N\to\cF_N$ is $\boldsymbol{K}_N$, and is obtained in the following way. Let $f \in \cF_N$, $f = \sum_{i=1}^N c_i \psi_i$ for some $c_i \in \mathbb{R}$, then
\begin{equation*}
    \mathcal{K}_N f = \mathcal{K}_N \sum_{i=1}^N c_i \psi_i = \sum_{i}^N \big(\boldsymbol{K}_N c\big)^{\top}_{i} \psi_i.
\end{equation*}
The Gram matrix and structure matrix for the decoupled McKean--Vlasov case are
\begin{equation} \label{eq:GNmatrix}
\begin{split}
    \big[\bm{G}_N\big]_{ij}
    = \br{\psi_i, \psi_j}_{\Ff}
    &= \int_{\mathbb{X}} \psi_i(z) \ts \psi_j(z) \ts \mu_0(\mathrm{d}z) \\
    &= \E_{x \sim \mu_0}[\psi_i(x) \ts \psi_j(x)]=\E [\psi_i(\xi) \psi_j(\xi)],
\end{split}
\end{equation}
and
\begin{equation} \label{eq:CNmatrix}
\begin{split}
     [\bm{C}_N]_{ij}
     =\br{\Kk_T \psi_i,\psi_j}_{\Ff }
     &= \int_{\mathbb{X}} \Kk_T \psi_i(z) \ts \psi_j(z) \ts \mu_0(\mathrm{d}z)
     \\
     &= \E_{x \sim \mu_0}\big[\Kk_T \psi_i(x) \ts \psi_j(x)\big]
     \\
     &= \E_{x \sim \mu_0} \left[\E\big[\psi_i(X_T^{x,\mu}) \mid X_0^{x,\mu} = x\big]\psi_j(x)\right]
     \\
     &=\E\left[\psi_i( X_T^{\xi,\mu} ) \ts \psi_j(\xi)\right],
\end{split}
\end{equation}
respectively. The aforementioned integrals cannot be explicitly defined and hence they will be estimated using Monte Carlo approximations with artificially created data. To ensure that the above quantities are well-defined, we assume that the following properties are satisfied.

\begin{assumption} \label{assumption:for_psi_kpsi}
We assume that
\begin{enumerate}[a)]  \setlength{\itemsep}{0ex}
\item the basis functions $\Psi=\set{\psi_1,\dots,\psi_N}$ are linearly independent, \label{a1}
\item the functions $\{\psi_n, \Kk_T \psi_n\}_{n=1}^N $ are continuous $\mu_0 $ almost everywhere. \label{continuous}
\end{enumerate}
\end{assumption}

The first assumption is to ensure that the Gram matrix $\mathbf{G}_N$ is invertible. The second assumption is required so that the pointwise evaluation in the Monte Carlo approximations \eqref{eq:GN_int} and \eqref{eq:CN_int} is well-defined, see Assumption 1 in \cite{llamazares2024data}.
We remark that for $\Kk_T \psi_n$ to be continuous, it is implied that $b$, $\sigma$ need to satisfy certain assumptions of regularity, and the random variable $\xi$ needs to be not ill-behaved.

\begin{assumption}[Bounds and regularity of the basis maps]
\label{assumption:psi_bounded_Lipschitz}
We assume the following:
\begin{enumerate}[a)]
\item \label{assumption:psi_bounded}
The $\psi$ maps are bounded:
\begin{itemize}
\item There exists $\gamma$ such that $\mu_0 $ almost everywhere $ \abs{\psi_n(x)} < \gamma$ for all $n=1,\dots,N$ and all $x \in \mathbb{X}$.
\item There exists $\gamma _N$ such that
$\abs{(\psi_1(x),\dots, \psi_N(x))}^2< \gamma_N \text{ for all } x \in \mathbb{X}$
(take for example $\gamma_N = 4 \gamma^2 N$).
\end{itemize}
\item \label{assumption:psi_lip}
The $\psi$ maps are (uniformly) Lipschitz continuous:
\begin{itemize}
    \item The basis functions $\psi_i$ are Lipschitz continuous with parameter $\theta_i$.
\end{itemize}
Let $\theta := \sup_{i} \theta_i$.
\end{enumerate}
\end{assumption}

This initial result establishes that the Galerkin projection of the Koopman operator is well-defined.
\begin{theorem}
\label{theo:BasicResultsOn-CN-GN-KN}

Let Assumption~\ref{assumption:for_psi_kpsi} hold, then the matrix $\boldsymbol{G}_N$ is a Gram matrix (symmetric and positive definite), and $\boldsymbol{G}_N^{-1}$ exists and is well defined.
If additionally Assumption \ref{assumption:psi_bounded_Lipschitz}\ref{assumption:psi_bounded} holds, then $\boldsymbol{G}_N, \boldsymbol{C}_N$, and $\boldsymbol{K}_N$ all have uniformly bounded entries.
\end{theorem}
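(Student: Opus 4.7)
The plan is to split the proof into the two claims, treating $\boldsymbol{G}_N$ first (using only Assumption~\ref{assumption:for_psi_kpsi}) and then deriving the entrywise bounds for all three matrices using Assumption~\ref{assumption:psi_bounded_Lipschitz}\ref{assumption:psi_bounded}.

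For the first claim, the symmetry $[\boldsymbol{G}_N]_{ij}=[\boldsymbol{G}_N]_{ji}$ is an immediate consequence of the symmetry of the $L^2(\mathbb{X},\mu_0)$ inner product defining the matrix entries in \eqref{eq:GNmatrix}. To show positive definiteness, I would take an arbitrary $c=(c_1,\dots,c_N)^\top\in\mathbb{R}^N$ and write
\begin{equation*}
    c^\top \boldsymbol{G}_N c
    = \sum_{i,j=1}^N c_i c_j \langle \psi_i,\psi_j\rangle_{\Ff}
    = \Big\langle \sum_{i=1}^N c_i \psi_i,\,\sum_{j=1}^N c_j \psi_j\Big\rangle_{\Ff}
    = \Big\lVert \sum_{i=1}^N c_i \psi_i \Big\rVert_{\Ff}^{\,2} \geq 0.
\end{equation*}
Equality to zero forces $\sum_i c_i\psi_i = 0$ in $\Ff$, and linear independence of $\Psi$ from Assumption~\ref{assumption:for_psi_kpsi}\ref{a1} then gives $c=0$. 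Hence $\boldsymbol{G}_N$ is symmetric positive definite, which is exactly what is needed for $\boldsymbol{G}_N^{-1}$ to exist.

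For the second claim, the bounds on $\boldsymbol{G}_N$ and $\boldsymbol{C}_N$ are direct: using $|\psi_n|<\gamma$ $\mu_0$-almost everywhere and the representations \eqref{eq:GNmatrix}--\eqref{eq:CNmatrix} of the matrix entries as expectations, I would bound
\begin{equation*}
    \big|[\boldsymbol{G}_N]_{ij}\big| \leq \E\big[|\psi_i(\xi)|\,|\psi_j(\xi)|\big] \leq \gamma^2,
    \qquad
    \big|[\boldsymbol{C}_N]_{ij}\big| \leq \E\big[|\psi_i(X_T^{\xi,\mu})|\,|\psi_j(\xi)|\big] \leq \gamma^2,
\end{equation*}
which is uniform in the indices $i,j$. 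For $\boldsymbol{K}_N$, I would start from the identity $\boldsymbol{K}_N^{\top} = \boldsymbol{C}_N \boldsymbol{G}_N^{-1}$ from \eqref{eq:GalerkinProjectionKoopman}. Since $\boldsymbol{G}_N$ is symmetric positive definite, its smallest eigenvalue $\lambda_{\min}(\boldsymbol{G}_N)$ is strictly positive, so $\lVert \boldsymbol{G}_N^{-1}\rVert_2 = 1/\lambda_{\min}(\boldsymbol{G}_N)$ is finite, and consequently each entry of $\boldsymbol{G}_N^{-1}$ is bounded in absolute value by this quantity. Writing out the matrix product entrywise gives
\begin{equation*}
    \big|[\boldsymbol{K}_N]_{ij}\big|
    = \Big|\sum_{k=1}^N [\boldsymbol{C}_N]_{jk}\,[\boldsymbol{G}_N^{-1}]_{ki}\Big|
    \leq \frac{N\,\gamma^2}{\lambda_{\min}(\boldsymbol{G}_N)},
\end{equation*}
which is the desired uniform entrywise bound.

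The only subtle point I expect is the step for $\boldsymbol{K}_N$: one must invoke positive definiteness of $\boldsymbol{G}_N$ (from the first part) to get a quantitative handle on $\boldsymbol{G}_N^{-1}$, rather than merely invertibility. Once that translation from positive definiteness to a spectral/norm bound is made, the rest of the proof reduces to applying the triangle inequality under the expectation and using the uniform bound $\gamma$ on the dictionary.
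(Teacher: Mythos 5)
Your proof is correct and follows essentially the same route as the paper's (which largely defers to a citation): the standard Gram-matrix argument for symmetry and positive definiteness via linear independence, entrywise bounds on $\boldsymbol{G}_N$ and $\boldsymbol{C}_N$ from the uniform bound $\gamma$ on the dictionary, and boundedness of $\boldsymbol{K}_N$ from the product formula. Your explicit control of $\boldsymbol{G}_N^{-1}$ through $\lambda_{\min}(\boldsymbol{G}_N)$ simply spells out a step the paper leaves implicit.
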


\begin{proof}
The proof can be found in \cite[Section 2.3]{llamazares2024data}.
Since $\boldsymbol{G}_N$ is the Gram matrix of a linearly independent set of functions (also symmetric), we have
\begin{align*}
    \det\left(\boldsymbol{G}_N\right) \neq 0 \quad \text { a.s.~for any } N.
\end{align*}
Thus, $\bm{G}_N$ is indeed invertible. We have that $\psi_i$ are bounded for all $i$ and, using properties of expectations, we obtain $\mathcal{K}_T \psi_i$ is also bounded for all $i$. Hence, for the integrals it holds that
\begin{equation*}
    \langle \psi_i, \psi_j \rangle_{\mathcal{F}} < \infty \quad \text{and} \quad
 \langle \mathcal{K}_T \psi_i, \psi_j \rangle_{\mathcal{F}} < \infty.
\end{equation*}
Therefore, the matrices $\bm{G}_N$ and $\bm{C}_N$ are well-defined with uniformly bounded entries, which concludes that $\bm{K}_N$ also well defined with uniformly bounded entries.
\end{proof}

\subsection{The spectrum of the operator}

We seek to identify the eigenvalues and eigenfunctions of the Koopman operator associated with the underlying decoupled McKean--Vlasov dynamics. The eigenvalues closest to one represent the slowest timescales and their corresponding eigenfunctions contain information about the metastable sets of the dynamics (see, e.g., \cite{Schutte_Klus_Hartmann_2023}).

The relationship between the eigenvectors of the projected Koopman matrix representation and the eigenfunctions of the projected Koopman operator is as follows. If $v$ is an eigenvector of $\bm{K}_N \in \bR^{N\times N}$ corresponding to the eigenvalue \(\lambda \in \bR\), i.e., \(\bm{K}_N v=\lambda v\). Then the eigenfunction of $\mathcal{K}_N$ is $f(\cdot)=v^{\top} \bm{\psi}(\cdot)$ since
\begin{equation}
    \label{eq:spectrum-K-to-KN}
    (\Kk_N f)(\cdot)
    \approx
    \big(\bm{K}_N v\big)^{\top} \bm{\psi}(\cdot)
    =\lambda v^{\top} \bm{\psi}(\cdot)
    =\lambda f(\cdot),
\end{equation}
where $\bm{\psi}(\cdot) = \big(\psi_1(\cdot), \dots, \psi_N(\cdot)\big)^\top$, $\psi_i$ the basis functions. In other words, the eigenvalues of $\bm{K}_N$ approximate the Koopman eigenvalues, and the eigenvectors of $\bm{K}_N$ approximate the Koopman eigenfunctions projected onto $ \mspan(\Psi) $ \cite[Section 5.1.1]{Schutte_Klus_Hartmann_2023}.

It is clear that the procedure for finding the Koopman operator's eigenfunctions can be extended to the Perron--Frobenius operator, see explicitly Algorithm \ref{alg:EDMD}.

\section{Convergence in the infinite data limit}
\label{sec:convergence_data_limit}

In this section, we will show how extended dynamic mode decomposition (EDMD)~\cite{williams2015data} can be used to compute the matrix representations of the projected transfer operators associated with the Mckean--Vlasov SDEs from data. Additionally, we will prove that in the limit of infinitely many data points and infinitely small time steps, the data-driven matrices converge to the operator matrices.

\subsection{Data assumptions and the EDMD algorithm}

The data we use to approximate the matrix representation of $\Kk_N$ is as follows:
\begin{equation}\label{eq:data}
    \left\{\psi_n(\xi^m), \psi_n( \xmuhx)\right\}_{n, m},
    \qquad
    n=1,\ldots,N,\ m = 1, \dots, M,
\end{equation}
where $\{ \psi_n\}_{n = 1, 2, \dots, N}$ are the basis functions \eqref{N_basis}, $(\xi^m, W^m)_{m = 1, 2, \dots, M}$ are a sequence of independent copies of $(\xi,W)$, where $W$ is a Brownian motion, $\xi$ has distribution $\mu_0$, and $\xmuhx$ is the implementable numerical scheme for the decoupled McKean--Vlasov SDE defined in \eqref{eq:implementable-EM}. The data is detailed in Assumption \ref{assumption:data_num_results}\ref{assumption:data}.

The computational input to the EDMD algorithm are the (pairwise) i.i.d.\ samples \eqref{eq:data} and the data-driven approximations of $\bm{G}_N$ in \eqref{eq:GNmatrix} and $\bm{C}_N$ in \eqref{eq:CNmatrix} are given by
\begin{align}
    \label{eq:GN_int}
    \qty[\widehat{\bm{G}}_{NM}]_{ij} &:=\frac{1}{M} \sum_{m=1}^M \psi_i(\xi^m) \psi_j(\xi^m),
    \\
    \label{eq:CN_int}
        \qty[\co]_{ij}
        & :=\frac{1}{M} \sum_{m=1}^M \psi_i(\xmuhx) \psi_j(\xi^m).
\end{align}
We define the data-driven approximation of the Koopman operator as
\begin{align} \label{eq:data-driven-koopman}
    \qty[\ko]^{\top} := \co \wh{\bm{G}}_{NM}^{-1},\quad \text{where } \wh{\bm{G}}_{NM}^{-1} \text{ exists}.
\end{align}

Using the duality relation that connects the Perron--Frobenius and the Koopman operator (see Lemma \ref{lemma:duality}), we can use the results of \cite[Appendix A]{KKS16} to show that the data-driven approximation of the Perron--Frobenius operator $\mathcal{P}_T$ is
\begin{equation}
    \label{eq:data-driven-perron}
    \qty[\wh{\bm{P}}_{NM}]^{\top} := \co^{\top} \wh{\bm{G}}_{NM}^{-1}  ,\quad \text{where } \wh{\bm{G}}_{NM}^{-1} \text{ exists}.
\end{equation}
The data-driven EDMD algorithm can be found in Algorithm~\ref{alg:EDMD} (see p.\pageref{alg:EDMD}).

\begin{algorithm}
\caption{EDMD Algorithm}
\label{alg:EDMD}
\begin{algorithmic}[1]
\Require $(\xi^m, X_T^{\xi^m,\hat{\mu},h})_{m = 1, 2, \dots, M}$ and $(\psi_i)_{i = 1, 2, \dots, N}$
\State Compute the matrix $\widehat{\bm{G}}_{NM}$ via $\qty[\widehat{\bm{G}}_{NM}]_{ij} =\frac{1}{M} \sum_{m=1}^M \psi_i(\xi^m) \psi_j(\xi^m)$
\State Compute the matrix $\co$ via $\qty[\co]_{ij} = \frac{1}{M} \sum_{m=1}^M \psi_i(\xmuhx) \psi_j(\xi^m)$
\State Calculate $\qty[\ko]^{\top}= \co \wh{\bm{G}}_{NM}^{-1}$
\State Calculate $\qty[\wh{\bm{P}}_{NM}]^{\top}=  \co^{\top} \wh{\bm{G}}_{NM}^{-1} $
\State Find eigenvalues $ \lambda_l $ and eigenvectors $ v_l $ of $\ko$, $ l = 1, 2, \dots, N_{eig} $\Comment{ $N_{eig}$ is the number of eigenvalues required}
\State Find eigenvalues and eigenvectors of $\wh{\bm{P}}_{NM}$; $(\hat{\lambda}_l,\hat{v}_l)_{l = 1, 2, \dots, N_{eig}}$
\State Eigenfunctions of $\mathcal{K}_{NM}$ are calculated as $f_l(x) = \sum_{i=1}^N (v_l)_i \psi_i(x)$ for $l = 1, 2, \dots, N_{eig}$ and eigenfunctions of $\mathcal{P}_{NM}$ are calculated as $\hat{f}_l(x) = \sum_{i=1}^N (\hat{v}_l)_i \psi_i(x)$ for $l = 1, 2, \dots, N_{eig}$.
\end{algorithmic}
\end{algorithm}

We show that the approximation using the data in \eqref{eq:data} converges to the projected Koopman operator \eqref{eq:Koopman operator} associated with \eqref{eq:decoupledMVSDE} as the number of particles $M$ goes to infinity and the time step $h$ goes to zero. In other words, the study of this section is, for fixed $N$ and under appropriate conditions, to establish that (in the appropriate probabilistic sense)
\begin{equation*}
    \lim_{\substack{M\to \infty}} \widehat{\bm{G}}_{NM} = \bm{G}_N,
    \qquad
    \lim_{\substack{M\to \infty \\ h \to 0}} \co = \bm{C}_N,
    \qquad
    \lim_{\substack{M\to \infty \\ h \to 0}} \ko = \bm{K}_N,
\end{equation*}
and explain how the the eigenvalues and eigenvectors of $\ko$ relate to those of $\bm{K}_N$.

\begin{assumption}[The data regime and conditions]
\label{assumption:data_num_results}
The data \eqref{eq:data} satisfies:
\begin{enumerate}[a)]
\item \label{assumption:data}
The points $\{\xi^m\}_{m=1,\dots,M}$ are i.i.d.\ samples from the distribution $\mu_0\in \mathcal{Q}_2(\bR^d)$. The points $\{\xmuhx\}_{m=1,\dots,M}$ are samples arising from a square-integrable numerical approximation scheme with step size $h$ as in \eqref{eq:implementable-EM} with initial conditions $x=\xi^m$ and some fixed decoupling measure~$\hat \mu$.
Moreover, the family
$\{(\xi^m,\xmuhx)\}_{m=1, \dots, M}$ is pairwise i.i.d.\
\item \label{assumption:num_results}
Let $X_T^{\xi,\mu}$ be the solution to \eqref{eq:decoupledMVSDE}, let $X_T^{\xi,\mu,h}$ satisfy equation \eqref{eq:non-implementable-EM}, recall that this object is the non-implementable numerical scheme of $X_T^{\xi,\mu}$. Let $X_T^{\xi,\hat{\mu},h}$ be the corresponding numerical scheme with $\hat{\mu}$ instead of $\mu$ defined in \eqref{eq:implementable-EM}. Then, assume that we have
\begin{equation*}
    \E\left[\abs{X_T^{\xi,\mu,h}-X_T^{\xi,\mu}}^2\right] \leq C_1 h, \quad \E\left[\abs{X_T^{\xi,\hat{\mu},h}-X_T^{\xi,\mu,h}}^2\right] \leq C_2 \sup_k W_2(\mu_{t_k},\hat{\mu}_{t_k})^2.
\end{equation*}
Hence, it holds that
\begin{align*}
    \E\left[\abs{X_T^{\xi,\hat{\mu},h}-X_T^{\xi,\mu}}^2\right] &\leq 2 C_1 h + 2 C_2\sup_k W_2(\mu_{t_k},\hat{\mu}_{t_k})^2 \intertext{and} \\
    \E\left[\abs{X_T^{\xi,\hat{\mu},h}-X_T^{\xi,\mu}}\right] &\leq  \sqrt{C_1 h} + \sqrt{C_2}\sup_k W_2(\mu_{t_k},\hat{\mu}_{t_k}). \\
\end{align*}
\item \label{assumption:mu_hat_tends_to_mu} Assume that
\begin{equation*}
    \sup_{k} W_2(\mu_{t_k},\hat{\mu}_{t_k}) \to 0 \text{ as } h \to 0.
\end{equation*}
\end{enumerate}
\end{assumption}

\begin{remark} Assumptions \ref{assumption:data_num_results}
\ref{assumption:num_results} and
\ref{assumption:mu_hat_tends_to_mu} are justified due to Theorem \ref{theorem:decoupled_num_scheme} and \cite{dosReisSmithTankov2023ISofrMVSDE}. If $\hat{\mu}$ is defined as  \eqref{eq:mu_hat}, then for appropriately chosen $\tilde{h}$ and $\tilde{M}$ one can ensure the convergence of $\hat{\mu}$ to $\mu$ as $h \to 0$.
For example, if dimension of the process is less than four, the Wasserstein distance of $\mu$ and $\hat{\mu}$ squared is of order $h$ with probability $1-\delta$, where $\delta \in (0,1)$ if $\tilde{h} = \delta h$ and $\tilde{M}=\frac{1}{\delta^2 h^2}$. A proof of this result can be found in Appendix~\ref{appendix:num_scheme_results}.
\end{remark}

\subsection{Convergence results}

The convergence analysis of $ \widehat{\bm{G}}_{NM} \to \bm{G}_N $ was already carried out in \cite[Lemma 3.4]{llamazares2024data} (see also~\cite[\textsection4]{kordamezic2018OnConvEDMD}). Here, we state an extended variant of it for completeness and expand the proof.

\begin{lemma}[Convergence and invertibility of $\widehat{\bm{G}}_{NM}$]
\label{lemma:GNM-is-nice}
Let Assumption \ref{assumption:for_psi_kpsi} hold. Then $\widehat{\bm{G}}_{NM} \xrightarrow[]{a.s.} \bm{G}_{N}$ as $M \rightarrow \infty$. Moreover, there exists a $\mathbb{Z}^+$-valued random variable $m_0$ such that $\widehat{\bm{G}}_{NM}$ is a.s.-invertible for any $M\geq m_0$. Provided that the additionally Assumption~\ref{assumption:psi_bounded_Lipschitz} holds, we have that $\widehat{\bm{G}}_{NM} \xrightarrow[]{L^2(\Omega)} \bm{G}_{N}$ as $M\to\infty$ and there exists a number $m_0^*\in \bZ^+$ such that for any $M\geq m_0^*$ the matrix $\widehat{\bm{G}}_{NM}$ is invertible (in $L^2(\Omega)$).
\end{lemma}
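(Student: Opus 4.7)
My plan is to reduce everything to an entrywise analysis of
$[\widehat{\bm{G}}_{NM}]_{ij}=\frac{1}{M}\sum_{m=1}^{M}\psi_i(\xi^m)\psi_j(\xi^m)$, which is an empirical average of the i.i.d.\ sequence $\{\psi_i(\xi^m)\psi_j(\xi^m)\}_{m\geq 1}$ (Assumption~\ref{assumption:data_num_results}\ref{assumption:data}) whose common mean is exactly $[\bm{G}_N]_{ij}=\E[\psi_i(\xi)\psi_j(\xi)]$ from \eqref{eq:GNmatrix}. For the a.s.\ statement I would apply the Strong Law of Large Numbers; the necessary integrability follows from $\psi_i,\psi_j\in L^2(\mathbb{X},\mu_0)$ together with Cauchy--Schwarz, giving $\E[|\psi_i(\xi)\psi_j(\xi)|]\leq\|\psi_i\|_{L^2}\|\psi_j\|_{L^2}<\infty$. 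Entrywise a.s.\ convergence then lifts to matrix a.s.\ convergence in any norm on $\bR^{N\times N}$.

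For the a.s.\ invertibility I would use continuity of the determinant. By Theorem~\ref{theo:BasicResultsOn-CN-GN-KN} the limit matrix satisfies $\det(\bm{G}_N)\neq 0$, so the previous step combined with continuity yields $\det(\widehat{\bm{G}}_{NM})\to\det(\bm{G}_N)\neq 0$ a.s. The random threshold is then
\[
m_0(\omega):=\inf\set{M'\geq 1 : \det(\widehat{\bm{G}}_{NM}(\omega))\neq 0 \text{ for all } M\geq M'},
\]
which is a.s.\ finite and $\mathbb{Z}^+$-valued.

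Under the additional boundedness of Assumption~\ref{assumption:psi_bounded_Lipschitz}\ref{assumption:psi_bounded} the $L^2$ convergence reduces to a variance computation: since $|\psi_i\psi_j|\leq\gamma^2$ and the $\xi^m$ are i.i.d., the cross terms vanish and
\[
\E\bigl[\bigl([\widehat{\bm{G}}_{NM}]_{ij}-[\bm{G}_N]_{ij}\bigr)^2\bigr]
 =\frac{1}{M}\Var(\psi_i(\xi)\psi_j(\xi))\leq\frac{\gamma^4}{M},
\]
which gives $L^2(\Omega)$ convergence entrywise, and hence for the matrix (any two matrix norms on $\bR^{N\times N}$ are equivalent, so the number of entries $N^2$ is absorbed into a constant).

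The subtler step is producing the deterministic threshold $m_0^*$, since matrix inversion is not continuous at singular matrices. My plan is to combine the variance bound above with Chebyshev's (or Hoeffding's) inequality to get $\bP(\|\widehat{\bm{G}}_{NM}-\bm{G}_N\|>\varepsilon)\leq C(\varepsilon)/M$; choosing $\varepsilon<1/(2\|\bm{G}_N^{-1}\|)$ and then $m_0^*$ large enough, a Neumann-series (Banach lemma) argument shows that on this high-probability event $\widehat{\bm{G}}_{NM}$ is invertible with $\|\widehat{\bm{G}}_{NM}^{-1}\|\leq 2\|\bm{G}_N^{-1}\|$. The hard part will be giving a precise meaning to ``invertibility in $L^2(\Omega)$'', since the matrix is in principle singular with positive (though vanishing) probability; my reading is that on a set of probability tending to one the inverse exists with a deterministic uniform bound, which is what is needed downstream to propagate $L^2$ convergence to $\ko=\chat\widehat{\bm{G}}_{NM}^{-1}$, possibly combined with a truncation on the complementary event.
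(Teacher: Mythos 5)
Your proposal is correct and follows the same skeleton as the paper's proof: the strong law of large numbers applied entrywise to the i.i.d.\ products $\psi_i(\xi^m)\psi_j(\xi^m)$, followed by continuity of the determinant and Theorem~\ref{theo:BasicResultsOn-CN-GN-KN} to extract the random threshold $m_0$. The two arguments diverge only in the second half. For the $L^2(\Omega)$ convergence the paper invokes the dominated convergence theorem on top of the a.s.\ convergence, whereas you compute the variance directly and obtain the explicit rate $\E[([\widehat{\bm{G}}_{NM}]_{ij}-[\bm{G}_N]_{ij})^2]\leq \gamma^4/M$; your route is slightly stronger since it yields a quantitative bound rather than bare convergence, at no extra cost given Assumption~\ref{assumption:psi_bounded_Lipschitz}\ref{assumption:psi_bounded}. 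For the final claim the paper only says the invertibility ``follows in a similar way,'' while you supply the missing mechanism: Chebyshev plus a Neumann-series (Banach lemma) perturbation argument showing that for $M\geq m_0^*$ the matrix is invertible with $\norm{\widehat{\bm{G}}_{NM}^{-1}}\leq 2\norm{\bm{G}_N^{-1}}$ on an event of probability at least $1-C/M$. Your caveat about the meaning of ``invertibility in $L^2(\Omega)$'' is well taken --- for fixed finite $M$ the matrix can in principle be singular on a set of positive probability, so a deterministic threshold can only guarantee invertibility with high probability (or up to a truncation on the complementary event), which is indeed the form of the statement actually used downstream in Theorem~\ref{lem_conv_ko}. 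Your reading is the natural repair of the lemma as stated, and nothing in your argument fails.
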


\begin{proof}
This result follows partly from \cite[Lemma 3.4]{llamazares2024data}. Since the family $\{\xi^m\}_m$ is assumed i.i.d.\ and square-integrable, then for any $i,j$ the family of random variable $Y_m:=\psi_i(\xi^m) \psi_j(\xi^m)$ is i.i.d.\ and satisfies $\bE\big[\abs{Y_m}\big]<\infty$ (due to the square-integrability of $\psi_k$), then, in turn, via the strong Law of Large Numbers (see Lemma \ref{obs_l2_wlln}) we deduce that, for all $i, j \in \{1, \dots, N\}$, when $M \to \infty$
\begin{align*}
    \left[\widehat{\boldsymbol{G}}_{N M}\right]_{i j}
    =\frac{1}{M} \sum_{m=1}^M \psi_i(\xi^m) \ts \psi_j(\xi^m)
    \xrightarrow[]{a.s.}
    \bE[ \psi_i(\xi) \ts \psi_j(\xi)]
    =
    \int \psi_i(y) \ts \psi_j(y) \ts \mu_0(\mathrm{d}y) =\left[\boldsymbol{G}_N\right]_{i j},
\end{align*}
where $\xi\sim \mu_0$.

From Theorem \ref{theo:BasicResultsOn-CN-GN-KN} we have that $\boldsymbol{G}_N$ is well-defined and that $\boldsymbol{G}_N$ is invertible with $\det\left(\boldsymbol{G}_N\right) \neq 0$. Using that the determinant operator is a continuous map we have
\begin{equation*}
    \det\Big(\widehat{\boldsymbol{G}}_{N M}\Big)
    \xrightarrow[]{a.s.}
    \det\Big(\boldsymbol{G}_N\Big) \neq 0 \quad \text { as } M \to \infty.
\end{equation*}
From the above a.s.-convergence we conclude that for any given $\varepsilon>0$ the existence of a random variable $m_0 \in \mathbb{N}$ such that
\begin{equation*}
    \abs{\det\Big(\widehat{\boldsymbol{G}}_{N M}\Big) - \det\Big(\boldsymbol{G}_N\Big)} < \varepsilon \quad \text { a.s. when } M \geq m_0.
\end{equation*}
Hence, a.s.~$\det(\widehat{\boldsymbol{G}}_{N M}) \neq 0$ for any $M \geq m_0$ ensuring the invertibility of $\widehat{\boldsymbol{G}}_{N M}$. This proves the result.

The second statement follows from the first using the boundedness of the basis functions $\psi$ in Assumption~\ref{assumption:psi_bounded_Lipschitz} (and employing the dominated convergence theorem), yielding for all $i, j \in\{1, \dots, N\}$
\begin{equation*}
    \bE\Big[ \abs{
    [\widehat{\boldsymbol{G}}_{N M}]_{i j}
    -
    [\boldsymbol{G}_N]_{i j}
    }^2 \Big]
    \xrightarrow[]{}0,\qquad \text{as\ }M \to \infty.
\end{equation*}
The invertibility result follows in a similar way (using the continuity of the determinant operator and the convergence in $L^2(\Omega)$).
\end{proof}

\begin{lemma}[ $L^2(\Omega)$-convergence of $\co$]\label{lem_conv_co}Let Assumptions \ref{assumption:for_psi_kpsi}, \ref{assumption:psi_bounded_Lipschitz} and \ref{assumption:data_num_results} hold. Then, for fixed $N$, the matrices $\co$ defined in \eqref{eq:GN_int} and $\bm{C}_N$ defined in \eqref{eq:CNmatrix} satisfy
\begin{equation*}
    \co \xrightarrow{L^2(\Omega)} \bm{C}_N \quad \text{as } M \to \infty \text{ and as } h \to 0.
\end{equation*}
\end{lemma}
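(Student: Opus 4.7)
The plan is to introduce an intermediate matrix $\bm{C}_N^h$ with entries $[\bm{C}_N^h]_{ij} := \E[\psi_i(X_T^{\xi,\hat{\mu},h})\psi_j(\xi)]$ and decompose
\begin{equation*}
    \co - \bm{C}_N = (\co - \bm{C}_N^h) + (\bm{C}_N^h - \bm{C}_N).
\end{equation*}
The first piece is a Monte Carlo fluctuation that vanishes as $M\to\infty$; the second is a deterministic bias coming from the numerical step $h$ and the approximation $\hat\mu$ of $\mu$, which vanishes as $h\to 0$.

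First, for the variance term, fix indices $i,j$ and set $Y_m := \psi_i(\xmuhx)\psi_j(\xi^m)$. By Assumption \ref{assumption:data_num_results}\ref{assumption:data} the family $\{(\xi^m,\xmuhx)\}_m$ is pairwise i.i.d., and by Assumption \ref{assumption:psi_bounded_Lipschitz}\ref{assumption:psi_bounded} we have $|Y_m|\leq \gamma^2$ a.s., so in particular $\E[Y_m] = [\bm{C}_N^h]_{ij}$. Thus
\begin{equation*}
    \E\Big[\big|[\co]_{ij} - [\bm{C}_N^h]_{ij}\big|^2\Big] = \frac{\Var(Y_1)}{M} \leq \frac{\gamma^4}{M},
\end{equation*}
which tends to zero as $M\to\infty$ uniformly in $h$. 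This is essentially an $L^2$ weak law of large numbers, in the same spirit as the first step of Lemma \ref{lemma:GNM-is-nice}.

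Second, for the bias term, I write
\begin{equation*}
    [\bm{C}_N^h]_{ij} - [\bm{C}_N]_{ij} = \E\big[(\psi_i(X_T^{\xi,\hat{\mu},h}) - \psi_i(X_T^{\xi,\mu}))\psi_j(\xi)\big],
\end{equation*}
and apply Lipschitz continuity of $\psi_i$ with constant $\theta$ (Assumption \ref{assumption:psi_bounded_Lipschitz}\ref{assumption:psi_lip}), the pointwise bound $|\psi_j|\leq \gamma$, and the $L^1$ scheme estimate from Assumption \ref{assumption:data_num_results}\ref{assumption:num_results} to obtain
\begin{equation*}
    \big|[\bm{C}_N^h]_{ij} - [\bm{C}_N]_{ij}\big| \leq \theta\gamma\Big(\sqrt{C_1 h} + \sqrt{C_2}\sup_k W_2(\mu_{t_k},\hat{\mu}_{t_k})\Big).
\end{equation*}
Assumption \ref{assumption:data_num_results}\ref{assumption:mu_hat_tends_to_mu} then forces the right-hand side to zero as $h\to 0$. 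Combining the two pieces via $|a-b|^2 \leq 2|a-c|^2 + 2|c-b|^2$ and summing the entrywise bounds over $i,j\in\{1,\dots,N\}$ yields $\E\big[\norm{\co - \bm{C}_N}^2\big] \to 0$ in the stated double limit.

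The main obstacle is bookkeeping rather than analysis: because $\hat\mu$ may be random on an auxiliary probability space, the expectation defining $\bm{C}_N^h$ and the pairwise i.i.d.\ structure of $\{(\xi^m,\xmuhx)\}_m$ should really be interpreted conditionally on $\hat\mu$ (i.e., on the auxiliary sample space). The paper's convention of using a single expectation symbol for both spaces keeps this consistent, but one must check that the variance identity in step one and the Lipschitz/Cauchy--Schwarz bound in step two are applied to genuine i.i.d.\ random variables with the correct dependence on $\hat\mu$; once this is set up, both estimates are uniform in $\hat\mu$ (through the boundedness of $\psi$ and the bounds of Assumption \ref{assumption:data_num_results}\ref{assumption:num_results}), so the double limit $M\to\infty$, $h\to 0$ can be taken in either order.
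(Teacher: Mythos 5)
Your proposal is correct, and it uses a genuinely different decomposition from the paper's. The paper splits the empirical sum itself, writing $[\co]_{ij} = I_1 + I_2$ with $I_1 = \frac{1}{M}\sum_m (\psi_i(\xmuhx)-\psi_i(\xtm))\psi_j(\ini)$ and $I_2 = \frac{1}{M}\sum_m \psi_i(\xtm)\psi_j(\ini)$; it then bounds $\E[|I_1|^2]$ directly via the Lipschitz/boundedness assumptions and the scheme error from Assumption~\ref{assumption:data_num_results}\ref{assumption:num_results}, and applies the $L^2$ law of large numbers to $I_2$, whose summands involve the exact decoupled solution and are genuinely i.i.d. You instead split at the level of expectations, inserting the intermediate matrix $\bm{C}_N^h$ with entries $\E[\psi_i(X_T^{\xi,\hat{\mu},h})\psi_j(\xi)]$, so that the Monte Carlo fluctuation $\co - \bm{C}_N^h$ is controlled by $\gamma^4/M$ \emph{uniformly in $h$} and the bias $\bm{C}_N^h - \bm{C}_N$ is deterministic and vanishes as $h\to 0$. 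Both routes need the same ingredients (pairwise i.i.d.\ data, boundedness and Lipschitz continuity of the $\psi_i$, and the coupling estimate $\E[|X_T^{\xi,\hat\mu,h}-X_T^{\xi,\mu}|]\lesssim \sqrt{h} + \sup_k W_2(\mu_{t_k},\hat\mu_{t_k})$), but your version makes the interchangeability of the two limits explicit, which the paper's statement asserts without isolating. The paper's version, on the other hand, keeps the LLN applied to the exact-solution observables, which is the same sum reused in the subsequent almost-sure convergence result (Lemma~\ref{lemma:convergence_as_co}). Your caveat about conditioning on a random $\hat\mu$ is well taken and applies equally to the paper's argument; conditionally on $\hat\mu$ both estimates hold with constants independent of the realization, so the unconditional bounds follow.
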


\begin{proof}
Fix $N$ and recall $\co$ defined in \eqref{eq:CN_int}. Fix some $i,j \in \{1, \dots, N\}$ and without loss of generality, we prove the convergence of $[\co]_{ij}$ to $[\bm{C}_N]_{ij}=\E\left[\psi_i( X^{\xi,\mu}_T )\psi_j(\xi)\right]$. Convergence for the whole matrix then follows.

Consider the decomposition of $[\co]_{ij}$ given by, where $\xtm$ is the solution to \eqref{eq:decoupledMVSDE} starting from position $\xi^m$,
\begin{align*}
    \big[\co\big]_{ij} &= \frac{1}{M} \sum_{m=1}^M \psi_i(\xmuhx)\psi_j(\ini) \\
    &=\underbrace{\frac{1}{M} \sum_{m=1}^M \left(\psi_i(\xmuhx) - \psi_i(\xtm)\right) \psi_j(\ini)}_\text{$I_1$} + \underbrace{\frac{1}{M} \sum_{m=1}^M \psi_i(\xtm) \psi_j(\ini)}_\text{$I_2$},
\end{align*}
where $ X_0^{m,h,M} = \ini \text{ for } m = 1, 2, \dots, M $. Using the triangle inequality, we have
\begin{equation*}
    \abs{ \qty[\co - \bm{C}_N]_{ij} } \leq \abs{I_1} + \abs{I_2-\qty[\bm{C}_N]_{ij}}.
\end{equation*}
Taking the expectation on both sides and using Jensen's inequality, we have
\begin{equation*}
    \E \left[\abs{ \qty[\co - \bm{C}_N]_{ij} }^2 \right] \leq 2\E\left[\abs{I_1}^2\right] + 2\E\left[\abs{I_2-\qty[\bm{C}_N]_{ij}}^2\right].
\end{equation*}

We consider each term of the right-hand side of the above inequality separately. Using Jensen's inequality, the boundedness of the basis functions and their Lipschitz property, we obtain
\begin{align*}
    \E\left[\abs{I_1}^2\right]
    &
    \leq \frac{1}{M} \sum_{m=1}^M \E \left[ \abs{ \psi_i(\xmuhx) -\psi_i(\xtm)}^2 \abs{\psi_j(\ini)}^2  \right]
    \\
    &\leq \frac{\gamma^2 \theta^2}{M} \sum_{m=1}^M \E \left[ \abs{ \xmuhx -\xtm }^2  \right]
    \\
    &\leq \frac{\gamma^2 \theta^2 }{M} M (2 C_1 h + 2 C_2 \sup_k W_2(\mu_{t_k},\hat{\mu}_{t_k})^2) \to 0 \quad \text{as } h \to 0,
\end{align*}
where the last convergence follows from Assumption \ref{assumption:data_num_results}.

For the second term in the right-hand side, we use Lemma \ref{obs_l2_wlln} as the results concern only a family of i.i.d.\ random variables. We can apply this result because $\{\psi_i(\xtm)\psi_j(\ini)\}_{m = 1, 2, \dots, M}$ are independent with mean $\E[\psi_i(\xtm)\psi_j(\ini)]$ and (uniformly) bounded covariance since $\psi_i$ and $\psi_j$ are bounded. Concretely,
\begin{align*}
    \E\Big[\,|I_2-\qty[\bm{C}_N]_{ij}|^2\Big]
     &
     =\E\left[ \abs{
               \frac{1}{M} \sum_{m=1}^M \psi_i(\xtm)\psi_j(\ini)
                -\E\big[\psi_i(\xtm)\psi_j(\ini)\big]
               }^2 \right]
     \\
     &
     \leq
     \frac1M \Var\Big(\psi_i(X_T^{\xi,\mu})\psi_j(\xi)\Big) \to 0 \text{ as } M \to \infty.
\end{align*}

Therefore, gathering both estimates, we conclude that
\begin{equation*}
    \E \left[\abs{ \qty[\co - \bm{C}_N]_{ij} }^2 \right]  \to 0 \quad \text{ as } M \to \infty \text{ and } h \to 0,
\end{equation*}
and thus, due to the matrices being finite dimensional,
\begin{equation*}
    \co \xrightarrow{L^2( \Omega )} \bm{C}_N \text{ as } M \to \infty \text{ and } h \to 0. \qedhere
\end{equation*}
\end{proof}

\begin{lemma}[Almost sure convergence of $\co$] \label{lemma:convergence_as_co}
Let $b$ and $\sigma$ satisfy Assumptions \ref{assumption:b_sigma}. Let $\psi_i$ satisfy Assumptions \ref{assumption:for_psi_kpsi} and \ref{assumption:psi_bounded_Lipschitz}. Additionally, assume that Assumption \ref{assumption:data_num_results} holds. Then, for fixed $N$, $\co$ defined in \eqref{eq:CN_int} and $\bm{C}_N$ defined in \eqref{eq:CNmatrix} satisfy
\begin{equation*}
    \co \xrightarrow{a.s.} \bm{C}_N + \bm{E}(h) \quad \text{as } M \to \infty,
\end{equation*}
where $\bm{E}(h)$ is some deterministic matrix in $\mathbb{R}^{N \times N}$ dependent on $h$ such that
\begin{equation*}
    \bm{E}(h) \rightarrow \bm{0}_{N \times N} \quad \text{as } h \to 0.
\end{equation*}
The matrix $\bm{E}(h)$ is explicitly given in the proof.

\begin{proof}
Fix some $i,j \in \{1, \dots, N\}$ and without loss of generality, we prove almost sure convergence of $[\co]_{ij}$ to $[\bm{C}_N]_{ij}=\E\left[\psi_i( X^{\xi,\mu}_T )\psi_j(\xi)\right]$. Consider the decomposition of $[\co]_{ij}$ given by
\begin{align*}
    \big[\co\big]_{ij} &= \frac{1}{M} \sum_{m=1}^M \psi_i(\xmuhx)\psi_j(\ini) \\
    &=\underbrace{\frac{1}{M} \sum_{m=1}^M \left(\psi_i(\xmuhx) - \psi_i(\xtm)\right) \psi_j(\ini)}_\text{$I_1$} + \underbrace{\frac{1}{M} \sum_{m=1}^M \psi_i(\xtm) \psi_j(\ini)}_\text{$I_2$},
\end{align*}
where $ X_0^{m,h,M} = \ini \text{ for } m  = 1, 2, \dots, M $. Using the triangle inequality, we have
\begin{equation*}
    \abs{\qty[\co - \bm{C}_N]_{ij}} \leq \abs{I_1} + \abs{I_2-\qty[\bm{C}_N]_{ij}}.
\end{equation*}
We would like to prove that
\begin{equation} \label{eq:i1_and_i2_cn_as}
    \abs{I_1} \xrightarrow[]{a.s.} \bm{E}(h), \quad \abs{I_2-\qty[\bm{C}_N]_{ij}} \xrightarrow[]{a.s.} 0 \text { as } M \to \infty.
\end{equation}
In order to prove the first part of \eqref{eq:i1_and_i2_cn_as}, we use the strong law of large numbers is applied, which we can apply because $(\xmuhx,\xtm)_{m = 1, 2, \dots, M}$ are pairwise i.i.d.\ and $\psi_i$ are bounded.
Thus,
\begin{equation*}
    \frac{1}{M} \sum_{m=1}^M \left(\psi_i(\xmuhx) - \psi_i(\xtm)\right) \psi_j(\ini) \xrightarrow[]{a.s.} \E\qty[\left(\psi_i(X_T^{\xi,\hat{\mu},h}) - \psi_i(X_T^{\xi,\mu})\right) \psi_j(\xi)] \text{ as } M \to \infty.
\end{equation*}
Define $\bm{E}(h)$ as $\qty[\bm{E}(h)]_{ij}:=\E\qty[\left(\psi_i(X_T^{\xi,\hat{\mu},h}) - \psi_i(X_T^{\xi,\mu})\right) \psi_j(\xi)]$.

For the second part of \eqref{eq:i1_and_i2_cn_as}, we use strong law of large numbers. We can apply this result because $\{\psi_i(\xtm)\psi_j(\ini)\}_{m = 1, 2, \dots, M}$ are i.i.d.\ with mean $\E[\psi_i(\xtm) \ts \psi_j(\ini)]$ and (uniformly) bounded covariance since $\psi_i$ and $\psi_j$ are bounded. More precisely,
\begin{equation*}
    \abs{I_2-\qty[\bm{C}_N]_{ij}}
     = \frac{1}{M} \sum_{m=1}^M \psi_i(\xtm)\psi_j(\ini)
                -\E\big[\psi_i(\xtm)\psi_j(\ini)\big] \xrightarrow[]{a.s.} 0 \text{ as } M \to \infty.
\end{equation*}
Therefore, gathering both estimates we conclude
\begin{equation*}
    \abs{[\co - \bm{C}_N]_{ij}}  \xrightarrow[]{a.s.} \qty[\bm{E}(h)]_{ij} \quad \text{ as } M \to \infty,
\end{equation*}
and thus
\begin{equation*}
    \co \xrightarrow{a.s.} \bm{C}_N + \bm{E}(h) \text{ as } M \to \infty.
\end{equation*}

It remains to show that
\begin{equation*}
    \left(\psi_i(X_T^{\xi,\hat{\mu},h}) - \psi_i(X_T^{\xi,\mu})\right) \psi_j(\xi) \xrightarrow[]{L^1} 0 \text{ as } h \to 0.
\end{equation*}
Rewriting the expectation, we would like to show convergence to $0$ in the following way:
\begin{align*}
    \E\qty[\left(\psi_i(X_T^{\xi,\hat{\mu},h}) - \psi_i(X_T^{\xi,\mu})\right) \psi_j(\xi)] & \leq \E\qty[ \theta_i \abs{X_T^{\xi,\hat{\mu},h} - X_T^{\xi,\mu}} ] \gamma
    \\ & \leq \theta_i  \E\qty[ \abs{X_T^{\xi,\hat{\mu},h} - X_T^{\xi,\mu,h} +  X_T^{\xi,\mu,h} - X_T^{\xi,\mu}} ] \gamma
    \\ & \leq \theta_i  \E\qty[ \abs{X_T^{\xi,\hat{\mu},h} - X_T^{\xi,\mu,h}}]\gamma + \theta_i \E\qty[\abs{X_T^{\xi,\mu,h} - X_T^{\xi,\mu}}] \gamma.
\end{align*}
By Assumption \ref{assumption:data_num_results} \ref{assumption:num_results} and \ref{assumption:mu_hat_tends_to_mu} we have the following statements
\begin{align*}
    \E\left[\abs{ X_T^{\xi,\mu,h} - X_T^{\xi,\mu}}\right] &\leq \sqrt{C_1} \sqrt{h}, \\
    \E\left[\abs{X_T^{\xi,\hat{\mu},h} - X_T^{\xi,\mu,h}}\right] &\leq \sqrt{C_2} \sup_{k = 1, 2, \dots, K} W_2 (\mu_{t_k},\hat{\mu}_{t_k}) \to 0 \text{ as } h \to 0,
\end{align*}
from which we can conclude that
\begin{equation*}
    \qty[\bm{E}(h)]_{ij} = \E\qty[\left(\psi_i(X_T^{\xi,\hat{\mu},h}) - \psi_i(X_T^{\xi,\mu})\right) \psi_j(\xi)] \to 0 \text{ as } h \to 0,
\end{equation*}
and hence we get
\begin{equation*}
   \bm{E}(h) \rightarrow \bm{0}_{N \times N} \quad \text{as } h \to 0. \qedhere
\end{equation*}
\end{proof}
\end{lemma}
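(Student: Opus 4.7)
The plan is to mimic the decomposition used in Lemma~\ref{lem_conv_co} but to upgrade the mode of convergence from $L^2(\Omega)$ to almost sure. Fix indices $i,j\in\{1,\dots,N\}$ and split
\begin{equation*}
[\co]_{ij} = \underbrace{\frac{1}{M}\sum_{m=1}^M \bigl(\psi_i(\xmuhx) - \psi_i(\xtm)\bigr)\psi_j(\ini)}_{I_1(M,h)} + \underbrace{\frac{1}{M}\sum_{m=1}^M \psi_i(\xtm)\psi_j(\ini)}_{I_2(M)}.
\end{equation*}
The second sum is an empirical mean of an i.i.d.\ family that targets $[\bm{C}_N]_{ij}$, while the first captures the combined error of time-discretization and of replacing the true law $\mu$ with the proxy $\hat\mu$.

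Next, I would apply the strong law of large numbers to each piece separately. Under Assumption~\ref{assumption:data_num_results}\ref{assumption:data}, the pairs $\{(\ini,\xmuhx)\}_m$ are pairwise i.i.d.; since the driving Brownian motions are also independent, the solution map of the decoupled SDE \eqref{eq:decoupledMVSDE} makes the triples $\{(\ini, \xtm, \xmuhx)\}_m$ i.i.d.\ as well. Combined with the uniform boundedness of the basis functions (Assumption~\ref{assumption:psi_bounded_Lipschitz}\ref{assumption:psi_bounded}), Kolmogorov's SLLN yields $I_2(M) \to [\bm{C}_N]_{ij}$ and $I_1(M,h) \to [\bm{E}(h)]_{ij}$ almost surely as $M\to\infty$, where I set
\begin{equation*}
[\bm{E}(h)]_{ij} := \E\bigl[(\psi_i(X_T^{\xi,\hat\mu,h}) - \psi_i(X_T^{\xi,\mu}))\,\psi_j(\xi)\bigr].
\end{equation*}

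It then remains to verify $\bm{E}(h)\to \bm{0}_{N\times N}$ entry-wise as $h\to 0$. Using the Lipschitz parameter $\theta_i$ of $\psi_i$ and the uniform bound $|\psi_j|\le \gamma$ from Assumption~\ref{assumption:psi_bounded_Lipschitz}, I estimate
\begin{equation*}
\bigl|[\bm{E}(h)]_{ij}\bigr| \le \theta_i\gamma\,\E\bigl[|X_T^{\xi,\hat\mu,h} - X_T^{\xi,\mu}|\bigr],
\end{equation*}
then insert $\pm X_T^{\xi,\mu,h}$ inside the expectation and invoke the two inequalities of Assumption~\ref{assumption:data_num_results}\ref{assumption:num_results}: the discretization piece contributes $\sqrt{C_1 h}\to 0$, and the law-replacement piece contributes $\sqrt{C_2}\sup_k W_2(\mu_{t_k},\hat\mu_{t_k})$, which vanishes by Assumption~\ref{assumption:data_num_results}\ref{assumption:mu_hat_tends_to_mu}.

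The main subtlety is that $\hat\mu$ is itself a random object living on a separate probability space $(\tilde\Omega,\tilde{\mathcal F},\tilde{\mathbb P})$, so some care is needed to ensure that $[\bm{E}(h)]_{ij}$ is a genuine deterministic scalar rather than a random one. I would handle this by following the paper's convention of a single expectation symbol for both spaces and, when applying the SLLN, conditioning on $\hat\mu$; this is exactly permitted by the pairwise i.i.d.\ hypothesis in Assumption~\ref{assumption:data_num_results}\ref{assumption:data} together with the independence of the $\ini$'s and Brownian increments from $\hat\mu$, so no further measure-theoretic machinery is required.
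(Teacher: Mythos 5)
Your proposal is correct and follows essentially the same route as the paper's proof: the identical $I_1/I_2$ decomposition, the strong law of large numbers applied to each empirical mean using the pairwise i.i.d.\ structure and the uniform bounds on the $\psi_i$, the same definition of $[\bm{E}(h)]_{ij}$, and the same Lipschitz-plus-boundedness estimate combined with Assumption~\ref{assumption:data_num_results}\ref{assumption:num_results} and \ref{assumption:mu_hat_tends_to_mu} to send $\bm{E}(h)\to\bm{0}$. Your closing remark on conditioning on the randomness of $\hat\mu$ is a sensible clarification of a point the paper leaves implicit, but it does not alter the argument.
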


Taking the infinite-data limit of $\co$ results in almost sure convergence to a perturbation of the matrix $\bm{C}_N$ with the perturbation coming from the errors produced by $W_2(\mu,\hat{\mu})$ and the time step $h$. Hence taking the limit $h \to 0$ after the infinite-data limit results in the convergence to the exact desired matrix $\bm{C}_N$. Note that almost sure convergence can be obtained here, due to the trajectories being independent and identically distributed.

\begin{theorem}[Convergence of $\ko$] \label{lem_conv_ko}
If Assumptions \ref{assumption:for_psi_kpsi}, \ref{assumption:psi_bounded_Lipschitz}, and \ref{assumption:data_num_results} hold, then
\begin{equation*}
    \qty[\ko]^{\top} \xrightarrow{L^2( \Omega )} \bm{C}_N \bm{G}_N^{-1} \quad \text{as } M \to \infty \text{ and } h \to 0.
\end{equation*}
\end{theorem}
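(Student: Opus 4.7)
The plan is to combine the $L^2(\Omega)$ convergences already established for $\co$ (Lemma \ref{lem_conv_co}) and $\widehat{\bm{G}}_{NM}$ (Lemma \ref{lemma:GNM-is-nice}) via the resolvent-style decomposition
\begin{equation*}
[\ko]^\top - \bm{C}_N \bm{G}_N^{-1}
= (\co - \bm{C}_N)\,\widehat{\bm{G}}_{NM}^{-1}
+ \bm{C}_N\,\widehat{\bm{G}}_{NM}^{-1}\,(\bm{G}_N - \widehat{\bm{G}}_{NM})\,\bm{G}_N^{-1},
\end{equation*}
valid on the event $\{\widehat{\bm{G}}_{NM}\text{ is invertible}\}$, which uses the identity $\widehat{\bm{G}}_{NM}^{-1} - \bm{G}_N^{-1} = \widehat{\bm{G}}_{NM}^{-1}(\bm{G}_N - \widehat{\bm{G}}_{NM})\bm{G}_N^{-1}$. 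After taking Frobenius norms, squaring, and applying sub-multiplicativity, the whole task reduces to controlling $\|\widehat{\bm{G}}_{NM}^{-1}\|$ together with the two already-available $L^2$ convergences $\co \to \bm{C}_N$ and $\widehat{\bm{G}}_{NM} \to \bm{G}_N$.

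The main obstacle is precisely that $\|\widehat{\bm{G}}_{NM}^{-1}\|$ is not uniformly bounded over $\Omega$: on rare events where $\widehat{\bm{G}}_{NM}$ is near-singular it can blow up, which ruins a naive $L^2$ bound. To handle this I would introduce the good event
\begin{equation*}
\Omega_M := \Big\{\, \|\widehat{\bm{G}}_{NM} - \bm{G}_N\|_F \leq \tfrac{1}{2\|\bm{G}_N^{-1}\|}\, \Big\},
\end{equation*}
on which a standard Neumann-series argument gives both invertibility of $\widehat{\bm{G}}_{NM}$ and the deterministic bound $\|\widehat{\bm{G}}_{NM}^{-1}\| \leq 2\|\bm{G}_N^{-1}\|$. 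Lemma \ref{lemma:GNM-is-nice} combined with Markov's inequality yields $\mathbb{P}(\Omega_M^c) \to 0$ as $M \to \infty$.

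On $\Omega_M$ the decomposition above can be estimated directly: the first summand is bounded in $L^2$ by $2\|\bm{G}_N^{-1}\|\,\mathbb{E}[\|\co - \bm{C}_N\|_F^2]^{1/2}$, which tends to $0$ as $M\to\infty$ and $h\to 0$ by Lemma \ref{lem_conv_co}; the second is bounded by $2\|\bm{C}_N\|\|\bm{G}_N^{-1}\|^2\,\mathbb{E}[\|\widehat{\bm{G}}_{NM}-\bm{G}_N\|_F^2]^{1/2}$, which tends to $0$ by Lemma \ref{lemma:GNM-is-nice}. On the complementary event $\Omega_M^c$, adopting the natural convention that $\ko := \bm{0}$ whenever $\widehat{\bm{G}}_{NM}$ is not well-conditioned (consistent with $\widehat{\bm{G}}_{NM}^{-1}$ being undefined), the contribution is at most $\|\bm{C}_N \bm{G}_N^{-1}\|_F^2\, \mathbb{P}(\Omega_M^c)$, which vanishes since $\bm{C}_N \bm{G}_N^{-1}$ has uniformly bounded entries by Theorem \ref{theo:BasicResultsOn-CN-GN-KN}.

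Summing the $\Omega_M$ and $\Omega_M^c$ contributions yields $\mathbb{E}[\|[\ko]^\top - \bm{C}_N \bm{G}_N^{-1}\|_F^2] \to 0$ in the required double limit. The delicate ingredient is thus the truncation on $\Omega_M$ to tame the matrix inverse; every other step is a routine algebraic assembly of previously established estimates and does not require new probabilistic input beyond what the preceding lemmas already provide.
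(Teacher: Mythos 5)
Your proposal is correct and follows the same overall strategy as the paper: split $[\ko]^\top - \bm{C}_N\bm{G}_N^{-1}$ into algebraic pieces, each of which is killed by the $L^2(\Omega)$ convergence of $\co$ (Lemma \ref{lem_conv_co}) or of $\widehat{\bm{G}}_{NM}$ (Lemma \ref{lemma:GNM-is-nice}). The difference lies in how the matrix inverse is tamed. The paper uses the three-term decomposition $(\co-\bm{C}_N)(\widehat{\bm{G}}_{NM}^{-1}-\bm{G}_N^{-1}) + (\co-\bm{C}_N)\bm{G}_N^{-1} + \bm{C}_N(\widehat{\bm{G}}_{NM}^{-1}-\bm{G}_N^{-1})$ and asserts that Assumption \ref{assumption:psi_bounded_Lipschitz} makes both sequences uniformly bounded, then multiplies a sup-norm bound by an $L^2$ norm; this is the weakest point of the paper's argument, since boundedness of the basis functions controls the entries of $\widehat{\bm{G}}_{NM}$ but does not by itself give a uniform (in $\omega$ and $M$) bound on $\widehat{\bm{G}}_{NM}^{-1}$, nor an $L^2(\Omega)$ convergence of the inverses without further justification. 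Your resolvent identity $\widehat{\bm{G}}_{NM}^{-1}-\bm{G}_N^{-1} = \widehat{\bm{G}}_{NM}^{-1}(\bm{G}_N-\widehat{\bm{G}}_{NM})\bm{G}_N^{-1}$ combined with the truncation on the well-conditioned event $\Omega_M$ (Neumann series giving $\|\widehat{\bm{G}}_{NM}^{-1}\|\le 2\|\bm{G}_N^{-1}\|$ there, Markov's inequality giving $\mathbb{P}(\Omega_M^c)\to 0$, and an explicit convention for $\ko$ on the bad event) supplies exactly the missing control, at the cost of fixing a convention on $\Omega_M^c$ — which the paper also implicitly needs, since it defines $\ko$ only ``where $\widehat{\bm{G}}_{NM}^{-1}$ exists.'' In short: same skeleton, but your version is the more rigorous of the two, and the good-event truncation is the standard and correct way to close the gap the paper glosses over.
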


\begin{proof}
This result follows from Lemma \ref{lemma:GNM-is-nice} and Lemma~\ref{lem_conv_co}. We have shown that $\co\to \bm{C}_N$ and $\wh{\bm{G}}_{NM}^{-1}\to  \bm{G}_N^{-1}$ in $L^2(\Omega)$ as $M\to \infty, h\to 0$.
Additionally, Assumption~\ref{assumption:psi_bounded_Lipschitz} ensures that each sequence is bounded uniformly in $M,h$ (at fixed $N$) and thus the product of the sequences also converges in $L^2(\Omega)$.

Assume that for fixed $N$ the sequences $\co \to \bm{C}_N$ and $\widehat{\bm{G}}_{NM}^{-1} \to \bm{G}_N^{-1}$ converge in $L^2(\Omega)$ and that both sequences are uniformly bounded, then the product also converges in $L^2(\Omega)$.
To see this, we write
\begin{align*}
    \co \widehat{\bm{G}}_{NM}^{-1} - \bm{C}_N \bm{G}_N^{-1} = (\co & - \bm{C}_N)(\widehat{\bm{G}}_{NM}^{-1} - \bm{G}_N^{-1})
\\
&
+ (\co - \bm{C}_N)\bm{G}_N^{-1}
+ \bm{C}_N (\widehat{\bm{G}}_{NM}^{-1} - \bm{G}_N^{-1}).
\end{align*}
Each term on the right-hand side converges to zero in $L^2(\Omega)$.
By uniform boundedness and $L^2$ convergence, we have
\begin{equation*}
    \norm{ (\co - \bm{C}_N)(\widehat{\bm{G}}_{NM}^{-1} - \bm{G}_N^{-1}) }_{L^2(\Omega)}
    \leq
    \norm{\co - \bm{C}_N}_{\infty} \cdot \norm{\widehat{\bm{G}}_{NM}^{-1} - \bm{G}_N^{-1}}_{L^2(\Omega)} \to 0.
\end{equation*}

The second and third terms converge similarly due to the (uniform) boundedness of $\bm{G}_N^{-1}$ and $\bm{C}_N$ and the $L^2(\Omega)$ convergence of each sequence.
Therefore,
\begin{equation*}
    \qty[\ko]^\top =
    \co \widehat{\bm{G}}_{NM}^{-1} \xrightarrow{L^2(\Omega)} \bm{C}_N \bm{G}_N^{-1}\ \text{ as } \ M\to\infty \text{ and } h\to 0. \qedhere
\end{equation*}
\end{proof}

If $h$ is fixed, then the convergence in $M$ can be established in almost sure sense but with a nuance. The infinite-data limit convergence in \eqref{eq:data} is to the numerical discretization underlying the approximation of the equation's solution process $\psi_n(X_T^{\xi,\hat\mu,h})$.

\begin{theorem}[Almost sure convergence of $\ko$]
\label{lem_conv_as_ko}

Let $b$ and $\sigma$ satisfy Assumptions \ref{assumption:b_sigma}. Let $\psi_i$ satisfy Assumptions \ref{assumption:for_psi_kpsi} and \ref{assumption:psi_bounded_Lipschitz}. Additionally, assume that Assumption~\ref{assumption:data_num_results} holds. Then
\begin{equation*}
    \qty[\ko]^\top \xrightarrow{a.s.} (\bm{C}_N + \bm{E}(h)) \bm{G}_N^{-1} \quad \text{as } M \to \infty,
\end{equation*}
and
\begin{equation*}
   (\bm{C}_N + \bm{E}(h)) \bm{G}_N^{-1} \rightarrow \bm{C}_N \bm{G}_N^{-1} \quad \text{as } h \to 0.
\end{equation*}
$\bm{E}(h)$ is defined as in Lemma \ref{lemma:convergence_as_co}.
\end{theorem}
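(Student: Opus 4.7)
The plan is to assemble this result directly from the two preceding lemmas (Lemma \ref{lemma:GNM-is-nice} and Lemma \ref{lemma:convergence_as_co}) by invoking continuity of matrix inversion and continuity of matrix multiplication in the almost-sure sense, and then to separately take the $h\to 0$ limit of the deterministic perturbation $\bm{E}(h)$.

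First, I would fix a probability-one event $\Omega_0$ on which Lemma \ref{lemma:GNM-is-nice} gives $\widehat{\bm{G}}_{NM}(\omega) \to \bm{G}_N$ and on which there exists an $\omega$-dependent index $m_0(\omega)$ beyond which $\widehat{\bm{G}}_{NM}(\omega)$ is invertible. On this same event (intersected with the probability-one event delivered by Lemma \ref{lemma:convergence_as_co}), I have $\co(\omega) \to \bm{C}_N + \bm{E}(h)$. Since inversion $\bm{A}\mapsto \bm{A}^{-1}$ is continuous on the open set of invertible matrices (as $\bm{G}_N$ is invertible by Theorem \ref{theo:BasicResultsOn-CN-GN-KN}), the continuous mapping argument yields $\widehat{\bm{G}}_{NM}^{-1}(\omega) \to \bm{G}_N^{-1}$ for every $\omega$ in this full-measure event, once $M\geq m_0(\omega)$. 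Combining via continuity of matrix multiplication gives
\begin{equation*}
    [\ko]^\top = \co\,\widehat{\bm{G}}_{NM}^{-1} \xrightarrow{a.s.} (\bm{C}_N + \bm{E}(h))\,\bm{G}_N^{-1} \quad \text{as } M \to \infty,
\end{equation*}
for each fixed $h>0$. No integrability is needed beyond what the previous lemmas already require because almost-sure convergence passes through continuous operations pointwise.

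For the second statement, $\bm{E}(h)$ is a deterministic matrix independent of $M$ and $\bm{G}_N^{-1}$ is a fixed matrix, so
\begin{equation*}
    (\bm{C}_N + \bm{E}(h))\,\bm{G}_N^{-1} - \bm{C}_N \bm{G}_N^{-1} = \bm{E}(h)\,\bm{G}_N^{-1}.
\end{equation*}
Lemma \ref{lemma:convergence_as_co} already established $\bm{E}(h) \to \bm{0}_{N\times N}$ as $h\to 0$ (this used Assumption \ref{assumption:data_num_results}\ref{assumption:num_results}--\ref{assumption:mu_hat_tends_to_mu} together with Lipschitz/boundedness of $\psi$), so the claim follows by continuity of multiplication by the fixed matrix $\bm{G}_N^{-1}$.

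There is essentially no hard step here; the only subtlety worth flagging is that the order of limits matters. One cannot immediately combine the $M\to\infty$ and $h\to 0$ limits into a joint almost-sure statement, because $\bm{E}(h)$ is the a.s.-limit obtained only after sending $M\to\infty$ at a fixed $h$. Accordingly, the theorem is stated and proved as an iterated limit: almost sure convergence in $M$ first (for each fixed $h$), then a deterministic convergence in $h$. This is why the earlier $L^2(\Omega)$ version (Theorem \ref{lem_conv_ko}) could be written as a joint limit while the almost-sure version cannot, and it is the only point where one must be careful not to overclaim.
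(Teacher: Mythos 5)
Your proposal is correct and follows essentially the same route as the paper's proof, which likewise combines Lemma~\ref{lemma:GNM-is-nice} and Lemma~\ref{lemma:convergence_as_co}, uses invertibility of $\widehat{\bm{G}}_{NM}$ for $M\geq m_0$, and appeals to properties of almost sure convergence (continuity of inversion and multiplication); you have simply written out the details the paper defers to the cited reference. Your closing remark on the iterated-limit structure correctly reflects why the statement is phrased as $M\to\infty$ first and then $h\to 0$.
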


\begin{proof}
This result follows from Lemma~\ref{lemma:GNM-is-nice} and Lemma \ref{lemma:convergence_as_co}. The proof follows the arguments from \cite[Theorem 6.3]{llamazares2024data}. For $M>m_0$, $m_0$ chosen as in Lemma \ref{lemma:GNM-is-nice}, we have that $\wh{\bm{G}}_{NM}$ is invertible and by properties of almost sure convergence get the desired result.
\end{proof}

It can be shown that we obtain eigenvalue convergence of the estimated projected Koopman matrix to the eigenvalues of the exact projected Koopman operator. The theorem and its proof follow from \cite[Theorem 6.1]{llamazares2024data} and from the continuity of the spectrum map of matrices \cite{stewart1990matrix, kato1995perturbation}.

\begin{remark}
It is important to note that if we were to use data simulated from the interacting particle system to estimate the matrices $\wh{\bm{C}}_{NM}, \wh{\bm{G}}_{NM}$ and $\wh{\bm{K}}_{NM}$, then we would be able to obtain $L^2(\Omega)$ convergence to the desired projected matrices $\bm{C}_N, \bm{G}_N$ and $\bm{K}_N$ respectively. However, almost sure convergence would not be obtained. This restriction is due to the dependence of the particle trajectories to each other, which forbids us from using the strong law of large numbers, and hence does not permit us to prove almost sure convergence. The lack of almost sure convergence prevents us from constructing a theorem corresponding to \cite[Theorem 6.1]{llamazares2024data} for the interacting particle system case. However, even though we cannot mirror the aforementioned theorem of convergence for the case of IPS data, something can be said about the eigenvalue bounds, and hence the convergence of the eigenvalues in the case of $\bm{K}_N$ being a diagonalizable matrix (using the Bauer--Fike Theorem~\cite{BAUERFIKE1960}).

In conclusion, in the case of using interacting particle system data (with $\bm{K}_N$ not diagonalizable), we are not able to infer much information about the eigenvalues and eigenfunctions of the projected operator $\bm{K}_N$ from the calculated matrix $\wh{\bm{K}}_{NM}$.
\end{remark}

\section{Numerical results}
\label{sec:numerical_results}

In this section we show how EDMD can be applied to mean-field SDEs, demonstrating the efficacy and accuracy of the proposed approach. We compute the eigenfunctions and eigenvalues of Koopman and Perron--Frobenius operators associated with three benchmark problems.

\subsection{Cormier model}

The Cormier model, which can be found in \cite{quentin_2024}, exhibits interesting behavior as it can, depending on its parameters, have multiple invariant distributions and the associated linear process (i.e., the system when one of the invariant distributions is plugged in the expectation) as defined in \cite{quentin_2024} is the Ornstein--Uhlenbeck process. The McKean--Vlasov version of this SDE is given by
\begin{equation*}
    \mathrm{d}X_t = -X_t \ts \mathrm{d}t + J \ts \E[\cos(X_t)] \ts \mathrm{d}t + \sqrt{2} \ts \mathrm{d}W_t, \quad X_0 \sim \operatorname{Unif}[a, b],
\end{equation*}
where $J$, $a$, and $b$ are parameters. The invariant distributions of this SDE are of the form $\mathcal{N}(\alpha,1)$, where $\alpha$ is such that $\frac{\sqrt{e}}{J} \alpha = \cos(\alpha)$. The stability of these invariant distributions and criteria determining which distributions are stable can be found in \cite[Section 2.2]{quentin_2024}. It is in particular shown that an invariant distribution $\mathcal{N}(\alpha,1)$ is stable if and only if $\alpha \tan(\alpha)>-1$. This can also be verified using numerical experiments, as regardless of the initial conditions, the particles never converge to one of the unstable invariant distributions.

We simulate 500\ts000 particles using the decoupled numerical scheme, with step size $h = 0.1$, $J=14$, and initial conditions $X_0 \sim \operatorname{Unif}[-7.5, 10]$. The invariant distributions are in this case $\mathcal{N}(\alpha,1)$, with $\alpha \approx -4.23$ (stable), $\alpha \approx -1.83$ (unstable), $\alpha \approx 1.35$ (stable), $\alpha \approx 5.38$ (unstable), and $\alpha \approx 6.88$ (stable), see Figure~\ref{fig:quentin_inv_exact_approx}. A histogram of a simulation of decoupled particles at time $T=5$ is also shown in Figure \ref{fig:quentin_inv_exact_approx}.

\begin{figure}
    \centering
    \includegraphics[width=0.6\linewidth]{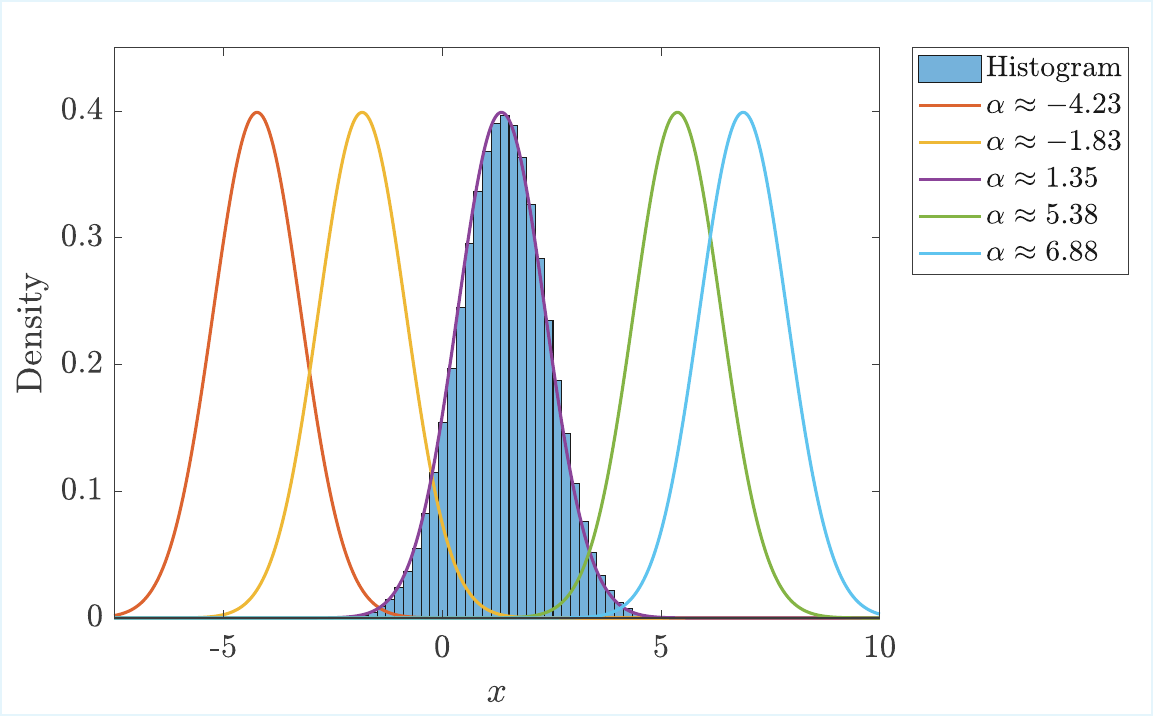}
    \caption{The exact invariant distributions of the Cormier model are shown, which are of the form $\mathcal{N}(\alpha,1)$ with the parameters $\alpha$ mentioned in the legend. The histogram shows the approximate distribution of 500\ts000 numerically simulated particles at $T=5$, with initial distribution $\operatorname{Unif}[-7.5,10]$. This shows that the particles converge to one of the stable invariant distributions.}
    \label{fig:quentin_inv_exact_approx}
\end{figure}

It is evident that all the particles are interacting and converging to one of the stable invariant distributions, more precisely they converge to the invariant distribution with $\alpha \approx 1.35$. For different initial conditions the particles can converge to a different stable invariant distribution.

In order to investigate the metastability of this system, we apply the EDMD algorithm using the lag-time $T=0.5$. Since the associated linear system behaves like an Ornstein--Uhlenbeck process, we expect the eigenvalues and eigenfunctions of the Cormier model to match those of the Ornstein--Uhlenbeck process. More specifically, the eigenfunctions of the Koopman operator should be the Hermite polynomials and the eigenvalues should be $\lambda_j=e^{-(j-1)T}$ where $ j = 1, 2, \dots $, with $j=1$ corresponding to the first eigenvalue and so on. This expected behavior is corroborated by the numerical experiments illustrated in Figures~\ref{fig:quentin_p_eig_approx} and \ref{fig:quentin_k_eig_approx},
and also by obtaining that the computed eigenvalues are $\lambda_1\approx1$, $\lambda_2\approx0.6$, $\lambda_3\approx0.36$, and $\lambda_4\approx0.22$. The basis functions used are the indicator functions (with the eigenfunction plots showing the linear interpolation between them). The example illustrates that the proposed method correctly identifies the eigenvalues and eigenfunctions of the Koopman operator associated with the Cormier model.

\begin{figure}
    \centering
    \begin{subfigure}[t]{0.45\textwidth}
        \centering
        \caption{Perron--Frobenius}
        \includegraphics[height=2in]{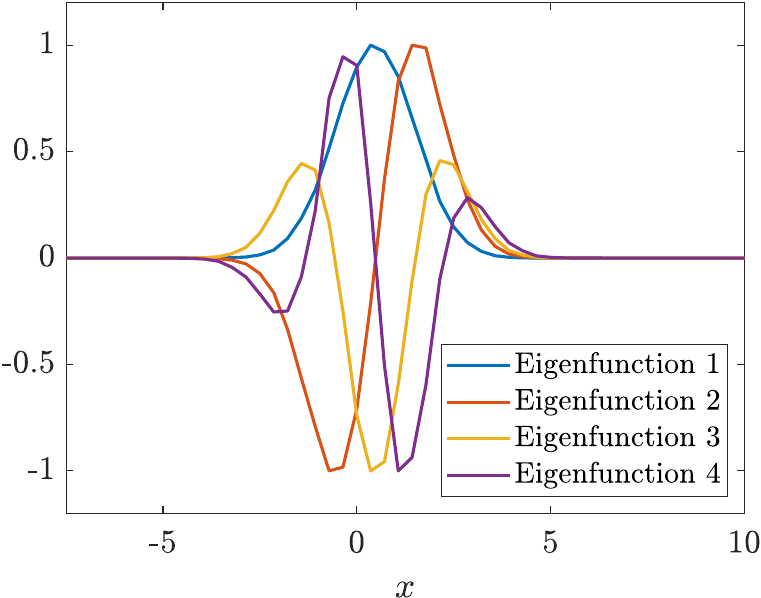}
        \label{fig:quentin_p_eig_approx}
    \end{subfigure}
    ~
    \begin{subfigure}[t]{0.45\textwidth}
        \centering
        \caption{Koopman}
        \includegraphics[height=2in]{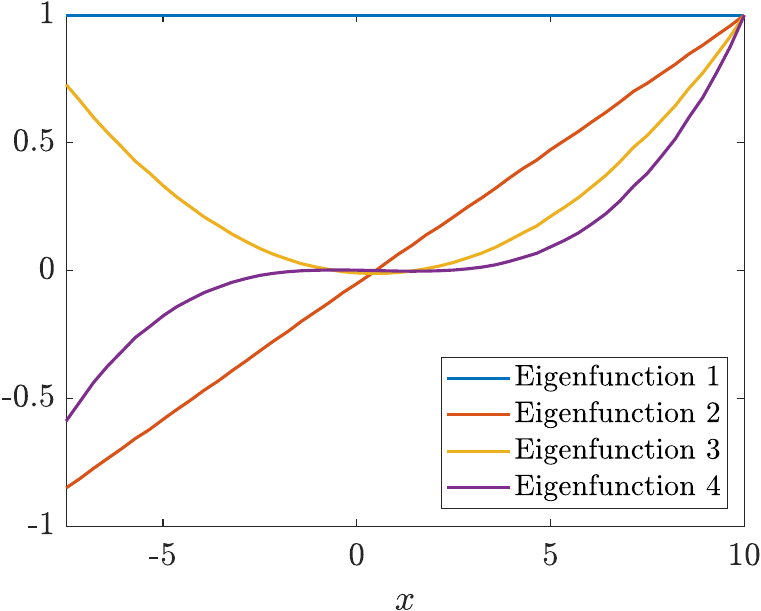}
        \label{fig:quentin_k_eig_approx}
    \end{subfigure}
    \caption{The plot (a) shows the eigenfunctions of the Perron--Frobenius operator associated with the Cormier model computed using EDMD, while (b) shows the eigenfunctions of the Koopman operator.}
\end{figure}

\subsection{Kuramoto on the circle}

The Kuramoto model on the circle, which plays an important role in neuroscience, has been previously studied as a mean-field SDE \cite{ANGELI2023, bertoli_2025, bicego2025computation} and has many variations. The variant we study here is the interacting particle system
\begin{equation*}
    \mathrm{d}X_t^m = \left(2 \sin(2 X_t^m) - \frac{1}{M} \sum_{j=1}^{M} \sin(X_t^m - X_t^j) \right) \mathrm{d}t + \sqrt{2\sigma} \ts \mathrm{d}W_t^m, \quad X_0^m \sim \xi^m,
\end{equation*}
for $ m = 1, 2, \dots, M $, which for $\sigma>0.7709$ has the unique invariant distribution $\rho = \frac{1}{Z} e^{-\cos(2x)/\sigma}$, where $ Z $ is a normalization constant. In what follows, assume that $\sigma=1$ is fixed. The resulting unique invariant distribution is shown in Figure~\ref{fig:kuramoto_inv_exact}.

\begin{figure}
    \centering
    \includegraphics[width=0.45\linewidth]{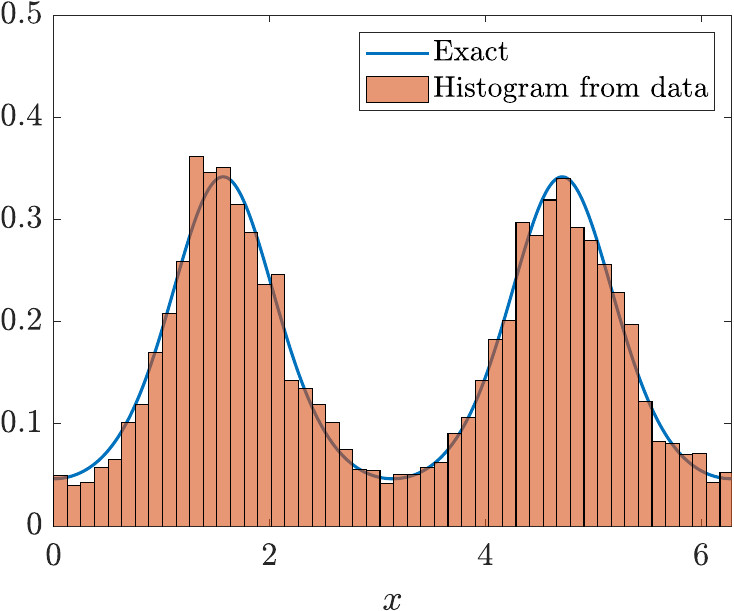}
    \caption{The exact invariant distribution of the Kuramoto mode on the circle is shown, as well as a histogram of 5000 numerically simulated particles at time $T=1$. This demonstrates that the particles converge to the exact invariant distribution.}
    \label{fig:kuramoto_inv_exact}
\end{figure}

The numerical experiments involve simulating the decoupled Euler--Maruyama scheme with 5000 particles, time step $h=0.01$, and final time $T=1$. A histogram of the final positions of the particles is shown in Figure~\ref{fig:kuramoto_inv_exact}, demonstrating that the particles converge to the exact invariant distribution.

The metastability of the system is illustrated with the aid of eigenvalues and eigenfunctions of the Koopman and Perron--Frobenius operators. The computed eigenvalues in decreasing order are $\lambda_{1}\approx 1 $, $ \lambda_{2}\approx 0.73 $, and $\lambda_{3}\approx 0.086$ (with $\lambda_n < \lambda_3$ for $n>3)$. Since all except the first two eigenvalues are small, there is only one form of metastability in our system. The eigenfunctions are illustrated in Figures~\ref{fig:kuramoto_p_eig_approx}
and \ref{fig:kuramoto_k_eig_approx}. The eigenfunctions show that the metastable sets are $[0, \pi]$ and $[\pi, 2 \pi]$. The basis functions used here to implement EDMD are monomials up to order 7.

\begin{figure}
    \centering
    \begin{subfigure}[t]{0.45\textwidth}
        \centering
        \caption{Perron--Frobenius}
        \includegraphics[height=2in]{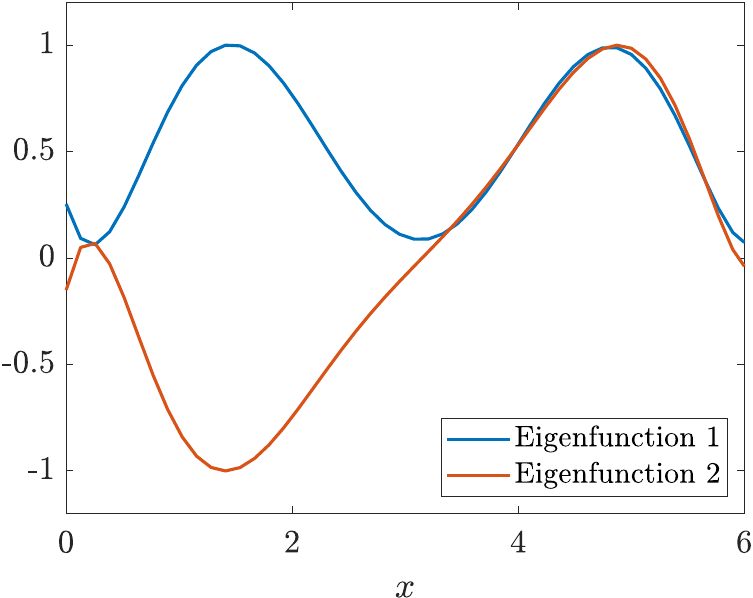}
        \label{fig:kuramoto_p_eig_approx}
    \end{subfigure}
    ~
    \begin{subfigure}[t]{0.45\textwidth}
        \centering
        \caption{Koopman}
        \includegraphics[height=2in]{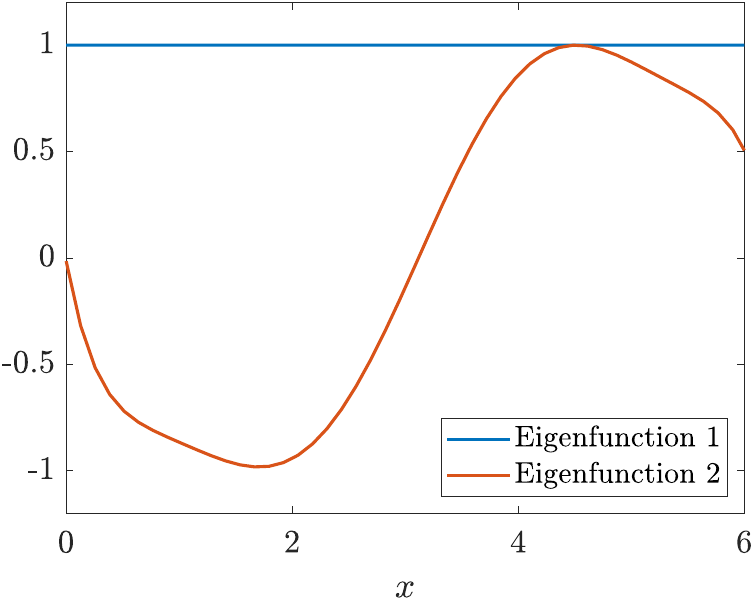}
        \label{fig:kuramoto_k_eig_approx}
    \end{subfigure}
    \caption{The plot (a) shows the eigenfunctions of the Perron--Frobenius operator for the Kuramoto model on the circle, and plot (b) shows the eigenfunctions of the Koopman operator. The data was duplicated and shifted by $\pi$ to enforce the symmetry and to improve the accuracy of the eigenfunctions.}
\end{figure}

Additionally, the metastable behavior can be further verified in the plots in Figure \ref{fig:kuramoto_traj_circle} after the particles of each metastable set are colored in blue and red, respectively. The figures show the particles on a circle at different times. It can be seen that any trajectories between $[0,\pi]$ tend to stay within that neighborhood at first, and the same holds for any particle starting in the region $[\pi, 2\pi]$. Therefore, it is clear that using the EDMD methodology one can split the space accurately into two metastable sets.

\begin{figure}
    \centering
    \begin{subfigure}[t]{0.49\textwidth}
        \centering
        \caption{t=0}
        \includegraphics[height=2in]{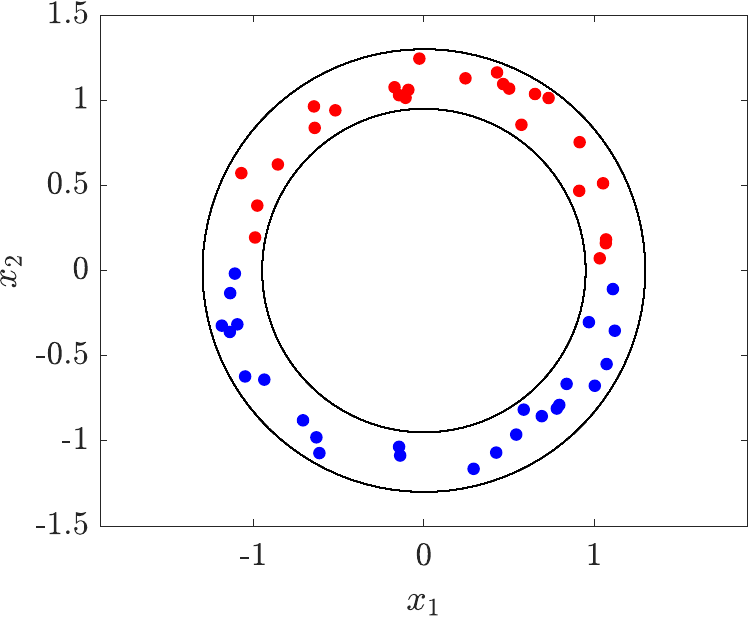}
    \end{subfigure}
    \begin{subfigure}[t]{0.49\textwidth}
        \centering
        \caption{t=0.2}
        \includegraphics[height=2in]{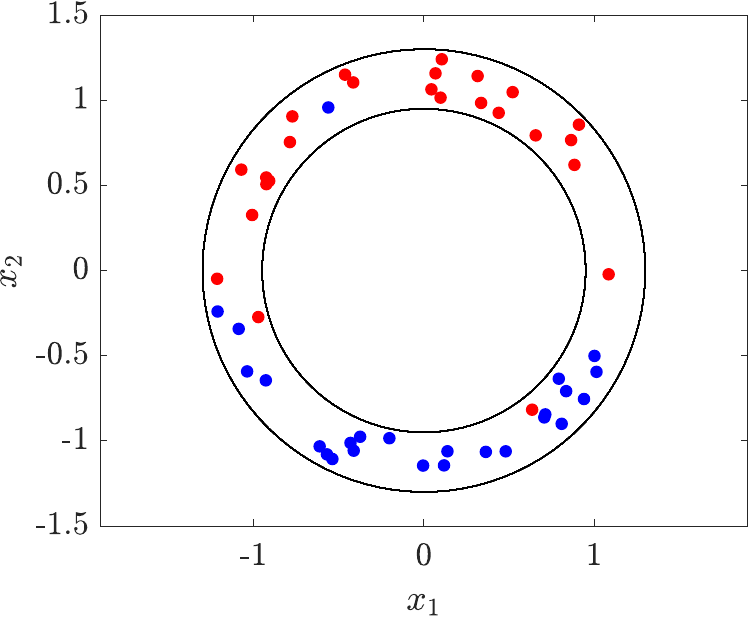}
    \end{subfigure} \\
    \begin{subfigure}[t]{0.49\textwidth}
        \centering
        \caption{t=0.6}
        \includegraphics[height=2in]{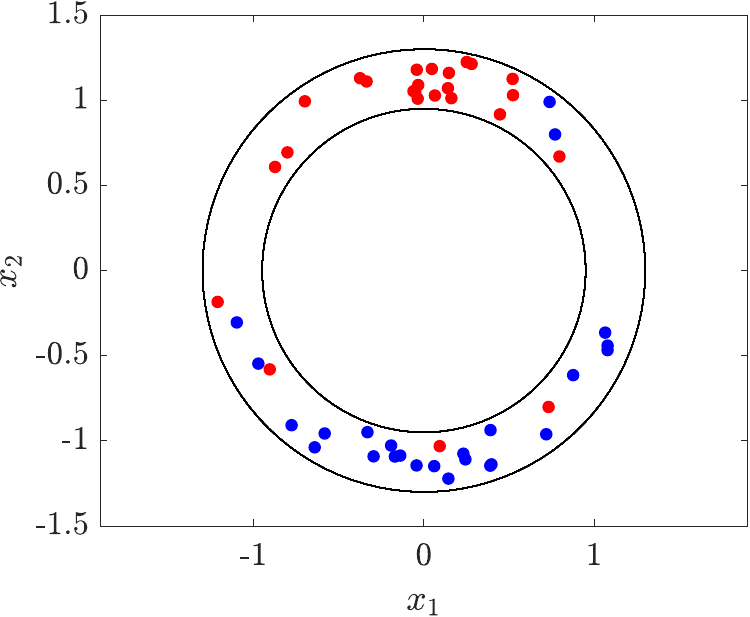}
    \end{subfigure}
    \begin{subfigure}[t]{0.49\textwidth}
        \centering
        \caption{t=1}
        \includegraphics[height=2in]{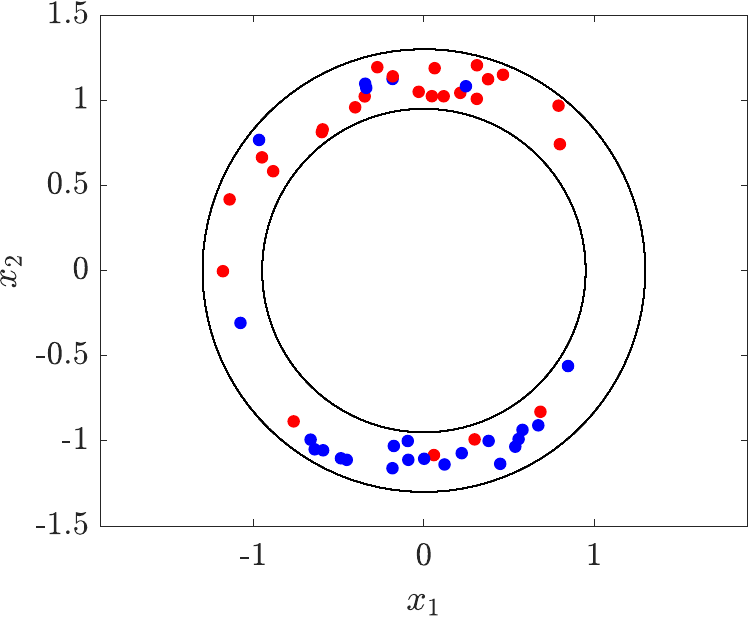}
    \end{subfigure}
    \caption{The scatter plots of positions of particles at times $t=0$, $t=0.2$, $t=0.6$, and $t=1$ are shown for 50 particles (out of the 5000 particles of the simulated system). Particles which at time $t=0$ are in the set $[0,\pi]$ are coloured red, whilst particles starting in $[\pi, 2 \pi]$ are coloured blue.}
    \label{fig:kuramoto_traj_circle}
\end{figure}

\subsection{Kuramoto on the sphere}

The model we consider here, inspired by \cite{kuramoto_sphere_2021}, is a modified version of the interacting particle system called the Kuramoto model on a sphere. The original system is deterministic, but we modify it by adding Brownian motion. The modification is based on techniques described in \cite{sdes_sphere_2016} that ensure that the particles remain on the sphere. Our interacting particle system is given by
\begin{align*}
     \mathrm{d}X_t^m &= \qty((A-\gamma^2 I) X_t^m + (I - X_t^m (X_t^m)^\top) \left(\frac{\alpha}{M}\sum_{j=1}^M X_t^j + \beta\mathbf{1}\right)) \ts \mathrm{d}t + \qty(I - X_t^m (X_t^m)^\top) \gamma \ts \mathrm{d}W_t^m, \\ \quad &X_0^m \sim \operatorname{Unif}(S^2),~ m = 1, 2, \dots, M,
\end{align*}
where $ S^2 $ denotes the unit sphere in $ \R^3 $ and $\mathbf{1}$ is a three dimensional vector of ones. We choose $\beta = -20$ with probability $\frac{1}{2}$ and $\beta=20$ with probability $\frac{1}{2}$ (i.e., $\beta = 40(b - \frac{1}{2})$ for $b \sim \operatorname{Bernoulli}(\frac{1}{2})$), $A$ is an antisymmetric matrix in $\mathbb{R}^{3 \times 3}$, $\gamma$ and $\alpha$ are positive constants. It holds that $X_t^m \in \mathbb{R}^3$ and more precisely $\norm{X_t^m }_2=1$ for all $m=1,2,...,M$, since the particles move on a sphere. As $M \to \infty$ the randomness of $\beta$ remains as is, which yields the following mean-field SDE with random drift coefficient
\begin{equation*}
    \mathrm{d}X_t = \qty((A-\gamma^2 I) X_t + (I - X_t (X_t)^\top) (\alpha \E[X_t] + \beta\mathbf{1})) \ts \mathrm{d}t + \qty(I - X_t (X_t)^\top) \gamma \ts \mathrm{d}W_t,
\end{equation*}
and $X_0$ is uniformly distributed on the sphere.

The numerical simulation of this model is done by implementing the decoupled Euler--Maruyama scheme with projection onto the sphere (as the discretization errors accumulate and particles tend to deviate away from the sphere). The parameters used are $\gamma=0.5$, $\alpha=0.5$, dimension $d=3$, time step $h = 0.01$ and number of particles $M=5000$. The antisymmetric matrix $A \in \mathbb{R}^{3 \times 3}$ is randomly generated with non-zero entries uniformly generated in $(-1,1)$. The basis functions used are based on a Voronoi discretization of the sphere. More specifically, the basis function $(\psi_i)_{i = 1, 2, \dots, N}$ are indicator functions, with $\psi_i$ taking the value $1$ for all points in region $i$, and taking the value $0$ everywhere else. In our simulations $N=200$ basis functions were used.

\begin{figure}
    \centering
    \begin{subfigure}[t]{0.45\textwidth}
        \centering
        \caption{}
        \includegraphics[height=2in]{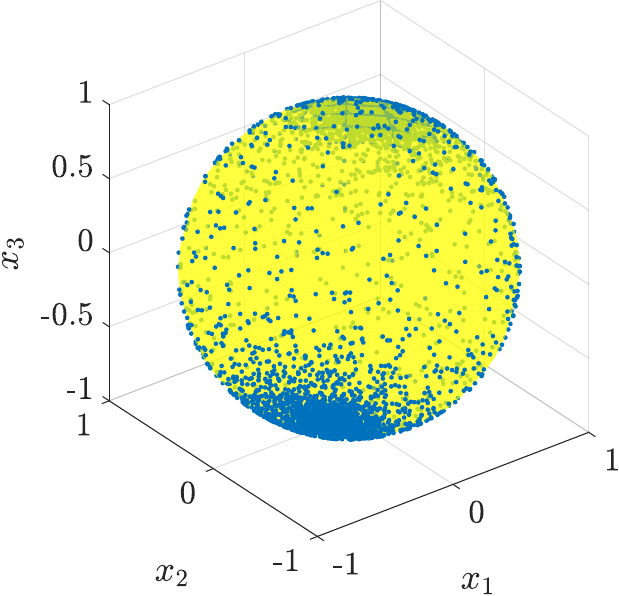}
    \end{subfigure}
    \begin{subfigure}[t]{0.45\textwidth}
        \centering
        \caption{}
        \includegraphics[height=2in]{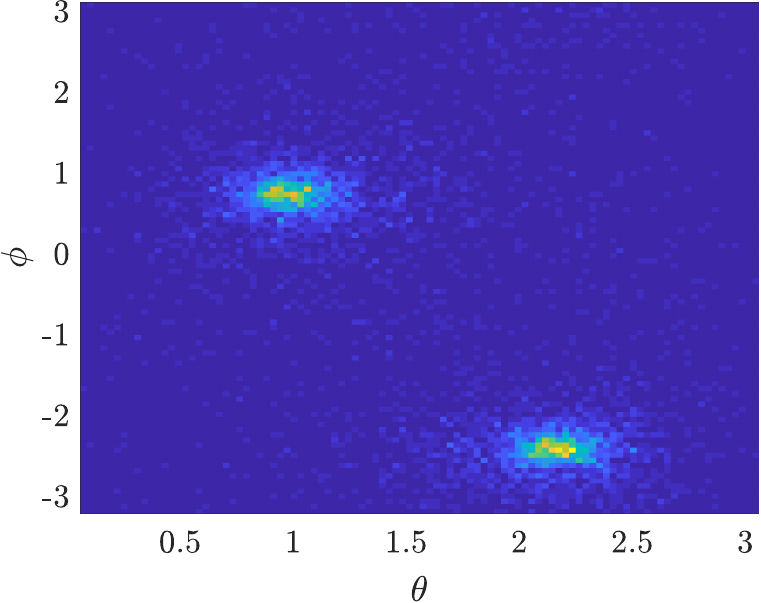}
    \end{subfigure}
    \caption{The first plot (a) shows the positions of 5000 numerically simulated particles of the Kuramoto on the sphere model at time $T=3$. The plot (b) is the histogram of the azimuthal angles $\phi$ and polar angles $\theta$ of the positions of 5000 particles at time $T=3$.}
\label{fig:kuramoto_sphere_inv_approx}
\end{figure}

The approximated invariant distribution of the particles can be seen in Figure \ref{fig:kuramoto_sphere_inv_approx}, which depicts the positions of all the 5000 particles at time $T=3$. The two dominant Koopman and Perron--Frobenius eigenfunctions are depicted in Figures~\ref{fig:kuramoto_sphere_eig}, with the basis functions values being linearly interpolated for the sake of visualization. The lag time used for estimating eigenfunctions was $T=0.5$. The eigenvalues obtained for this lag time are $\lambda_1 \approx 1$, $\lambda_2 \approx 0.662$, $\lambda_3 \approx 0.215$, and $\lambda_4 \approx 0.215$. The first eigenfunction of Koopman operator is the constant function as expected. The second eigenfunction has positive values in half of the sphere and negative values in the other half of the sphere. This indicates that the model has metastability between the two hemispheres, more precisely, particles in one hemisphere tend to stay in that hemisphere, and particles in the other hemisphere tend to stay in that region. This metastability can also be seen through the eigenfunctions of the Perron--Frobenius operator. The first eigenfunction of the Perron--Frobenius operator is indicating the invariant distribution, where there are particles concentrated mostly at the two poles of the sphere. The second eigenfunction indicates that one of the poles of the sphere is positive and one is negative, which further verifies the metastability of the model.

\begin{figure}
    \centering
    \begin{subfigure}[t]{0.45\textwidth}
        \centering
        \caption{Eigenfunction 1}
        \includegraphics[height=2in]{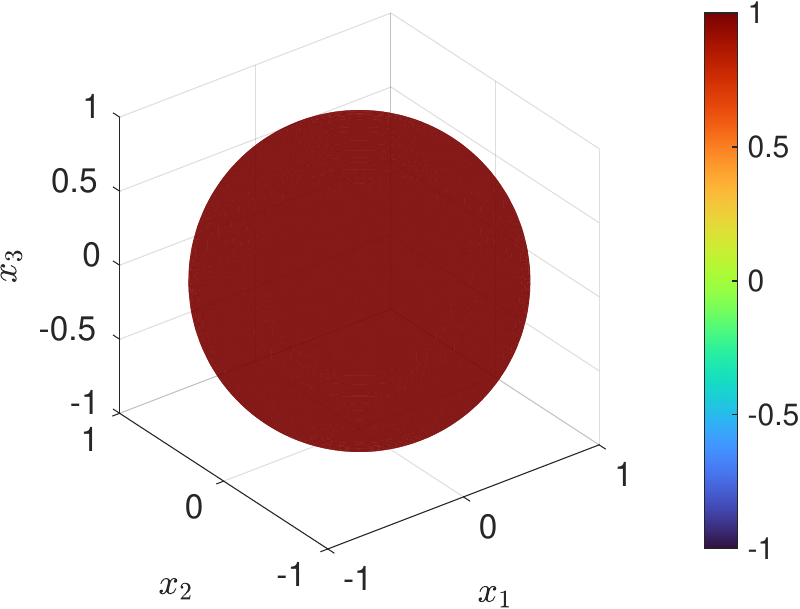}
    \end{subfigure}
    ~
    \begin{subfigure}[t]{0.45\textwidth}
        \centering
        \caption{Eigenfunction 2}
        \includegraphics[height=2in]{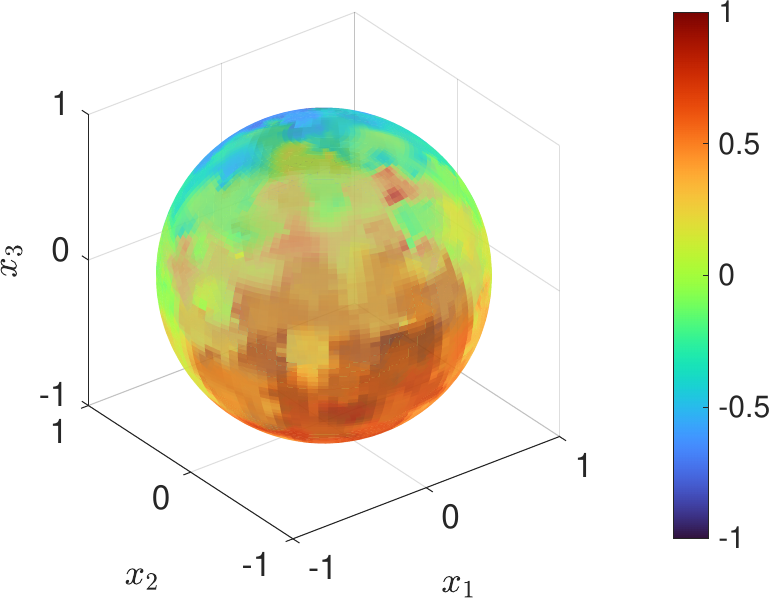}
    \end{subfigure}
    \begin{subfigure}[t]{0.45\textwidth}
        \centering
        \caption{Eigenfunction 1}
        \includegraphics[height=2in]{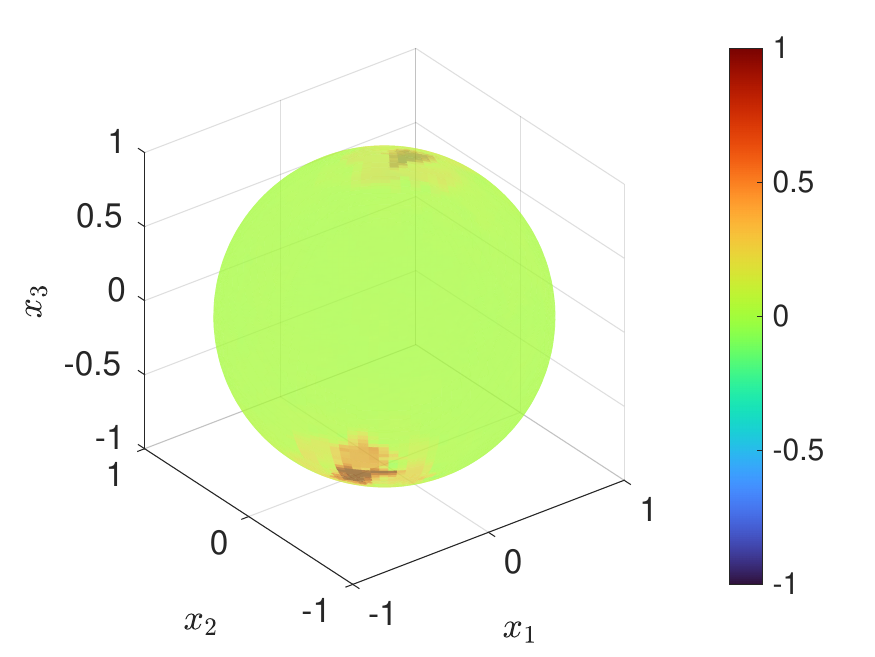}
    \end{subfigure}
    ~
    \begin{subfigure}[t]{0.45\textwidth}
        \centering
        \caption{Eigenfunction 2}
        \includegraphics[height=2in]{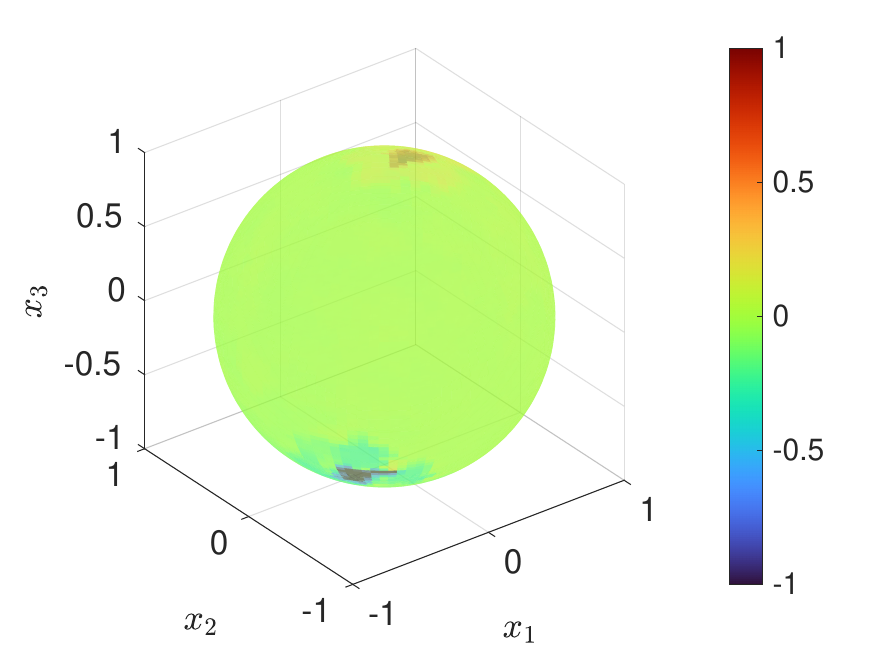}
    \end{subfigure}
    \caption{The (a) first and (b) second eigenfunctions, computed using EDMD, of the Koopman operator associated with the Kuramoto model on the sphere. The (c) first and (d) second eigenfunctions of the Perron--Frobenius operator.}
    \label{fig:kuramoto_sphere_eig}
\end{figure}

\section{Conclusion and open questions}
\label{sec:conclusion}

In this work, we have considered the behavior of interacting particle systems, with absolute symmetry of permutation of particles, when the number of interacting particles tends to infinity. These limiting equations are called McKean--Vlasov equations, and we have attempted to understand their global behavior. We have shown that it is possible to infer important characteristic properties of these stochastic dynamical systems using associated transfer operators such as the Koopman operator and the Perron--Frobenius operator. In order to circumvent the problem of the Koopman operator being nonlinear in the case of the McKean--Vlasov SDE, we have worked with the decoupled McKean--Vlasov SDE, which ensured that the transfer operators are well-defined and their eigenvalues and eigenfunctions are meaningful.
We have estimated the transfer operators by first projecting them onto a finite-dimensional subspace via Galerkin projection and using numerically simulated data to approximate the required integrals. We have shown both theoretically and numerically that these data-driven Galerkin projections are appropriate tools for obtaining important information about the underlying stochastic mean-field system.

The work presented in this paper can be extended in several interesting directions. For instance, one could exploit the reproducing kernel Hilbert space methodology to infer information about mean-field stochastic systems (this has been done in the non-stochastic case \cite{RKHS_2023}) or one could apply the gEDMD algorithm \cite{KNPNCS20} using the decoupled McKean--Vlasov SDE. Additionally, a second objective could be to investigate how effective this methodology is when it comes to finding clusters of interacting particle systems. Another opportunity for future work would be to investigate the accuracy of the estimated transfer operators depending on the step size and number of particles used. Additionally, one could analyze the effect of using finitely many dictionary basis functions on the spectra of the Koopman operator. We will return to these problems in future work.

\section*{Acknowledgments}

We would like to thank Liam Llamazares-Elias and Samir Llamazares-Elias for many interesting discussions about error bounds and convergence rates. E.I. was supported by the EPSRC Centre for Doctoral Training in Mathematical Modelling, Analysis and Computation (MAC-MIGS) funded by the UK Engineering and Physical Sciences Research Council (grant EP/S023291/1), Heriot--Watt University and the University of Edinburgh. G.d.R.~acknowledges support from the Funda{\c c}\~ao para a Ci\^{e}ncia e a Tecnologia (Portuguese Foundation for Science and Technology) through the projects \href{https://doi.org/10.54499/UIDB/00297/2020}{UIDB/00297/2020} and \href{https://doi.org/10.54499/UIDP/00297/2020}{UIDP/00297/2020} (Center for Mathematics and Applications, NOVA Math) and by the UK Research and Innovation (UKRI) under the UK government’s Horizon Europe funding Guarantee [Project UKRI343].

\bibliographystyle{unsrturl}
\bibliography{bibliography}

\appendix

\section{Auxiliary results}

\begin{lemma}[Law of large numbers {\cite[IV.\textsection3]{Shiryaev1996BookProbability}}]
\label{obs_l2_wlln}
Let $\{X_j\}_{j \geq 1}$ be a sequence of independent random variables with $\E[X_j] = \mu$ and define $S_n=X_1+X_2+ \dots +X_n$.
\begin{enumerate}[leftmargin=3.5ex, itemsep=0ex, topsep=0.5ex, label=\roman*)]
\item If $\sup_j \Var(X_j) \leq C < \infty$, then it holds that
\begin{equation*}
    \frac{S_n}{n} \xrightarrow{} \mu  \quad \text{as }  n \to \infty \quad \text{in $L^2(\Omega)$.}
\end{equation*}
\item If the family $\{X_j\}_{j \geq 1}$ is i.i.d.\ with $\bE\big[\abs{X_1}\big]<\infty$, then $ \frac{S_n}{n} \to \mu$ as $n \to \infty$ almost surely.
\end{enumerate}
\end{lemma}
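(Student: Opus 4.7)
The plan is to prove each part separately and directly.

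For part (i), the $L^2$ convergence reduces to a one-line variance computation. Since $\bE[S_n/n]=\mu$ by linearity of expectation, and since independence of the $X_j$ makes all cross terms in $\Var(S_n)$ vanish, one obtains
\begin{equation*}
    \bE\Big[\big(\tfrac{S_n}{n}-\mu\big)^{2}\Big] = \frac{\Var(S_n)}{n^2} = \frac{1}{n^2}\sum_{j=1}^n \Var(X_j)\le \frac{C}{n}\xrightarrow[n\to\infty]{}0,
\end{equation*}
where the final inequality uses the uniform variance bound $\sup_j\Var(X_j)\le C$. There is no real obstacle here.

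For part (ii), I would use Etemadi's truncation argument, which gives the cleanest modern proof of the strong law under only the first moment hypothesis $\bE[\abs{X_1}]<\infty$. First, decompose $X_j=X_j^+-X_j^-$ and handle each piece separately, so that without loss of generality we may assume $X_j\ge 0$. Define the truncated variables $Y_j:=X_j\mathbf{1}_{\{X_j\le j\}}$ and set $T_n:=\sum_{j=1}^n Y_j$. The argument then has three main ingredients:
\begin{enumerate}[(a)]
\item A Borel--Cantelli step based on $\sum_j \bP(X_j\neq Y_j)=\sum_j \bP(X_1>j)\le \bE[\abs{X_1}]<\infty$, which shows $X_j=Y_j$ for all sufficiently large $j$ almost surely, thereby reducing the problem to proving $T_n/n\to\mu$ a.s.
\item A Chebyshev plus Borel--Cantelli step along a geometric subsequence $n_k:=\lfloor\alpha^k\rfloor$ with $\alpha>1$, using the bound $\Var(Y_j)\le \bE[X_1^2\mathbf{1}_{\{X_1\le j\}}]$ and a Fubini exchange to get $\sum_j j^{-2}\bE[X_1^2\mathbf{1}_{\{X_1\le j\}}]\le 2\bE[\abs{X_1}]<\infty$; this yields $T_{n_k}/n_k-\bE[T_{n_k}]/n_k\to 0$ almost surely.
\item A sandwich step: since $T_n$ is nondecreasing in the nonnegative case, for $n_k\le n<n_{k+1}$ one has $T_{n_k}/n_{k+1}\le T_n/n\le T_{n_{k+1}}/n_k$; combining this with $\bE[T_n]/n\to\mu$ (by dominated convergence applied to $Y_j\to X_j$ in $L^1$) and then sending $\alpha\downarrow 1$ delivers convergence along the full sequence.
\end{enumerate}

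The main obstacle lies in step (b): without a second moment hypothesis, one has to exploit the truncation carefully to obtain summability of the variance series, and the choice of a geometric (rather than arithmetic) subsequence is essential, since arithmetic progressions would fail to be summable. Once step (b) is in place, step (a) is a routine truncation argument and step (c) is a straightforward monotonicity interpolation.
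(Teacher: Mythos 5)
Your proposal is correct. Note that the paper does not prove this lemma at all: it is stated as an imported auxiliary result with a bare citation to Shiryaev, so any comparison is really with the cited textbook proof rather than with an argument in the paper. Your part (i) is the standard Chebyshev computation and is complete as written. For part (ii) you follow Etemadi's truncation--subsequence--sandwich argument, whereas the route in Shiryaev's Chapter IV, \S 3 is Kolmogorov's: the same truncation $Y_j = X_j\mathbf{1}_{\{\abs{X_j}\le j\}}$ and the same variance estimate $\sum_j j^{-2}\Var(Y_j)<\infty$, but then Kolmogorov's maximal inequality and the two-series theorem give a.s.\ convergence of $\sum_j (Y_j-\E[Y_j])/j$, and Kronecker's lemma converts this into $T_n/n - \E[T_n]/n \to 0$ along the full sequence, with no need for the positive/negative-part split or the geometric subsequence. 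Your route buys elementarity (no maximal inequality, and in fact it only uses pairwise independence), at the cost of the reduction to nonnegative variables and the $\alpha\downarrow 1$ limit; Kolmogorov's route buys a cleaner passage to the full sequence via Kronecker. All three ingredients (a)--(c) of your sketch are sound, including the Fubini bound $\sum_j j^{-2}\E[X_1^2\mathbf{1}_{\{X_1\le j\}}]\le 2\E[X_1]$ and the use of $\E[Y_j]\to\E[X_1]$ with Ces\`aro averaging (monotone convergence suffices there; dominated convergence also works since $0\le Y_j\le X_1\in L^1$).
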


\section{Proof of results for numerical scheme for the decoupled McKean--Vlasov SDE}
\label{appendix:num_scheme_results}
Here, we prove the necessary results needed for the convergence of the numerical scheme for the decoupled McKean--Vlasov equation defined in Subsection \ref{subsec:numerical_results}.

The decoupled equation \eqref{eq:decoupledMVSDE}, once $\mu$ is fixed, is a standard SDE whose coefficients have an added time dependency arising from the law $\mu_t$. Hence, standard SDE results for well-posedness and uniqueness of solution, which can be found in \cite[Chapter 3]{ZhangBook_2017}, hold for equation \eqref{eq:decoupledMVSDE} if $(t,x)\mapsto b(t,x,\mu_t)$ and $(t,x)\mapsto \sigma(t,x,\mu_t)$ are uniformly Lipschitz in space and $\nicefrac{1}{2}$-H\"older in time. The latter is given by Assumption \ref{assumption:b_sigma} whereas the former is proven in Lemma~\ref{lem:TimeRegularityofDecoupledCoefficients-Holder} below.

\begin{lemma}
\label{lem:TimeRegularityofDecoupledCoefficients-Holder}
Suppose $b,\sigma$ satisfy Assumption \ref{assumption:b_sigma}.
Let $(\mu_t)_t$ be as given by \eqref{eq:MVSDE}.
Then, for any $x\in\bR^d$ the maps $t \mapsto b(t,x,\mu_t)$ and $t \mapsto \sigma(t,x,\mu_t)$ are $\nicefrac{1}{2}$-H\"older continuous in $t$ (uniformly in space).
\begin{proof}
The time regularity of $t\mapsto a(t,\cdot)=b(t,\cdot,\mu_t)$ can be established from the assumed $\nicefrac{1}{2}$-H\"older regularity in time $b(t,0,\delta_0)$ and the $\nicefrac{1}{2}$-H\"older regularity in time of $t\mapsto \mu_t$ is established in Theorem~2.3 in \cite{chen2024IMA} and Section 3.2 of \cite{salkeld2019LDP-MV-SDES}.
We have $| b(t, \cdot, \mu_t)-b(s, \cdot, \mu_s)| \leq C |t-s|^{1/2}$. The result for $\sigma$ follows in the same way.
\end{proof}
\end{lemma}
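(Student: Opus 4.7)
The plan is to control $|b(t,x,\mu_t) - b(s,x,\mu_s)|$ via the triangle inequality, splitting into a pure time-variation piece and a pure measure-variation piece. First I would write, for any $s,t \in [0,T]$ and any fixed $x \in \bR^d$,
\begin{equation*}
    |b(t,x,\mu_t) - b(s,x,\mu_s)| \leq |b(t,x,\mu_t) - b(s,x,\mu_t)| + |b(s,x,\mu_t) - b(s,x,\mu_s)|.
\end{equation*}
The first term is immediately controlled by Assumption~\ref{assumption:b_sigma}, which provides $\nicefrac{1}{2}$-Hölder regularity of $b$ in time uniformly in $(x,\mu)$, giving a bound $C_b |t-s|^{1/2}$ with $C_b$ independent of $x$. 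The second term is controlled by the Lipschitz continuity of $b$ in the measure argument under $W_2$, yielding a bound $L_b\, W_2(\mu_t,\mu_s)$. This reduces the lemma to producing a $\nicefrac{1}{2}$-Hölder bound for $t \mapsto \mu_t$ in the 2-Wasserstein metric.

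The main obstacle is exactly this last piece: showing $W_2(\mu_t,\mu_s) \leq C |t-s|^{1/2}$. My plan here is not to prove it from scratch but to exploit the canonical coupling given by a single realization of the McKean--Vlasov SDE \eqref{eq:MVSDE}: since $(X_t,X_s)$ has marginals $(\mu_t,\mu_s)$, one has
\begin{equation*}
    W_2(\mu_t,\mu_s)^2 \leq \bE\big[|X_t - X_s|^2\big],
\end{equation*}
so it suffices to establish $\bE[|X_t - X_s|^2] \leq C|t-s|$. This is a standard time-regularity estimate for McKean--Vlasov flows that follows from Assumption~\ref{assumption:b_sigma} by combining finite second moments with a BDG/Itô estimate applied to the increment $X_t - X_s$ decomposed through the drift and diffusion terms. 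Since the excerpt already appeals to \cite[Theorem 3.2]{chen2024IMA} for well-posedness, finite moments and time-regularity of the McKean--Vlasov SDE, I would simply invoke that result rather than redo the computation.

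Putting the two pieces together yields $|b(t,x,\mu_t) - b(s,x,\mu_s)| \leq (C_b + L_b \sqrt{C})\,|t-s|^{1/2}$ with a constant independent of $x$, which is precisely the uniform-in-space $\nicefrac{1}{2}$-Hölder regularity claimed. The argument for $\sigma$ is identical after replacing $(C_b, L_b)$ by $(C_\sigma, L_\sigma)$, so both statements follow from the same skeleton. In summary, the only nontrivial content is the Wasserstein time-regularity of $(\mu_t)_t$, and every other step is a direct application of Assumption~\ref{assumption:b_sigma} plus the triangle inequality.
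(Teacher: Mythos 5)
Your proposal is correct and follows essentially the same route as the paper: split via the triangle inequality into a pure time-increment term (controlled by the assumed $\nicefrac{1}{2}$-Hölder regularity of $b$ in time) and a measure-increment term (controlled by Lipschitz continuity in $W_2$ together with the $\nicefrac{1}{2}$-Hölder time-regularity of $t\mapsto\mu_t$, obtained from the coupling bound $W_2(\mu_t,\mu_s)^2\leq \bE[|X_t-X_s|^2]$ and the cited moment/time-regularity results). The paper's proof is just a terser statement of exactly this decomposition, so there is nothing to add.
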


\begin{proposition}
\label{proposition_decoupled_ineq1}
Let Assumption \ref{assumption:b_sigma} (with $L_b=L_{\sigma}=L$) hold. Assume a probability measure $\hat \mu_t \in \mathcal{Q}(\mathcal{Q}_2(\bR^d))$ is available at any $t\in[0,T]$ and is adapted to the filtration. Let the solution of the decoupled McKean--Vlasov SDE \eqref{eq:decoupledMVSDE} be $X^{x,\hat{\mu}}$.
Then, there exists some constant $C>0$ dependent on time $T$ and Lipschitz constant $L$ (and independent of the initial condition point $x$) such that
\begin{align}
\label{eq:momentboundDecoupledMVSDE-input-hat-mu}
    \bE\big[ \sup_{t \in [0,T]} |X^{x,\hat{\mu}}_t|^2 \big ]
    & \leq
    C \Big(1+|x|^2 + \sup_{t \in [0,T]}\bE\big[\,|W_2(\hat \mu_t,\delta_x)|^2 \big] \Big) e^{CT},
\end{align}
and
\begin{align}
\label{eq:momentboundDecoupledMVSDE-input-hat-mu-stabilitybound}
\norm{\xmu - \xmuhat}_{L^2(\Omega)}^2
\leq
C \sup_{ s \in[0,T]} \E[\,|W_2(\mu_s,\hat{\mu}_s)|^2],
\end{align}
where $\xmu$ is the solution to \eqref{eq:decoupledMVSDE}.
\begin{proof}
Once $\hat \mu$ is fixed, the decoupled McKean--Vlasov equation is just a standard SDE and thus this proof is a general result from standard SDE theory \cite[Theorem 3.2.4]{ZhangBook_2017}. The second statement can be proven by taking the expectation of the difference $|\xmu - \xmuhat|^2$, using $|a+b|^2\leq a^2 + b^2$, and using It\^o's isometry to get
\begin{align*}
\norm{\xmu - \xmuhat}_{L^2(\Omega)}^2
&\leq 2 T \qty(2 L^2 T \sup_{s\in[0,T]} \E[W_2^2(\mu_s,\hat{\mu}_s)] + 2L^2 \int_0^T \E[|X_s^{x,\mu}-X_s^{x,\hat{\mu}}|^2] \,ds)
 \\
& \quad + 2 \qty(2 L^2 T \sup_{s\in[0,T]} \E[W_2^2(\mu_s,\hat{\mu}_s)] + 2L^2 \int_0^T \E[|X_s^{x,\mu}-X_s^{x,\hat{\mu}}|^2] \,ds).
\end{align*}

Then applying Gronwall's inequality gives us the desired result with $C$ being $C:=(4L^2T^2 + 4L^2T)e^{4L^2T^2+4L^2T} $.
\end{proof}
 \end{proposition}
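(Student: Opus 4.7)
The plan is to treat the decoupled equation as a standard (time-inhomogeneous) SDE once $\hat\mu$ is fixed, and to verify the two bounds by direct $L^2$ energy estimates combined with Gronwall's inequality. For the first estimate, I would start by using the Lipschitz-in-measure property of Assumption~\ref{assumption:b_sigma} to convert the value of the coefficients at zero into a linear-growth statement. Concretely, for any $t$ and $y$,
\begin{align*}
    |b(t,y,\hat\mu_t)|
    &\leq |b(t,0,\delta_x)| + L\bigl(|y| + W_2(\hat\mu_t,\delta_x)\bigr)\\
    &\leq |b(t,0,\delta_0)| + L|x| + L\bigl(|y| + W_2(\hat\mu_t,\delta_x)\bigr),
\end{align*}
and similarly for $\sigma$. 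Using the $1/2$-Hölder continuity in time (Assumption~\ref{assumption:b_sigma}), $|b(t,0,\delta_0)|$ and $|\sigma(t,0,\delta_0)|$ are bounded uniformly on $[0,T]$. Therefore, the coefficients $(t,y)\mapsto b(t,y,\hat\mu_t)$ and $(t,y)\mapsto \sigma(t,y,\hat\mu_t)$ satisfy a linear growth condition with a random (but $\hat\mu$-adapted) constant whose square is $L^2\bigl(1+|x|^2+W_2(\hat\mu_t,\delta_x)^2\bigr)$, up to absolute factors.

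\noindent The next step is the standard moment estimate: apply $|a+b+c|^2\leq 3(a^2+b^2+c^2)$ to the integral form of $X^{x,\hat\mu}_t$, then Cauchy--Schwarz on the drift integral and the Burkholder--Davis--Gundy inequality on the stochastic integral, taking the sup in time before expectation. This produces
\[
    \bE\Bigl[\sup_{s\in[0,t]}|X^{x,\hat\mu}_s|^2\Bigr]
    \leq C\Bigl(1+|x|^2 + \sup_{s\in[0,T]}\bE[W_2(\hat\mu_s,\delta_x)^2]\Bigr)
    + C\int_0^t \bE\Bigl[\sup_{r\in[0,s]}|X^{x,\hat\mu}_r|^2\Bigr]\,ds.
\]
Gronwall's lemma then yields \eqref{eq:momentboundDecoupledMVSDE-input-hat-mu}. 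The only mildly delicate point is that the constant out front must be independent of $x$, which is why the $|x|^2$ term is isolated and the $W_2(\hat\mu_s,\delta_x)^2$ term is left in (rather than splitting it further into $W_2(\hat\mu_s,\delta_0)^2+|x|^2$, which would still be valid but cosmetically different).

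\noindent For the stability estimate \eqref{eq:momentboundDecoupledMVSDE-input-hat-mu-stabilitybound}, the natural move is to subtract the two integral equations, giving
\[
    \xmu - \xmuhat = \int_0^T\bigl(b(s,X^{x,\mu}_s,\mu_s)-b(s,X^{x,\hat\mu}_s,\hat\mu_s)\bigr)ds
    + \int_0^T\bigl(\sigma(s,X^{x,\mu}_s,\mu_s)-\sigma(s,X^{x,\hat\mu}_s,\hat\mu_s)\bigr)dW_s,
\]
and to bound by $2a^2+2b^2$, apply Cauchy--Schwarz to the drift integral and It\^o's isometry to the diffusion integral, and then insert the joint Lipschitz bound $L(|X^{x,\mu}_s-X^{x,\hat\mu}_s|+W_2(\mu_s,\hat\mu_s))$. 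This produces an inequality of the shape
\[
    \E\bigl[|X^{x,\mu}_t - X^{x,\hat\mu}_t|^2\bigr]
    \leq C\, t \sup_{s\in[0,T]}\E[W_2(\mu_s,\hat\mu_s)^2]
    + C\int_0^t \E\bigl[|X^{x,\mu}_s - X^{x,\hat\mu}_s|^2\bigr]\,ds.
\]
Applying Gronwall with the constant $C$ absorbing $T$ and $L$ yields \eqref{eq:momentboundDecoupledMVSDE-input-hat-mu-stabilitybound}. The main (very minor) obstacle is bookkeeping: in the first bound one must confirm that the Hölder-in-time regularity of $b(\cdot,0,\delta_0)$ and $\sigma(\cdot,0,\delta_0)$ together with the Lipschitz-in-measure gives coefficients in the standard SDE framework of \cite[Chapter~3]{ZhangBook_2017}, and in the second bound one must carefully keep the $\sup_{s}\E[W_2(\mu_s,\hat\mu_s)^2]$ factor outside the Gronwall iteration so that it does not get multiplied by $e^{CT}$ twice. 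Beyond these housekeeping items, no new ideas are needed; the result is a direct corollary of classical SDE stability theory applied to the decoupled equation with $\hat\mu$ regarded as a frozen parameter.
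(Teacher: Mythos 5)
Your proposal is correct and follows essentially the same route as the paper: for the moment bound the paper simply invokes standard SDE theory (citing \cite[Theorem 3.2.4]{ZhangBook_2017}) once $\hat\mu$ is frozen, which is exactly the linear-growth/BDG/Gronwall argument you spell out, and for the stability bound both you and the paper subtract the integral equations, apply Cauchy--Schwarz and It\^o's isometry together with the joint Lipschitz condition, and close with Gronwall while keeping $\sup_s \E[W_2(\mu_s,\hat\mu_s)^2]$ outside the iteration. No gaps; you merely make explicit the classical estimates the paper delegates to the reference.
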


For $T>0$ and over the time interval $[0,T]$ define the uniform timegrid $\pi^{h}=\{t_k= k h:k=0,\ldots,K\}$ where $h = {T}/{K}<1$ for some $K\in \mathbb{Z}^+$. The following corollary proves the convergence of the non-implementable Euler scheme.

\begin{corollary}
\label{coro:Conv-of-non-implem-Euler}
Let $(X^{x,\mu}_t)_{t\in[0,T]}$ be the solution to \eqref{eq:decoupledMVSDE}.
Then, the non-implementable Euler scheme $\{X^{x,\mu,h}_{t_{k}}\}_{t_k \in \pi^h}$ \eqref{eq:non-implementable-EM} satisfies
\begin{align*}
    \max_{k=0,\ldots, |\pi|-1}
    \bE \big[\, \sup_{t \in [t_k, t_{k+1}]} | X^{x,\mu}_t - X^{x,\mu,h}_{t_{k}}|^2\big] \leq C(1+|x|^2)h.
\end{align*}
\begin{proof}
The result follows by using Lemma \ref{lem:TimeRegularityofDecoupledCoefficients-Holder} to verify the conditions of Theorem in \cite[Chapter 3]{ZhangBook_2017} applied to the decoupled McKean--Vlasov SDE \eqref{eq:decoupledMVSDE}.
\end{proof}
\end{corollary}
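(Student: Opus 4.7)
The strategy is to reduce the claim to a classical strong-convergence statement for Euler--Maruyama applied to a standard SDE with time-inhomogeneous coefficients. Once $\mu=(\mu_t)_{t\in[0,T]}$ from \eqref{eq:MVSDE} is fixed, the decoupled equation \eqref{eq:decoupledMVSDE} reads
\[
    \mathrm{d}X_t^{x,\mu}=\tilde b(t,X_t^{x,\mu})\,\mathrm{d}t+\tilde\sigma(t,X_t^{x,\mu})\,\mathrm{d}W_t,\qquad X_0^{x,\mu}=x,
\]
where $\tilde b(t,y):=b(t,y,\mu_t)$ and $\tilde\sigma(t,y):=\sigma(t,y,\mu_t)$. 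Assumption~\ref{assumption:b_sigma} gives uniform Lipschitz continuity of $\tilde b,\tilde\sigma$ in $y$, while Lemma~\ref{lem:TimeRegularityofDecoupledCoefficients-Holder} supplies uniform $\nicefrac12$-H\"older continuity in $t$. Hence the standard well-posedness and moment bound result (e.g.\ \cite[Chapter 3]{ZhangBook_2017}) provides a unique strong solution with $\bE\big[\sup_{t\in[0,T]}|X_t^{x,\mu}|^2\big]\leq C(1+|x|^2)$.

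The plan is to split the quantity of interest via the triangle inequality in the form
\[
    \sup_{t\in[t_k,t_{k+1}]}|X_t^{x,\mu}-X_{t_k}^{x,\mu,h}|^2
    \;\leq\;
    2\sup_{t\in[t_k,t_{k+1}]}|X_t^{x,\mu}-X_{t_k}^{x,\mu}|^2
    +2|X_{t_k}^{x,\mu}-X_{t_k}^{x,\mu,h}|^2,
\]
and then control the two pieces separately. For the first term I would use the SDE together with the linear growth of $\tilde b,\tilde\sigma$ (a consequence of the Lipschitz assumption plus the moment estimate above) and apply the Burkholder--Davis--Gundy inequality to bound
$\bE\big[\sup_{t\in[t_k,t_{k+1}]}|X_t^{x,\mu}-X_{t_k}^{x,\mu}|^2\big]\leq C(1+|x|^2)h$.
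For the second term I would invoke the classical strong error estimate for the Euler--Maruyama scheme applied to $(\tilde b,\tilde\sigma)$, giving
$\max_{k}\bE[|X_{t_k}^{x,\mu}-X_{t_k}^{x,\mu,h}|^2]\leq C(1+|x|^2)h$.
Combining the two bounds and taking the maximum over $k$ yields the claim.

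The main technical obstacle is that the stated bound is a supremum on each subinterval rather than just a grid-point error, so the Euler scheme values $X_{t_k}^{x,\mu,h}$ have to be compared to the continuous process over the whole subinterval. My preferred way around this is the one indicated above: keep the scheme evaluated at the left endpoint $t_k$, use the triangle inequality to introduce $X_{t_k}^{x,\mu}$, and then pay a single ``small-time increment'' factor $h$ via BDG applied to the stochastic integral $\int_{t_k}^{t}\tilde\sigma(s,X_s^{x,\mu})\,\mathrm{d}W_s$. An equivalent route would be to introduce the continuous-time Euler interpolant $\bar X_t^{x,\mu,h}$ and bound $\sup_{t\in[t_k,t_{k+1}]}|\bar X_t^{x,\mu,h}-X_{t_k}^{x,\mu,h}|$ by $C(1+|X_{t_k}^{x,\mu,h}|)\sqrt h$; either way the novelty compared with the classical theorem is purely cosmetic, since the $\nicefrac12$-H\"older-in-time, Lipschitz-in-space structure is preserved by the fixed-$\mu$ reduction.
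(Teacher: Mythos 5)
Your proposal is correct and follows essentially the same route as the paper: fix $\mu$, use Lemma~\ref{lem:TimeRegularityofDecoupledCoefficients-Holder} to get Lipschitz-in-space, $\nicefrac12$-H\"older-in-time coefficients, and invoke the classical Euler--Maruyama strong convergence theory from \cite[Chapter 3]{ZhangBook_2017}. The paper simply cites that theorem in one line, whereas you additionally spell out the standard triangle-inequality/BDG handling of the supremum over each subinterval, which is a faithful expansion of the same argument.
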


\begin{proposition}
\label{prop_compare_num_scheme_decoupled}
Let Assumption \ref{assumption:b_sigma} hold.
Set $\{X^{x,\mu,h}_{t_{k}}\}_{t_k \in \pi^h}$ as the non-implementable Euler scheme approximation \eqref{eq:non-implementable-EM}.
Assume a computable family of probability measures, denoted $\{\hat \mu_{t_k}\}_{t_k\in\pi^{h}}$, over $\mathcal{Q}_2(\bR^d)$ or $\mathcal{Q}(\mathcal{Q}_2(\bR^d))$ (i.e., deterministic or random and adapted), is available as an approximation to $\{\mu_{t_k}\}_{t_k\in\pi^h}$ and introduce $\{X_{t_k}^{x,\hat{\mu},h}\}_{t_k \in \pi^h}$ as the implementable Euler scheme approximation given by \eqref{eq:implementable-EM}.
Then, for some positive constant $C_d$ dependent on $d,T, L_b$ and $L_{\sigma}$ we have if $\hat{\mu}$ is deterministic
\begin{align}\label{eq:bound_by_w}
\norm{ X^{x,\mu,h}_{t_{K}}-X_{t_K}^{x,\hat{\mu},h} }_{L^2(\Omega)}^2
\leq
   C_d
   \sup_{k \in \{0,1,2,...,K\}} W_2(\mu_{t_k},\hat{\mu}_{t_k})^2
\end{align}
or if $\hat{\mu}$ is random
\begin{align}\label{eq:bound_by_e_w}
\norm{ X^{x,\mu,h}_{t_{K}}-X_{t_K}^{x,\hat{\mu},h} }_{L^2(\Omega)}^2
\leq
   C_d
   \sup_{k \in \{0,1,2,...,K\}} \E[W_2(\mu_{t_k},\hat{\mu}_{t_k})^2].
\end{align}

\begin{proof}
Let $k\in\{0,\ldots,K-1\}$. Define the auxiliary quantity $e_{k+1}:=|X^{x,\mu,h}_{t_{k+1}}-X_{t_{k+1}}^{x,\hat{\mu},h}|$ with $e_{0}=0$.

Taking the difference between the two schemes in absolute value, squaring both sides, then taking expectation on both sides (and using the tower property), and then using the Lipschitz and H{\"o}lder conditions given by Assumption \ref{assumption:b_sigma} (and using $2ab<a^2 + b^2$) results in the following inequality
\begin{align*}
      \E[e_{k+1}^2]
      &
      \leq
      \E[e_k^2] + 2 L_b^2 \E[e_k^2](h)^2 + 2 L_b^2 W_2(\mu_{t_k},\hat{\mu}_{t_k})^2 (h)^2 + 2 L_\sigma^2 \E[e_k^2] h + 2 L_{\sigma}^2 W_2(\mu_{t_k},\hat{\mu}_{t_k})^2 h \\ & + 2 L_b \E[e_k^2] h + L_b^2 \E[e_k^2] h + W_2(\mu_{t_k},\hat{\mu}_{t_k})^2 h
      \\ & \leq \E[e_k^2] \qty(1 + 2 L_b^2 h^2 + 2 L_\sigma^2 h + 2 L_b h + L_b^2 h ) + W_2(\mu_{t_k},\hat{\mu}_{t_k})^2 \qty(2 L_b^2 h^2 + 2 L_\sigma^2 h+ h)
       \\ & \leq \E[e_k^2] \qty(1 + a h + d h^2 ) + w_k,
    \end{align*}
where $a = 2L_\sigma^2 + 2 L_b + L_b^2$, $d= 2 L_b^2$ and $w_k=W_2(\mu_{t_k},\hat{\mu}_{t_k})^2 \qty(2 L_b^2 h^2 + 2 L_\sigma^2 h+ h)$. Now let $A=1 + a h + d h^2 $, by recursion we have
\begin{align*}
\E[e_{k}^2] &
      \leq A^k \E[e_{0}^2] + A^{k-1} w_0 + A^{k-2} w_1 + ... + A^2 w_{k-3} + A w_{k-2} + w_{k-1}.
\end{align*}
Let $w:= \sup_{k \in \{0,1,2,...,K\}} W_2(\mu_{t_k},\hat{\mu}_{t_k})^2\qty(2 L_b^2 h^2 + 2 L_\sigma^2 h+ h)$. Using that $\E[e_0^2]=\E[|\hat{X}_{0}-X_{0}^{x,\hat{\mu},h}|^2]=\E[|x-x|^2]=0$ we can rewrite the expression above as
\begin{align*}
\E[e_{K}^2] &
\leq \frac{A^K - 1}{A-1} w.
\end{align*}
In order to bound $A^K$ (for all $K$) we use the inequality that for $x>0$, it holds that $(1+x)^r < e^{rx}$ for all $r \in \mathbb{Z}^+$. Consider
\begin{align*}
    A^K & = \Big(1 + a h + d h^2\Big)^K
    \\ & = \Big(1 + a \frac{T}{K} + d \frac{T^2}{K^2}\Big)^K
    \leq e^{aK \frac{T}{K} + dK \frac{T^2}{K^2}}
    \leq e^{aT + dT^2 \frac{1}{K}}
    \leq e^{aT+dT} \quad \text{since  } K > T.
\end{align*}
We need to also consider the quantity $\frac{w}{A-1}$ as $h \rightarrow 0$
\begin{align*}
    \frac{w}{A-1} &
    \leq \sup_{k \in \{0,1,2,...,K\}}W_2(\mu_{t_k},\hat{\mu}_{t_k})^2 \frac{2 L_b^2 + 2 L_\sigma^2 + 1 }{ 2 L_\sigma^2 + 2 L_b + L_b^2 }
     \quad \text{since  } 0<h<1.
\end{align*}
Therefore, putting everything together
\[\E[e_{K}^2] \leq e^{(2L_\sigma^2 + 2 L_b + 3 L_b^2)T} \qty(\frac{2 L_b^2 + 2 L_\sigma^2 + 1 }{ 2 L_\sigma^2 + 2 L_b + L_b^2 } ) \sup_{k \in \{0,1,2,...,K\}} W_2(\mu_{t_k},\hat{\mu}_{t_k})^2 \]
for all $K$.

The proof of \eqref{eq:bound_by_e_w} is a straightforward adaptation of the proof of \eqref{eq:bound_by_w} by taking expectation on both sides, with the expectation being taken over the randomness of $\hat{\mu}$.
\end{proof}
\end{proposition}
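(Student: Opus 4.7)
The plan is to introduce the step error $e_k := |X^{x,\mu,h}_{t_k}-X^{x,\hat\mu,h}_{t_k}|$, which satisfies $e_0=0$ since both schemes start from the same deterministic point $x$ and are driven by the same Brownian motion. Subtracting the two Euler recursions \eqref{eq:non-implementable-EM} and \eqref{eq:implementable-EM} and using the uniform Lipschitz property of $b$ and $\sigma$ from Assumption~\ref{assumption:b_sigma} gives, for each $k$,
\begin{align*}
e_{k+1} &\leq e_k + L_b\big(e_k + W_2(\mu_{t_k},\hat\mu_{t_k})\big)\,h + \big|\sigma(t_k,X^{x,\mu,h}_{t_k},\mu_{t_k})-\sigma(t_k,X^{x,\hat\mu,h}_{t_k},\hat\mu_{t_k})\big|\,|\Delta W_{k+1}|.
\end{align*}
The key point is that $\hat\mu_{t_k}$ is assumed adapted, so the stochastic increment is a centred martingale difference conditional on $\mathcal{F}_{t_k}$, which will allow Itô-isometry-type cancellations when squaring.

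Next, I would square the recursion and take the conditional expectation $\bE[\cdot\,|\,\mathcal{F}_{t_k}]$ before taking full expectations; the Brownian cross term vanishes and the diffusion square term contributes a factor $h$ (not $h^2$) via $\bE[|\Delta W_{k+1}|^2]=h$. Using $2ab\le \varepsilon a^2 + \varepsilon^{-1} b^2$ together with the Lipschitz bound on $\sigma$ then yields a discrete recursion of the form
\begin{equation*}
\bE[e_{k+1}^2] \;\leq\; (1 + Ah + Bh^2)\,\bE[e_k^2] \;+\; D\,h\;\sup_{j}W_2(\mu_{t_j},\hat\mu_{t_j})^2,
\end{equation*}
with $A,B,D$ depending only on $L_b,L_\sigma$. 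Iterating this over $K$ steps and using $(1+Ah+Bh^2)^K \le e^{(A+B)T}$ (since $Kh=T$) together with the telescoping geometric sum gives the stated bound with a constant $C_d$ of the form $\frac{D}{A+Bh}\,e^{(A+B)T}$, which is uniform in $K$ (hence in $h$).

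For the random $\hat\mu$ case I would run the same argument but take the outer expectation over the auxiliary probability space carrying $\hat\mu$; adaptedness of $\{\hat\mu_{t_k}\}$ preserves the martingale increment property, so the cross-term cancellation still works, and the only change is that $W_2(\mu_{t_k},\hat\mu_{t_k})^2$ is replaced by $\bE[W_2(\mu_{t_k},\hat\mu_{t_k})^2]$ inside the recursion. I expect the main technical obstacle to be the bookkeeping of the constants so that $C_d$ does not blow up as $h\to 0$ or $K\to\infty$: one has to be careful to keep the coefficient of $\bE[e_k^2]$ of the form $1+O(h)$ rather than $1+O(1)$, which is where using conditional expectation (so that $(\Delta W_{k+1})^2$ integrates to $h$) rather than $|\Delta W_{k+1}|$ (which integrates to $\sqrt{h}$) is essential. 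A minor subtlety is confirming that, because $K=T/h$ with $h<1$, the exponent $(A+B)T + BT^2/K$ stays bounded independent of $K$, which justifies absorbing everything into a single constant depending only on $d$, $T$, $L_b$ and $L_\sigma$.
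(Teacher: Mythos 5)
Your proposal is correct and follows essentially the same route as the paper's proof: subtract the two Euler recursions, square, use the tower property so the Brownian cross term vanishes and the diffusion increment contributes a factor $h$, derive the one-step recursion $\E[e_{k+1}^2]\le(1+O(h))\E[e_k^2]+O(h)\sup_j W_2(\mu_{t_j},\hat\mu_{t_j})^2$, and close with the discrete Gronwall iteration using $(1+ah+dh^2)^K\le e^{aT+dT}$ since $Kh=T$. The treatment of the random $\hat\mu$ case by taking the outer expectation over the auxiliary space also matches the paper.
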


In order to bound the quantity $\E\qty[ \sup_{t \in [0,T]\}} W_2(\mu_{t},\hat{\mu}_{t})^2]$ when $\hat{\mu}$ is as in \eqref{eq:mu_hat} we need the following results.

\begin{proposition}[Proposition 2.5 in \cite{chen2024IMA}]
	\label{Prop:Propagation of Chaos}
    Let $\bS^j([0,T])$ for $j \geq  1$, be the space of processes $\bR^d$ valued, $\mathcal{F}-$adapted $Z$, which satisfy $(\E[ \sup_{t \in [0,T]} |Z_t|^{j}])^{1/j} < \infty$.
    Let Assumption \ref{assumption:b_sigma} hold and take $\tilde{M}\in \bN$ from \eqref{eq:IPS}. Then, for any $p\geq 2$ there exists a unique solution $\{X^{m}\}_m$ to \eqref{eq:IPS} in $\bS^p([0,T])$.
Also, then, there exists a constant $C>0$ independent of $\tilde{M}$ (but depending on $T$ and $p$) such that
\begin{align}
\label{eq:poc result - MVSDE-2-Empirical-of-nIPS}
\sup_{t\in [0,T]} \E\big[W_2( \mu_t, \mu^{\tilde{M}}_t
  )^2\big]
\le C
    \begin{Bmatrix}
        \tilde{M}^{-1 / 2}, & d<4 \\
    \tilde{M}^{-1 / 2} \log \tilde{M}, & d=4 \\
    \tilde{M}^{\frac{-2}{d}}, & d>4
    \end{Bmatrix}
    := g(\tilde{M}),
\end{align}
where $\mu^{\tilde{M}}_t:= \frac{1}{\tilde{M}} \sum_{m=1}^{\tilde{M}} \delta_{X^m_t}$ is the empirical measure formed by the solutions $X^m$ of the IPS system~\eqref{eq:IPS}.
\end{proposition}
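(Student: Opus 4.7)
The plan is to combine standard SDE theory in $\mathbb{R}^{d\tilde M}$ with a synchronous coupling argument against independent copies of the McKean--Vlasov solution, and then to invoke the sharp Wasserstein law of large numbers for empirical measures due to Fournier--Guillin. The existence and moment statement is essentially classical and can be dispatched first: the IPS \eqref{eq:IPS} may be viewed as a single $\mathbb{R}^{d\tilde M}$-valued SDE whose drift and diffusion are globally Lipschitz, with a Lipschitz constant depending only on $L_b, L_\sigma$ (averaging over the empirical measure does not worsen the constant, since $W_2$ of two empirical measures with equal weights is bounded by the componentwise $\ell^2$ distance). Picard iteration then yields a unique solution in $\bS^p([0,T])$, and a BDG plus Gronwall argument applied to $|X^m_t|^p$, together with the $L^p$-integrability of $\xi^m$ from Assumption~\ref{assumption:b_sigma}, gives uniform moment bounds in both $m$ and $\tilde M$.

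For the quantitative propagation of chaos, I would construct on the same probability space $\tilde M$ independent copies $\tilde X^m$ of the McKean--Vlasov solution~\eqref{eq:MVSDE}, where each $\tilde X^m$ is driven by the same Brownian motion $W^m$ and starts from the same $\xi^m$ as the IPS particle $X^m$; by construction $\Law(\tilde X^m_t) = \mu_t$ and the $\tilde X^m$ are i.i.d. Set $\tilde \mu^{\tilde M}_t := \frac{1}{\tilde M}\sum_{m=1}^{\tilde M} \delta_{\tilde X^m_t}$. The key decomposition is
\begin{equation*}
    \E\big[W_2(\mu_t, \mu^{\tilde M}_t)^2\big]
    \leq 2\,\E\big[W_2(\mu_t, \tilde \mu^{\tilde M}_t)^2\big]
    + 2\,\E\big[W_2(\tilde \mu^{\tilde M}_t, \mu^{\tilde M}_t)^2\big].
\end{equation*}

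The first term is the Wasserstein distance between $\mu_t$ and its empirical measure from $\tilde M$ i.i.d.\ samples; using the uniform-in-time moment bounds on $\mu_t$ established above, Fournier--Guillin delivers exactly the dimension-dependent rates $\tilde M^{-1/2}$, $\tilde M^{-1/2}\log \tilde M$, $\tilde M^{-2/d}$ stated in $g(\tilde M)$. For the second term, couple $\tilde \mu^{\tilde M}_t$ and $\mu^{\tilde M}_t$ by matching particles of equal index so that
\begin{equation*}
    W_2(\tilde \mu^{\tilde M}_t, \mu^{\tilde M}_t)^2 \leq \frac{1}{\tilde M}\sum_{m=1}^{\tilde M} |X^m_t - \tilde X^m_t|^2.
\end{equation*}
Taking the difference of the SDEs for $X^m$ and $\tilde X^m$, squaring, and using the Lipschitz property of $b$ and $\sigma$ (together with It\^o's isometry) yields
\begin{equation*}
    \E\big[\sup_{s\leq t}|X^m_s - \tilde X^m_s|^2\big]
    \leq C \int_0^t \Big(\E\big[|X^m_r - \tilde X^m_r|^2\big] + \E\big[W_2(\mu^{\tilde M}_r, \mu_r)^2\big]\Big)\,\mathrm{d}r.
\end{equation*}
Exchangeability of the particles means all indices give the same estimate, so averaging over $m$, inserting the decomposition above, and applying Gronwall closes the loop and leaves only the Fournier--Guillin term on the right.

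The main obstacle is the Fournier--Guillin estimate itself, which is delicate: obtaining the sharp dimension-dependent rate requires controlling higher moments of $\mu_t$ uniformly on $[0,T]$ (which must be traced through the moment bounds for the McKean--Vlasov SDE established from Assumption~\ref{assumption:b_sigma}) and handling the logarithmic correction at the critical dimension $d=4$. The other subtle point is that the coupling-plus-Gronwall step creates a self-referential dependence on $\E[W_2(\mu^{\tilde M}_r, \mu_r)^2]$ which must be absorbed carefully so that the constant $C$ remains independent of $\tilde M$; this is why the coupling to i.i.d.\ copies (rather than a direct estimate of $W_2(\mu_t, \mu^{\tilde M}_t)$) is crucial.
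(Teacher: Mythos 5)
Your proposal is correct and is essentially the standard argument behind the cited result: the paper itself offers no proof of this proposition (it is imported verbatim as Proposition~2.5 of the reference), and the reference's proof is exactly the synchronous-coupling-plus-Fournier--Guillin scheme you describe. Your handling of the two delicate points is also right: the equal-weight coupling bound $W_2(\tilde\mu^{\tilde M}_t,\mu^{\tilde M}_t)^2\le \tilde M^{-1}\sum_m|X^m_t-\tilde X^m_t|^2$ both linearizes the interaction for the Lipschitz estimate and lets Gronwall absorb the self-referential $W_2(\mu^{\tilde M}_r,\mu_r)$ term, while the $L^p(\Omega,\cF_0)$ integrability of $\xi$ for all $p$ in Assumption~\ref{assumption:b_sigma} supplies the uniform-in-time higher moments of $\mu_t$ that the Fournier--Guillin rates (including the logarithmic correction at $d=4$) require.
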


\begin{theorem} [Mean-square convergence]
\label{theorem:mean_square_ips}
Let Assumption \ref{assumption:b_sigma} hold and choose $\tilde{h}$ sufficiently small.
Let $m\in\{ 1,2,...,\tilde{M}\}$, take $X^{m}$ to be the solution to \eqref{eq:IPS} and let $X^{m,\tilde{h}}$ be the continuous-time extension of the Euler numerical scheme for the \eqref{eq:IPS} with time step $\tilde{h}$.
Then for the Euler scheme's empirical distribution $\mu^{\tilde{M},\tilde{h}}$ and the IPS's empirical measure $\mu^{\tilde{M}}$ in \eqref{eq:IPS} there exists a constant $C>0$ independent of $\tilde{h},\tilde{M}$ (but depending on $T$) such that
\begin{align}
\label{eq:convergence theroem term 1--ForEmpiricalMeasures}
     \sup_{ t \in [0,T] }
  \bE\Big[  \big(W_2(\mu^{\tilde{M}}_t ,\mu_t^{\tilde{M},\tilde{h}}) \big)^2 \Big]
\leq
     \sup_{t \in [0,T]}
     \frac{1}{\tilde{M}} \sum_{m=1}^{\tilde{M}}
    \bE\big[\,  |X_{t}^{m}-X_{t}^{m,\tilde{h}} |^2 \big]
  &
  \le C \tilde{h}.
\end{align}
\begin{proof}
This follows from \cite[Theorem 2.9]{chen2024IMA} after the usual domination of the Wasserstein distance by the $L^2$-norm of the difference of processes.
\end{proof}
\end{theorem}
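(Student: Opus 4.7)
The proof naturally splits along the two inequalities stated, which I would handle separately: the first is a soft consequence of the coupling definition of $W_2$, while the second is a genuine Euler-type strong convergence estimate performed at the level of the interacting particle system.

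For the first inequality, observe that since both empirical measures $\mu_t^{\tilde M}$ and $\mu_t^{\tilde M,\tilde h}$ are built by averaging Dirac masses over the same particle index $m=1,\dots,\tilde M$, the measure
\[
\gamma_t := \frac{1}{\tilde M}\sum_{m=1}^{\tilde M} \delta_{(X_t^m,\, X_t^{m,\tilde h})}
\]
is a valid coupling of them. Substituting $\gamma_t$ into the infimum that defines $W_2^2$ yields, pointwise in $\omega$ and $t$, the estimate $W_2^2(\mu_t^{\tilde M},\mu_t^{\tilde M,\tilde h}) \leq \tilde M^{-1}\sum_m |X_t^m - X_t^{m,\tilde h}|^2$. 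Taking expectation and then supremum over $t \in [0,T]$ gives the first inequality immediately.

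For the second inequality, the plan is a standard Euler error analysis, but carried out on the averaged error $E_t := \tilde M^{-1}\sum_{m=1}^{\tilde M} \E\bigl[|X_t^m - X_t^{m,\tilde h}|^2\bigr]$ rather than on a single particle. Writing the difference of integral representations of $X_t^m$ and of the continuous-time Euler extension $X_t^{m,\tilde h}$, applying It\^o isometry, the Lipschitz bounds on $b$ and $\sigma$ in Assumption~\ref{assumption:b_sigma}, and Cauchy--Schwarz, one obtains an estimate for $\E[|X_t^m - X_t^{m,\tilde h}|^2]$ controlled by (i) terms of the form $\E[|X_s^m - X_{t_k}^m|^2]$, which are $O(\tilde h)$ by the $\nicefrac12$-H\"older time regularity of $X^m$ together with the finite-moment bounds coming from \cite[Proposition 2.5]{chen2024IMA}; and (ii) terms $W_2^2(\mu_s^{\tilde M},\mu_{t_k}^{\tilde M,\tilde h})$ arising from the Lipschitz-in-measure condition. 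The coupling trick of the first part dominates the latter by $E_s$ plus another $O(\tilde h)$ piece from the time regularity of $s\mapsto \mu_s^{\tilde M}$. Averaging over $m$ and invoking an integral Gronwall inequality on $E_t$ then yields $\sup_{t\in[0,T]} E_t \leq C \tilde h$.

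The main obstacle, and the reason the argument must run on $E_t$ rather than on a fixed particle, is exactly that each particle's error depends on all the others through the empirical measure in the drift and diffusion, so a particle-wise Gronwall loop cannot close. The key cancellation that unlocks the estimate is that the coupling bound on $W_2^2$ of the two empirical measures returns precisely the averaged quantity $E_s$, so the same averaging that introduces the difficulty also supplies its resolution, and the Gronwall constant is uniform in $\tilde M$. This last fact is the content of \cite[Theorem 2.9]{chen2024IMA}, from which the stated $C\tilde h$ bound follows.
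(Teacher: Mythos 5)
Your proposal is correct and follows essentially the same route as the paper: the first inequality is the standard domination of $W_2^2$ between the two empirical measures via the diagonal coupling $\frac{1}{\tilde M}\sum_m \delta_{(X_t^m, X_t^{m,\tilde h})}$, and the second is the strong convergence rate of the Euler scheme for the IPS, which the paper simply cites as \cite[Theorem 2.9]{chen2024IMA} and which you correctly identify as the source of the uniform-in-$\tilde M$ Gronwall constant. Your expanded sketch of the averaged-error Gronwall argument is a faithful account of how that cited result is proved, so there is no gap.
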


By combining the above two results, we have
\begin{align*}
\nonumber
\E\qty[ \sup_{t \in [0,T]\}} W_2(\mu_{t},\hat{\mu}_{t})^2]
 &= \E\qty[ \sup_{t \in [0,T]} W_2(\mu_{t},\mu^{\tilde{M},\tilde{h}}_{t})^2]\\
    &\leq \sup_{t \in [0,T] }
\Big\{
\E[W_2(\mu_{t},\mu^{\tilde{M}}_{t} )^2]  +  \E[W_2(\mu^{\tilde{M}}_{t}, \mu^{\tilde{M},\tilde{h}}_{t})^2]
\Big\}
\leq g(\tilde{M}) + C\tilde{h}.
\end{align*}

\end{document}